\documentclass{amsart}
\usepackage{graphicx}
\usepackage{amssymb}
\usepackage{amsmath}
\usepackage{amsthm}
\usepackage{amsaddr}
\usepackage{enumerate}
\usepackage{bbm}
\usepackage{caption}
\usepackage{subcaption}
\usepackage{calligra}
\usepackage{MnSymbol}
\usepackage{bbm}
\usepackage{yfonts}
\usepackage{upgreek}
\usepackage{amssymb} %blacksquare
\usepackage{adforn}
\usepackage{pgfornament}
\usepackage{enumerate}
\usepackage{verbatim}
\usepackage{collectbox}
\usepackage{xcolor}
\usepackage{MnSymbol}
\usetikzlibrary{decorations.pathreplacing,calc,intersections}

\DeclareMathAlphabet{\mathcalligra}{T1}{calligra}{m}{n}

\newtheorem{thm}{Theorem}[section]
\newtheorem{cor}[thm]{Corollary}

\newtheorem{prop}[thm]{Proposition}
\theoremstyle{definition}
\newtheorem{definition}[thm]{Definition}
\newtheorem{remark}[thm]{Remark}

\newtheorem*{defn*}{Definition}
\newtheorem*{rems*}{Remarks}
\newtheorem*{rem*}{Remark}

\usepackage[pagewise]{lineno}%\linenumbers

\numberwithin{equation}{section}

\begin{document}

\title[Polygons with Parallel Opposite Sides] {
Convex Polygons with Parallel Opposite Sides: Convergence, Reconstruction, and Isoperimetric Inequalities}

\author[I. Konicer, B. Murawski, B. Rogala, T. Wieczorek, and M. Zwierzy\'nski]{Izabella Konicer$^1$, Bart\l{}omiej Murawski$^2$, Bruno Rogala$^3$, Tomasz Wieczorek$^4$, and Micha\l{} Zwierzy\'nski$^{5, \pentagram}$}
\email{$^{1}$Izabella.Konicer.stud@pw.edu.pl, ORCID: 0009-0007-1046-3761}
\email{$^{2}$B.H.Murawski@gmail.com}
\email{$^{3}$Bruno.Rogala.stud@pw.edu.pl, ORCID: 0009-0009-8914-4644}
\email{$^{4}$Tomasz.Wieczorek.stud@pw.edu.pl, ORCID: 0009-0001-5682-4602}
\email{$^{5}$Michal.Zwierzynski@pw.edu.pl}
\email{ORCID: 0000-0002-9627-1563}

\address{Warsaw University of Technology\\
Faculty of Mathematics and Information Science\\
ul. Koszykowa 75\\
00-662 Warsaw, Poland}

\thanks{$^{\pentagram}$Corresponding author}

\subjclass[2020]{53A15, 52A40, 52A38, 52A10, 53A04}

\keywords{discrete differential geometry, isoperimetric inequality, Wigner caustic, centre symmetry set, area evolute}

%%%%%%%%%%%%%%%%%%%%%%%%%%%%%%%%%%%%
%%%%%%%%% COMMANDS %%%%%%%%%%%%%%%%%
%%%%%%%%%%%%%%%%%%%%%%%%%%%%%%%%%%%%

\newcommand{\Eq}{{\mathrm{E}}}
\newcommand{\E}{\mathbb{E}}
\newcommand{\M}{{M}}
\newcommand{\N}{{N}}
\newcommand{\Css}{{\mathrm{CSS}}}
\newcommand{\Gcs}{{\mathrm{GCS}}}
\newcommand{\C}{{C}}
\newcommand{\D}{{D}}
\newcommand{\ppos}{\mathrm{PPOS}}
\newcommand{\cppos}{\mathrm{CPPOS}}
\newcommand{\F}{\text{F}}
\newcommand{\EF}{\mathcal{E}}
\newcommand{\Cwms}{\mathrm{CWMS}}
\newcommand{\Sms}{\mathrm{SMS}}
\newcommand{\lcm}{\mathrm{lcm}}

%%%%%%%%%%%%%%%%%%%%%%%%%%%%%%%%%%%%%%%%%%%%%%
%%%%%%%%%%%%%%%%%%%%%%%%%%%%%%%%%%%%%%%%%%%%%%
%%%%%%%%% BEGIN %%%%%%%%%%%%%%%%%%%%%%%%%%%%%%
%%%%%%%%%%%%%%%%%%%%%%%%%%%%%%%%%%%%%%%%%%%%%%

\begin{abstract}
In this paper, we study affine $\lambda$-equidistants of convex polygons with parallel opposite sides ($\cppos$es), introduced by M. Craizer et al. \linebreak (\emph{Polygons with Parallel Opposite Sides}, Discrete \& Computational Geometry \textbf{50} (2013), 474--490), and relate them to affine $\lambda$-equidistants of smooth convex curves. We show how, and under which conditions, the original $\cppos$ can be reconstructed from its Wigner caustic and centre symmetry set.

We prove a discrete version of the improved isoperimetric inequality:
\begin{align*}
L(\mathcal{P})^2\geqslant 8n\tan\left(\frac{\pi}{2n}\right)\cdot\big(
A(\mathcal P)+2\left|A^\ast\left(\Eq_{0.5}(\mathcal P)\right)\right|\big),
\end{align*}
where $\mathcal{P}$ is a $\cppos$ with $2n$ vertices, $L(\mathcal{P})$ is its perimeter, $A(\mathcal{P})$ is the area enclosed by $\mathcal{P}$, and $A^\ast\left(\Eq_{0.5}(\mathcal P)\right)$ denotes the oriented area of the Wigner caustic of $\mathcal{P}$. Moreover, equality holds if and only if $\mathcal{P}$ is an equiangular $\cppos$ of constant width.

We also prove sharp area estimates for the oriented areas of the Wigner caustic and the centre symmetry set of a $\cppos$. More precisely, we show that the absolute value of the oriented area of the Wigner caustic is at most one quarter of the area of $\mathcal{P}$, while the absolute value of the oriented area of the centre symmetry set is at most the area of $\mathcal{P}$. Both bounds are sharp.
\end{abstract}

\maketitle

\section*{Statements and Declarations}
Competing Interests: The authors declare that they have no competing interests.

Funding: The authors did not receive support from any organization for the submitted work.

%%%%%%%%%%%%%%%%%%%%%%%%
%%%%% Introduction %%%%%%%%%%%%%
%%%%%%%%%%%%%%%%%%%%%%%%

\section{Introduction}

\noindent Discrete differential geometry is based on the idea that a discrete object should not merely approximate a smooth one, but should preserve its essential geometric structure. From this point of view, polygonal models are most useful when the fundamental constructions of smooth differential geometry admit natural, intrinsic, and computable discrete counterparts. This philosophy is particularly fruitful in affine differential geometry, where many classical objects are defined not by Euclidean metric quantities, but by affine-invariant constructions such as parallel tangents, chords, envelopes, and oriented areas (see \cite{Bobenko, bogosel, CraizerDisc, CraizerPPOS, kwong}, and the literature therein).

The aim of the present paper is to develop this point of view for affine \linebreak$\lambda$-equidistants of convex polygons with parallel opposite sides. These polygons, introduced in \cite{CraizerPPOS}, form a natural discrete analogue of smooth convex curves. Indeed, they may be obtained by sampling tangent lines to a convex curve in pairs of parallel directions, and many affine constructions known in the smooth case have direct polygonal counterparts. In this sense, $\cppos$es are not only combinatorial objects, but also a discrete affine-differential model of convex ovals.

For a smooth closed planar curve, affine $\lambda$-equidistants are defined by taking affine combinations
$\lambda a+(1-\lambda)b$
of pairs of points whose tangent lines are parallel. The special case $\lambda=0.5$ is the Wigner caustic, also known in some contexts as the area evolute and as the middle hedgehog. Closely related to it is the centre symmetry set, which may be regarded as an affine analogue of the centre of symmetry: for a~centrally symmetric oval, this set degenerates to a single point, while in general it becomes a singular curve measuring the failure of central symmetry. These objects have been studied from several complementary perspectives, including singularity theory, affine geometry, convex geometry, and isoperimetric inequalities; see for instance \cite{Berry, bogosel, CraizerDR2, Art, DFJSymmetryDefect, DomitrzRios, DomitrzRiosRuas, DJRR, DomitrzMR, D-Z3, GB, ZD_Wigner, GiblinHoltom, GiblinReeve2, GiblinZ2, Janeczko, JaneczkoJelonekRus, Dominika, S6, Zwierz1, ZwierzMix}, and the literature therein.

The polygonal case is interesting for at least two reasons. First, it provides a discrete model in which the constructions are elementary and explicitly computable: affine equidistants, the Wigner caustic, and the centre symmetry set are again polygonal objects determined by the combinatorics and affine geometry of the original polygon. Secondly, the discrete theory reflects the smooth one surprisingly well. Thus, one can prove certain results directly in the polygonal setting and then interpret them as discrete analogues, or even as approximating versions, of the corresponding theorems for smooth convex curves. This gives a useful bridge between affine differential geometry and discrete geometry.

In this paper we study this bridge in detail. We first recall and develop the basic geometry of $\cppos$es and their affine $\lambda$-equidistants. We show that the construction of polygonal approximations by tangent lines is compatible with taking affine equidistants: under natural assumptions, taking the $\lambda$-equidistant of a polygonal approximation gives the same polygonal object as approximating the $\lambda$-equidistant of the original oval. We also prove a Hausdorff convergence result, showing that, as the set of chosen directions becomes finer, the corresponding polygonal chains converge to the smooth affine equidistant.

We then analyze the singularities of polygonal equidistants. In the discrete setting, cusps are described by a simple condition involving consecutive edges, or equivalently, by the way the affine parameters of consecutive great diagonals cross the level $\lambda$. This leads to a parity result: for a generic $\lambda\neq 0.5$, the number of cusps of $\Eq_\lambda(\mathcal{P})$ is even, whereas the Wigner caustic $\Eq_{0.5}(\mathcal{P})$ has an odd number of cusps. This mirrors the behaviour known from the smooth theory and emphasizes that $\cppos$es retain the essential affine geometry of parallel pairs.

Another part of the paper is devoted to reconstruction. We investigate when a~$\cppos$ can be recovered from its Wigner caustic and centre symmetry set. Since these two singular sets encode information about the great diagonals and about the midpoints of opposite vertices, they carry a substantial amount of data about the original polygon. We present a reconstruction procedure and formulate conditions under which the reconstruction is possible. This shows that, in the polygonal affine setting, the Wigner caustic and the centre symmetry set are not merely auxiliary objects, but can be used as geometric fingerprints of the original polygon.

Finally, we prove isoperimetric-type results. Our main inequality is a discrete version of the improved isoperimetric inequality involving the oriented area of the Wigner caustic:
\begin{align*}
L(\mathcal{P})^2\geqslant
8n\tan\left(\frac{\pi}{2n}\right)
\cdot\big(
A(\mathcal{P})+2\left|A^\ast\left(E_{0.5}(\mathcal{P})\right)\right|
\big),
\end{align*}
where $\mathcal{P}$ is a $\cppos$ with $2n$ vertices. Equality holds precisely for equiangular $\cppos$es of constant width. We also obtain sharp estimates for the oriented areas of the Wigner caustic and the centre symmetry set in terms of the area enclosed by the initial polygon. These estimates provide discrete counterparts of known smooth phenomena and show that the singular affine sets associated with a $\cppos$ control a significant part of its global geometry.

The paper is organized as follows. In Section~2 we introduce $\cppos$es, their $\lambda$-equidistants, including the Wigner caustic, and the centre symmetry set, and we discuss the relation between the discrete and smooth constructions. Section~3 is devoted to reconstruction from the Wigner caustic and the centre symmetry set. In Section~4 we prove the discrete improved isoperimetric inequality and the sharp oriented area estimates.

%%%%%%%%%%%%%%%%%%%%%%%%%%%%%%%%%%%%%%%%%%%%%%%%%%%%%
%%%%%%%%%%%%%%%%%%%%%%%%%%%%%%%%%%%%%%%%%%%%%%%%%%%%%%
%%%%%%%%%%%%%%%%%%%%%%%%%%%%%%%%%%%%%%%%%%%%%%%%%%%%%%
\section{Convex Polygons with Parallel Opposite Sides}

\noindent We begin by introducing the class of convex polygons that constitutes the main object of this paper. In this section, we fix the notation, present the basic definitions, and describe the geometric structure of convex polygons with parallel opposite sides. These polygons provide a discrete counterpart of smooth convex curves and form the natural setting for the study of affine equidistants, the Wigner caustic, and the centre symmetry set. We also describe a construction of such polygons from a prescribed family of directions.

Throughout this paper, the vertices and edges of a polygon will be indexed periodically. More precisely, we write
$P_{i+2n}=P_i$ and 
$e_{i+2n}=e_i$
for every $i\in\mathbb{Z}$. Unless stated otherwise, all definitions, conditions, and identities involving the index $i$ are understood to hold for every $i\in\mathbb{Z}$.

\begin{definition}
\label{CPPOS definition}
Let $\mathcal{P}$ be a convex planar $2n$-gon for some integer $n\geqslant 3$. Denote its vertices by $P_i\in\mathbb{R}^2$, $i\in\mathbb{Z}$, and its edges as $e_i:=P_{i+1}-P_i$. 
If $\mathcal{P}$ satisfies the conditions:
\begin{enumerate}[(1)]
\item $e_i\|e_{i+n}$,
\item $\left[e_i,e_{i+1}\right]>0$,
\item $\left<e_i,e_{i+1}\right> >0$
\end{enumerate}
for all $i$, where $[\cdot,\cdot], \left<\cdot,\cdot\right>$ denote determinant product and scalar product, respectively, then we will call $\mathcal{P}$ a \textit{convex polygon with parallel opposite sides}, abbreviated as a $\cppos$.
\end{definition}

\begin{remark}
The definition used in \cite{CraizerPPOS} also permits $\cppos$es with acute interior angles. In the present paper, we restrict our attention to polygons whose interior angles are obtuse. Equivalently, the angle between each pair of consecutive oriented edge vectors $e_i$ and $e_{i+1}$ is acute, as expressed by condition~(3) in Definition \ref{CPPOS definition}.

This convention allows cusp singularities to be interpreted geometrically exactly as in Definition~\ref{Def:cusp}; see also Proposition~\ref{cusp_equivalence}. Namely, a cusp occurs when two consecutive oriented edges of the corresponding polygonal chain form an acute angle. Under the convention adopted in \cite{CraizerPPOS}, two edges meeting at an obtuse angle could also be classified as forming a cusp.
\end{remark}

Let
\[
d_i:=P_{i+n}-P_i
\]
denote the vector of the \(i\)-th great diagonal of \(\mathcal{P}\). Let \(D_i\) be the point of intersection of the two great diagonals \(P_iP_{i+n}\) and \(P_{i+1}P_{i+n+1}\), and let \(\lambda_i\) be the signed affine parameter defined by
\[
D_i-P_i=\lambda_i d_i.
\]
Thus, \(\lambda_i\) measures the signed fraction of the great diagonal from the vertex \(P_i\) to the intersection point \(D_i\). By the intercept theorem, \(\lambda_i\) also satisfies
\[
D_i-P_{i+1}=\lambda_i d_{i+1}.
\]
Clearly, \(D_{i+n}=D_i\), and consequently \(\lambda_{i+n}=1-\lambda_i\). Moreover, the homothety with center \(D_i\) which maps the side \(e_i\) to the opposite side \(e_{i+n}\) gives
\begin{equation}\label{eq:homothety}
e_{i+n}
=
-\frac{\lambda_{i+n}}{\lambda_i}e_i
=
\frac{\lambda_i-1}{\lambda_i}e_i.
\end{equation}

The reader may notice that the authors of \cite{CraizerPPOS} use a different indexing convention for these objects, namely \(e(i+\frac{1}{2})\), \(D(i+\frac{1}{2})\), and \(\lambda(i+\frac{1}{2})\). Although this convention may be more intuitive, keeping it here would make some of our formulae rather long and difficult to read. Therefore, we replace it by the shorter notation \(e_i\), \(D_i\), and \(\lambda_i\), respectively.

\begin{definition}\label{def:convex-equidistant}
Let $\mathcal{P}$ be a $\cppos$. For $\lambda\in\mathbb{R}$, let 
$\Eq_{\lambda}(\mathcal{P})$ denote the polygon whose vertices, in cyclic order, are
\[
P_i(\lambda):=P_i+\lambda d_i.
\]
The polygon $\Eq_{\lambda}(\mathcal{P})$ is called the \textit{affine $\lambda$-equidistant of $\mathcal{P}$}, or simply the \textit{$\lambda$-equidistant of $\mathcal{P}$}. 
Its edges will be denoted by
\[
e_i(\lambda):=P_{i+1}(\lambda)-P_i(\lambda).
\]
In particular, $\Eq_{0.5}(\mathcal{P})$ is called the \textit{Wigner caustic of $\mathcal{P}$}.
\end{definition}

\begin{definition}\label{def:csscppos}
Let $\mathcal{P}$ be a $\cppos$. Let the \textit{centre symmetry set} of $\mathcal{P}$, which we will denote as $\Css(\mathcal{P})$, be the polygon whose vertices are $D_i$.  \end{definition}

Since $P_{i+n}(0.5)=P_i(0.5)$ and $D_{i+n}=D_i$, both the Wigner caustic and the centre symmetry set are understood as closed polygonal chains with vertices indexed by $i=1,\ldots,n$, and hence are traversed only once.

We illustrate the centre symmetry set and the Wigner caustic of a hexagonal $\cppos$ in Figure \ref{Fig:PCSSWCfigure}.

\begin{figure}[h]
    \centering
    \includegraphics[scale=0.44]{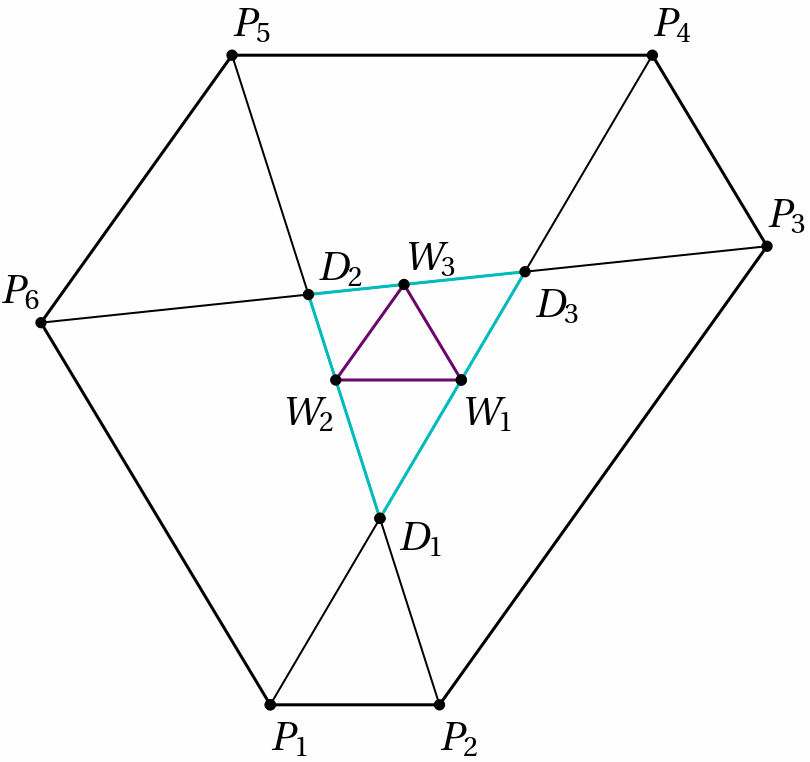}
    \caption{$\cppos$  $\mathcal{P}$ together with its Wigner caustic and centre symmetry set}
    \label{Fig:PCSSWCfigure}
\end{figure}

Before we proceed, let us define the smooth counterparts of the sets introduced above for polygons.

\begin{definition}
Let $M\subset\mathbb{R}^2$ be a smooth regular closed planar curve, that is, the image of a $C^{\infty}$ immersion $S^1\to\mathbb{R}^2$. Let $\lambda\in\mathbb{R}$. A pair of distinct points $a,b\in M$ is called a \textit{parallel pair} if the tangent lines to $M$ at $a$ and $b$ are parallel. The \textit{affine $\lambda$-equidistant} of $M$ is the set
\[
\Eq_{\lambda}(M):=
\big\{
\lambda a+(1-\lambda)b
\colon
\ a,b \text{ form a parallel pair of } M
\big\}.
\]
As in the case of $\cppos$es, the set $\Eq_{0.5}(M)$ is called the \textit{Wigner caustic} of $M$.
\end{definition}

Notice that the notation $\Eq_\lambda$ is now used in two contexts: for smooth curves and for $\cppos$es. Its meaning should always be clear from the argument.

The singular set now known as the Wigner caustic was introduced in the context of the semiclassical limit of Wigner's phase-space representation of a quantum state~\cite{Berry}. In this limit, the Wigner function associated with the classical correspondence $\mathcal C$ of a pure quantum state attains large values not only near $\mathcal{C}$, but also in a~neighbourhood of its Wigner caustic. Remarkably, the oriented area of the Wigner caustic of an oval yields a refinement of the classical isoperimetric inequality, as well as of several related isoperimetric inequalities \cite{Cufi, Zhang1, Zwierz1, Zwierz3, Zwierz2, ZwierzMix}. A broader construction, known as the middle hedgehog, provides a natural generalization of the Wigner caustic \cite{S6}. The Wigner caustic also appears in one of the two constructions of two-dimensional improper affine spheres; this construction extends to higher dimensions and is closely related to solutions of the Monge--Amp\`ere equation \cite{CraizerDR2, CraizerIas}. The singularities of the Wigner caustic and of affine equidistants are also widely studied in convex geometry, singularity theory and differential geometry, where they provide natural geometric models and play an important role in the classification and analysis of singular phenomena \cite{CraizerDR2, CraizerPPOS, Art, DFJSymmetryDefect, DomitrzRiosRuas, DJRR, DomitrzMR, D-Z3, GB, ZD_Wigner, GiblinReeve2, JaneczkoJelonekRus, Rochera1, SC1, Xie}.

\begin{remark}
In \cite{CraizerPPOS}, which provided the main inspiration for our study of $\cppos$es, the Wigner caustic is referred to as the Area Evolute.
\end{remark}

One can easily see that
\begin{equation} \label{equation1 krawedzie remark}
e_i(\lambda)=(1-\lambda)e_i+\lambda e_{i+n}
\end{equation}
and we can further derive
\begin{equation} \label{equation2 krawedzie remark}
e_i(\lambda) =\frac{\lambda_i-\lambda}{\lambda_i}e_i.
\end{equation}

\begin{definition}
Let $\mathcal{P}$ be a $\cppos$. A number $\lambda\in\mathbb{R}$ is called \textit{generic} for $\mathcal{P}$ if no edge of $\Eq_{\lambda}(\mathcal{P})$ degenerates to a point.
\end{definition}

\begin{remark}
A value $\lambda$ is non-generic precisely when two consecutive great diagonals intersect at a point that divides both of them in the affine ratio $\lambda:(1-\lambda)$. In this situation, the corresponding edge of the $\lambda$-equidistant degenerates to their point of intersection. Equivalently, $\lambda$ is non-generic if and only if $\lambda=\lambda_i$ for some~$i$. Since $\lambda_{i+n}=1-\lambda_i$, the value $\lambda$ is generic if and only if $1-\lambda$ is generic. Unless explicitly stated otherwise, all values of $\lambda$ considered in what follows are assumed to be generic.
\end{remark}

One possible construction of a $2n$-sided $\cppos$ proceeds as follows. We begin with a set $\nu\subset[0,\pi)$ consisting of $n$ distinct directions. Here, the direction of a~line is understood as the angle, considered modulo $\pi$, that the line makes with a~fixed reference direction. We then choose $2n$ lines whose directions are given by the elements of $\nu$, each occurring twice, and arrange them in increasing cyclic order of their directions. The vertices of the resulting polygon are defined as the intersection points of consecutive lines. In general, however, an arbitrary choice of the set $\nu$ and of the corresponding lines does not necessarily yield a $\cppos$.

\begin{definition}
Let
$\nu=\{\alpha_1,\ldots,\alpha_m\}\subset [0,\pi)$, where
$m\geqslant 3$
and 
$0\leqslant\alpha_1<\alpha_2<\cdots<\alpha_m<\pi.$
We say that $\nu$ is an \textit{allowable set of directions} if
$\alpha_i-\alpha_{i-1}\in\left(0,\frac{\pi}{2}\right)$
for
$i=2,\ldots,m,$
and
$\alpha_1+\pi-\alpha_m\in\left(0,\frac{\pi}{2}\right).$
\end{definition}

\begin{definition}
Let $M$ be a closed curve whose only singularities, if any, are cusp singularities. 
By a \textit{cusp} of a curve we mean a singular point at which the curve is locally diffeomorphic (in the source and in the target) to the standard cusp
$t\mapsto (t^2,t^3)$
at $t=0$. We assume that $M$ has no inflection points and that its rotation number is equal to either $1$ or $\frac{1}{2}$. Since $M$ has only cusp singularities, there exists a continuous normal vector field along $M$. In the case when the rotation number is $\frac{1}{2}$, this field is understood on the double cover of $M$.

Let $\nu$ be an allowable set of directions for $M$. We define $\ppos_\nu(M)$ to be the closed polygonal chain constructed as follows. Let $\gamma$ be a parametrization of $M$, and let
$s_1<s_2<\cdots<s_{2m}$
be the parameter values, listed in cyclic order, such that the tangent lines to $M$ at the points $\gamma(s_i)$ have directions belonging to $\nu$. Denote by~$T_i$ the tangent line to $M$ at $\gamma(s_i)$. The vertices of $\ppos_\nu(M)$ are defined by
\[
P_i:=T_i\cap T_{i+1},
\qquad i=1,\ldots,2m,
\]
where we use the cyclic convention $T_{2m+1}=T_1$. Thus, $\ppos_\nu(M)$ is the closed polygonal chain with consecutive vertices
\[
P_1,P_2,\ldots,P_{2m}.
\]
If the rotation number of $M$ is $\frac{1}{2}$, then, of course,
$P_i=P_{m+i}$
for $i=1,\ldots,m$.
\end{definition}

One should keep in mind that only the lines containing the sides of $\ppos_\nu(M)$ are required to be tangent to $M$. If $M$ has cusp singularities, two consecutive tangent lines may intersect before reaching their points of tangency -- see Figure~\ref{FigCPPOSNotReaching}.

\begin{figure}[h]
    \centering
    \includegraphics[width=0.3\linewidth]{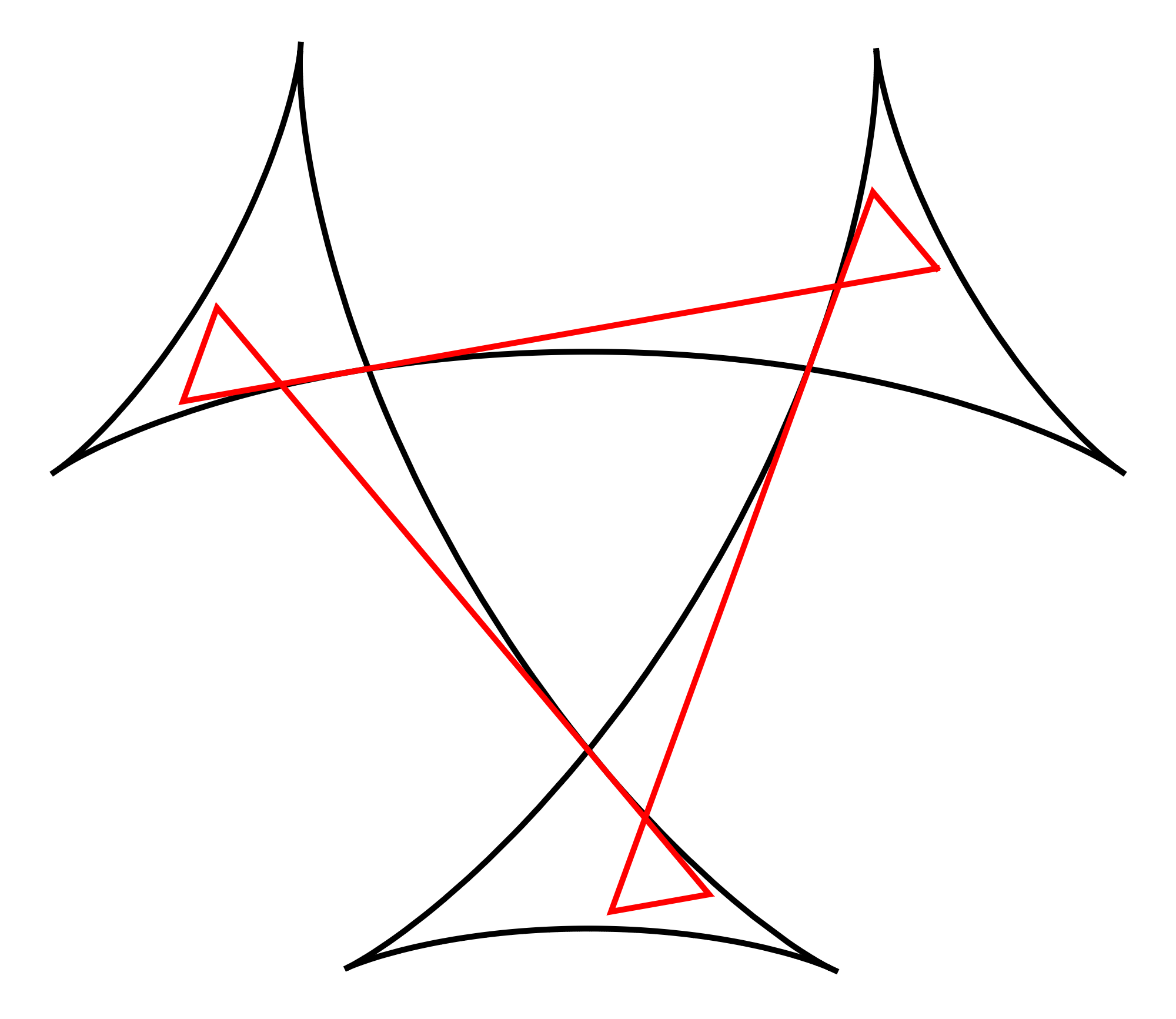}
    \includegraphics[width=0.3\linewidth]{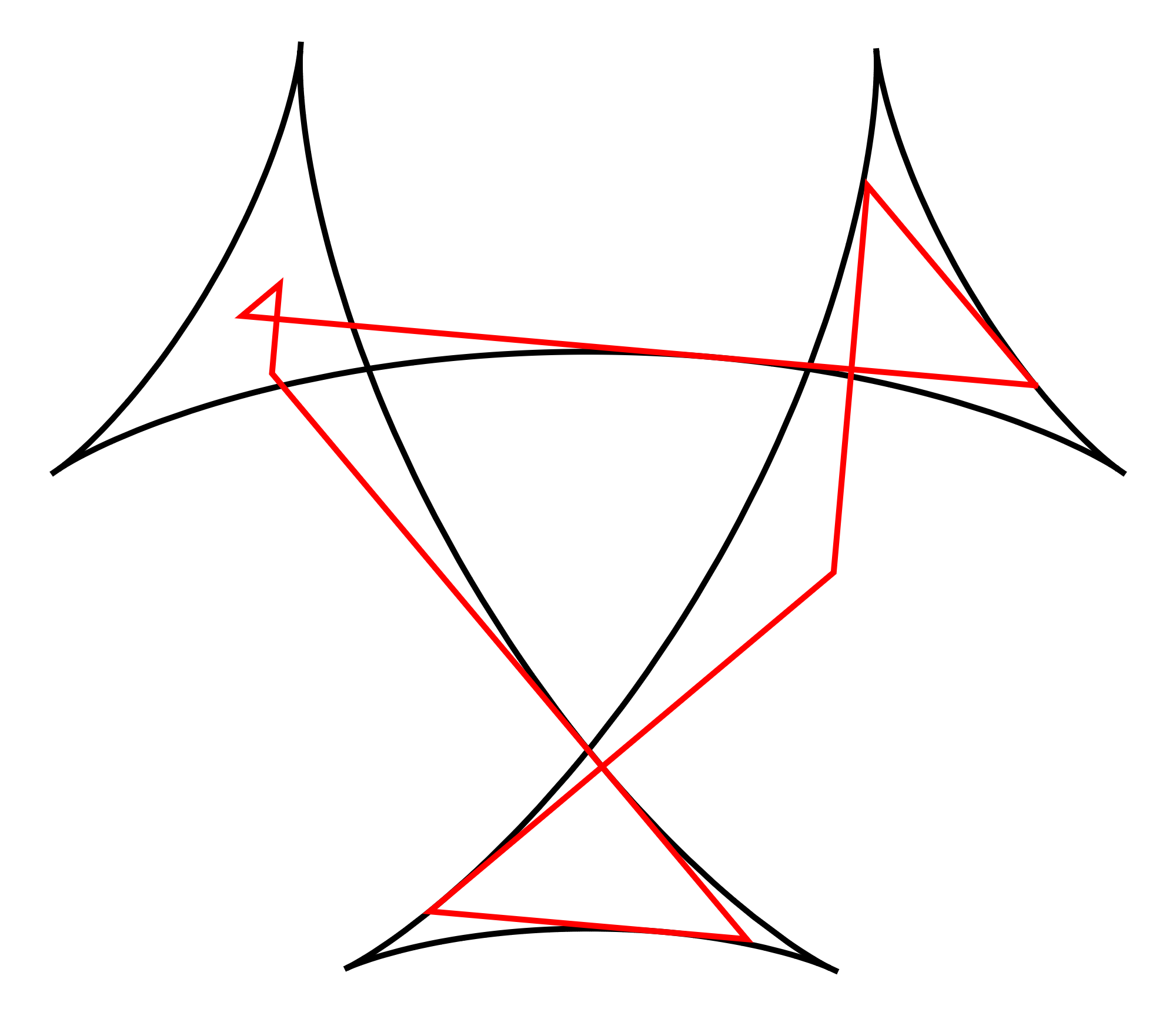}\\
    \includegraphics[width=0.3\linewidth]{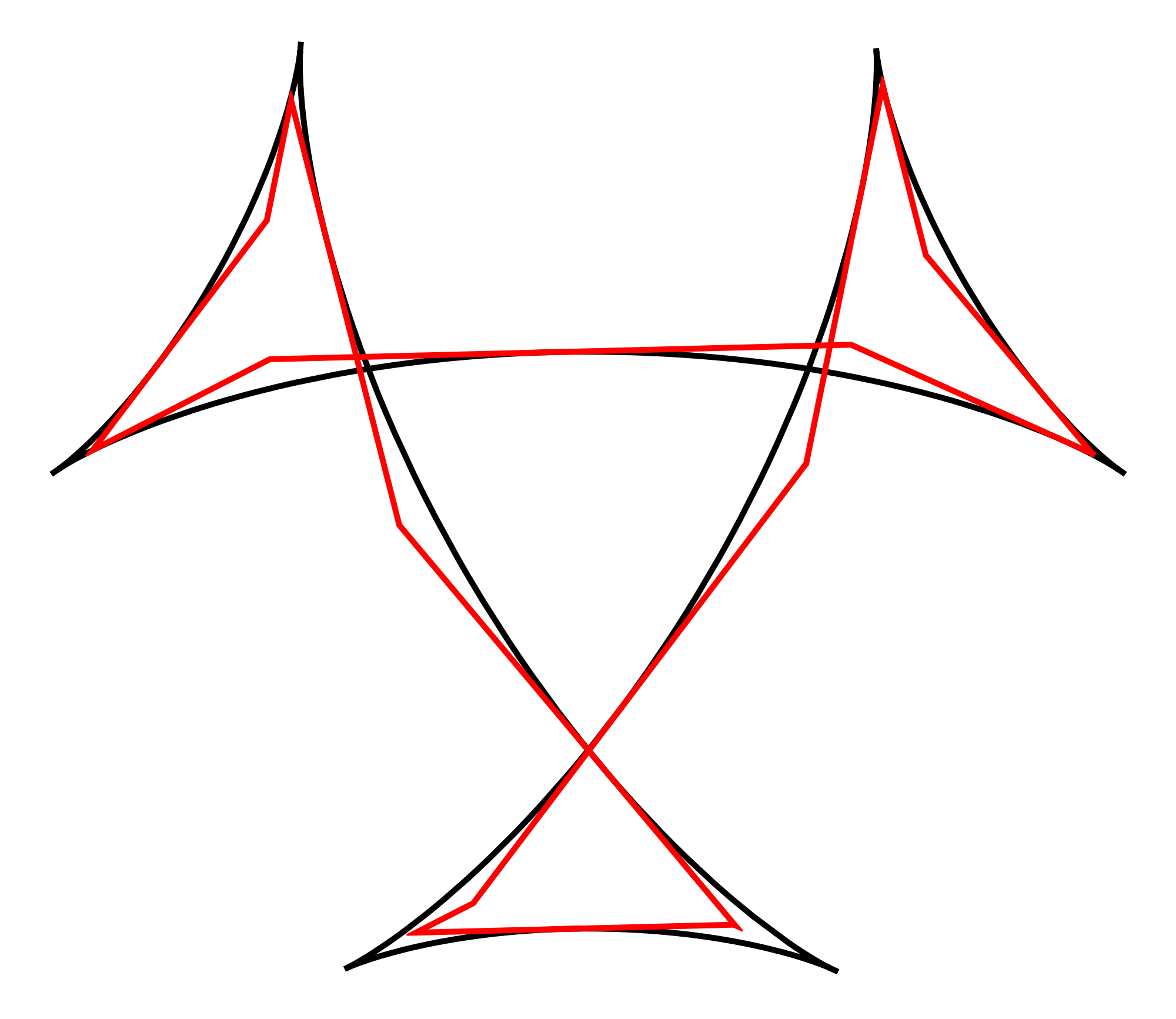}
    \includegraphics[width=0.3\linewidth]{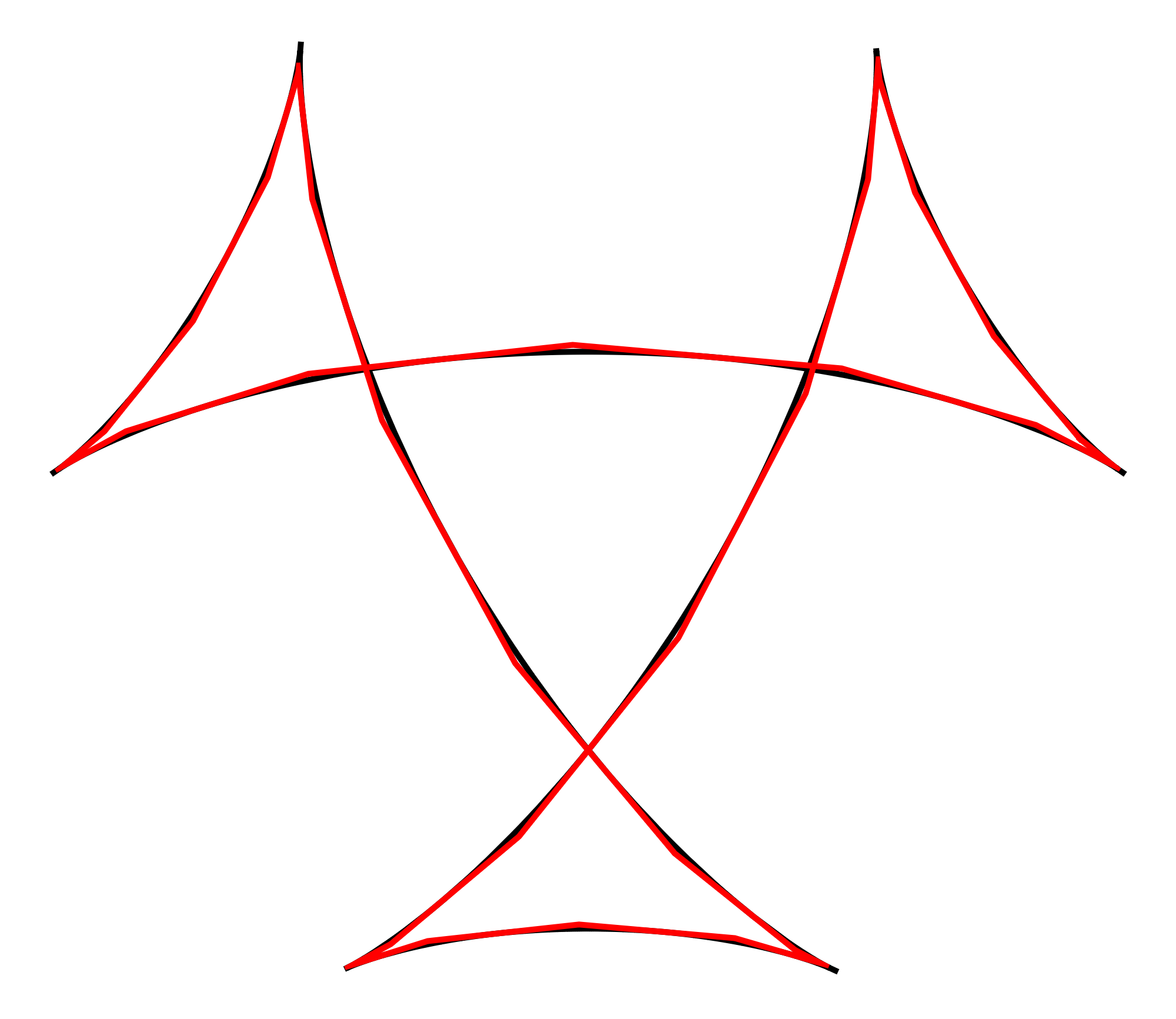}
    \caption{A curve $M$ with six cusp singularities together with $\ppos_\nu(M)$ for $\nu$ with $3$, $4$, $7$, and $16$ directions, respectively}
    \label{FigCPPOSNotReaching}
\end{figure}

\begin{remark}\label{remConstruction}
If $M$ is convex, then $\ppos_\nu(M)$ is a $\cppos$. Conversely, every $\cppos$ can be obtained in this way, for a suitable curve $M$ and a suitable allowable set of directions $\nu$.
\end{remark}

\begin{figure}[h]
    \centering
    \includegraphics[width=0.369\linewidth]{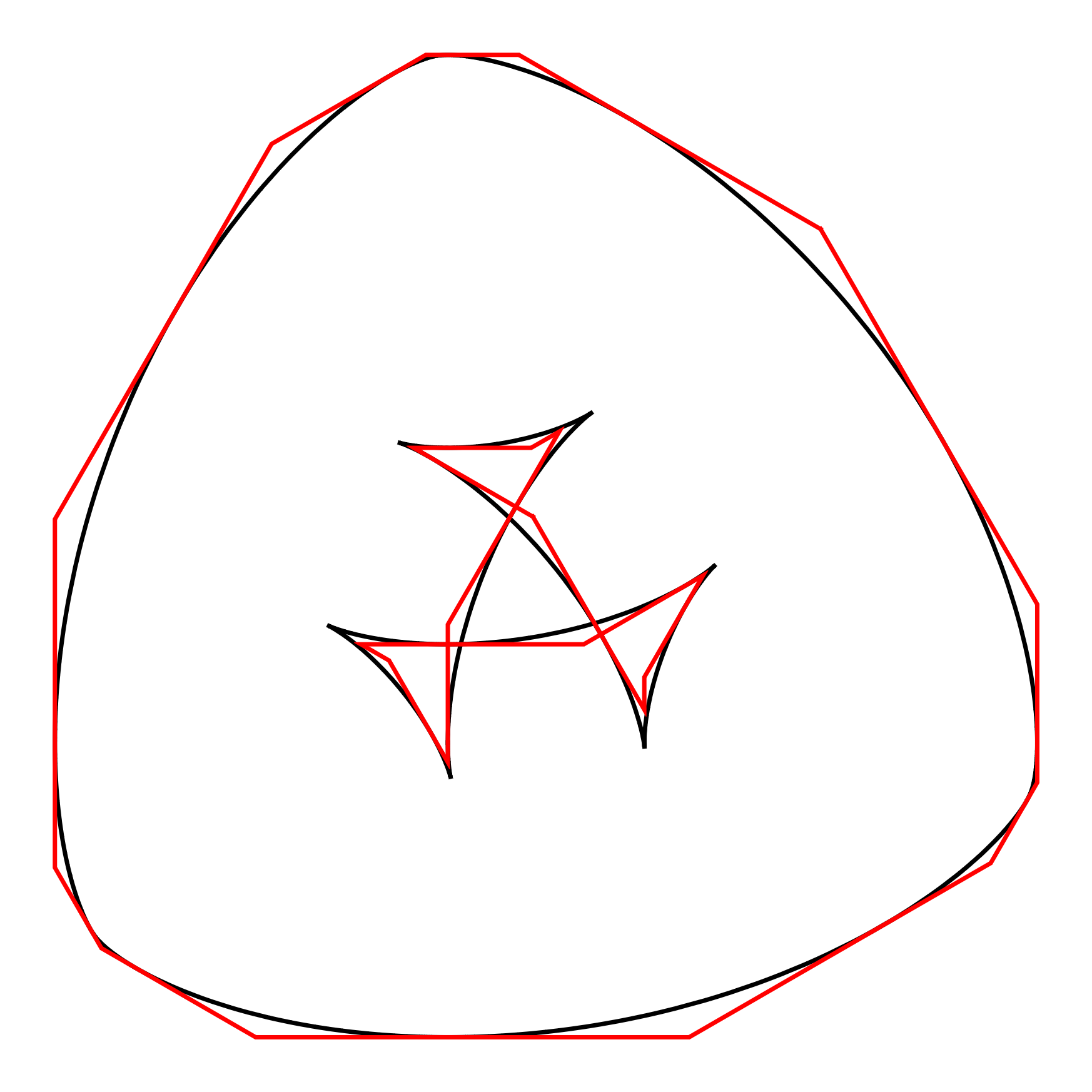}
    \includegraphics[width=0.369\linewidth]{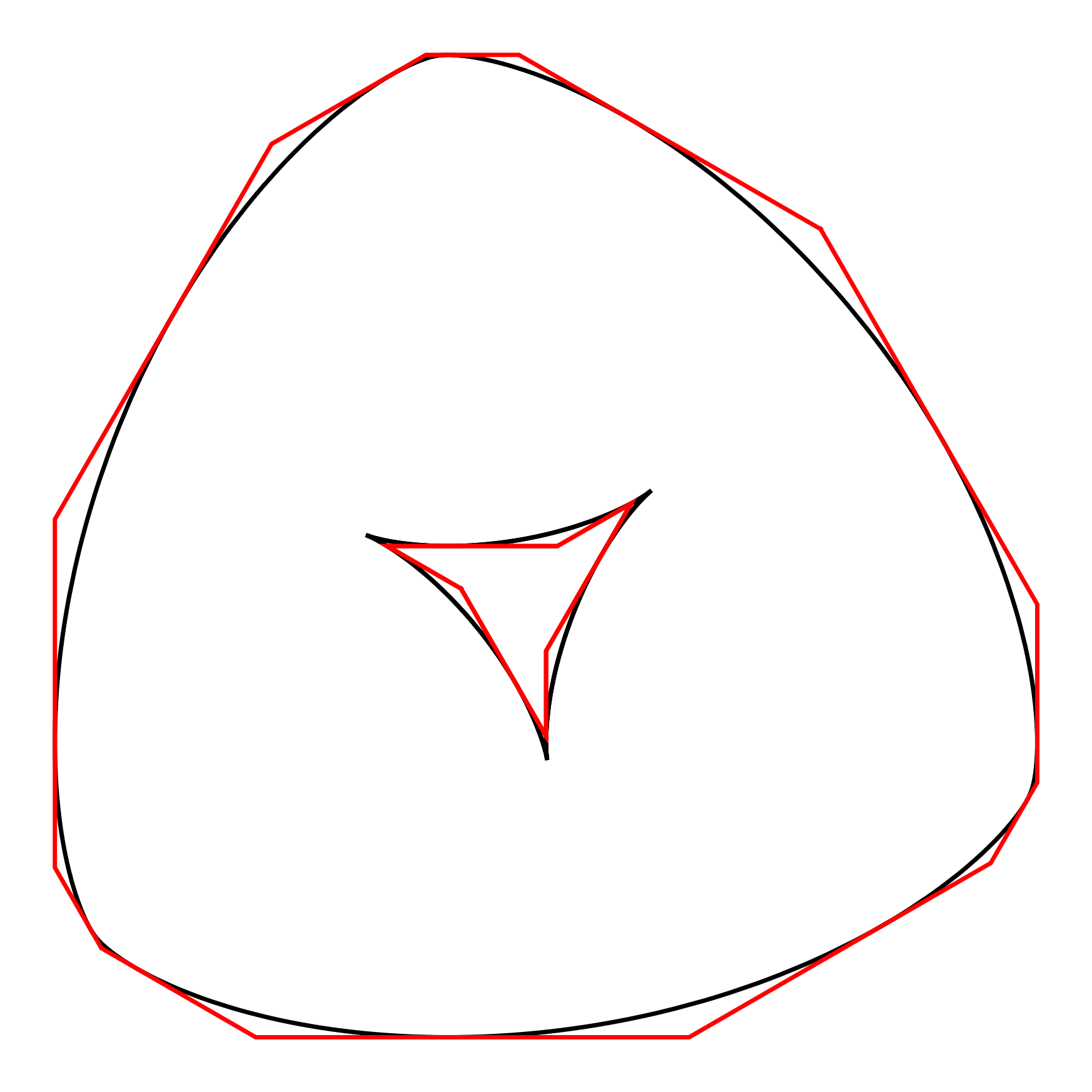}
    \caption{An oval $\mathcal{O}$, $\Eq_{\lambda}(\mathcal{O})$, $\ppos_\nu(\mathcal{O})$, and  $\ppos_\nu\left(\Eq_{\lambda}(\mathcal{O})\right)=\Eq_{\lambda}\left(\ppos_\nu(\mathcal{O})\right)$, where $\lambda=0.4$ and $\lambda=0.5$, respectively}
    \label{FigThmChanging}
\end{figure}

We shall call a closed planar curve an \textit{oval} if it is smooth, strictly convex, and has no self-intersections.

The following theorem is a key observation: constructing a polygonal chain from an affine equidistant of an oval is compatible with taking the discrete equidistant of the polygonal chain constructed from the original oval -- see Figure~\ref{FigThmChanging}.

\begin{thm}
Let $M$ be an oval, let $\nu$ be an allowable set of directions, and let~$\lambda\in\mathbb{R}$, such that the only singularities of $\Eq_\lambda(M)$ are ordinary cusps. Then
\[
\ppos_{\nu}\left(\Eq_{\lambda}(M)\right)
=
\Eq_{\lambda}\left(\ppos_{\nu}(M)\right).
\]
\end{thm}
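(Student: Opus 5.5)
Both sides are closed polygonal chains determined by $2m$ lines. The plan is to show that the tangent lines of $\Eq_\lambda(M)$ in the directions of $\nu$ coincide, in cyclic order, with the lines through consecutive vertices $P_i(\lambda)$, $P_{i+1}(\lambda)$ of $\Eq_\lambda(\ppos_\nu(M))$. Since consecutive vertices are intersections of consecutive lines, equality of the vertex sequences then follows.

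First I parametrize the oval by the direction of its tangent line. Let $\gamma(\theta)$, $\theta\in[0,2\pi)$, be the point of $M$ with outward normal $(\cos\theta,\sin\theta)$, which is possible because $M$ is strictly convex. The parallel pairs are exactly $\{\gamma(\theta),\gamma(\theta+\pi)\}$. Hence $\Eq_\lambda(M)$ is parametrized by
\[
\gamma_\lambda(\theta)=\lambda\gamma(\theta)+(1-\lambda)\gamma(\theta+\pi),
\]
taken for $\theta\in[0,2\pi)$, or over half the period when $\lambda=0.5$. Differentiating gives $\gamma_\lambda'(\theta)=\lambda\gamma'(\theta)+(1-\lambda)\gamma'(\theta+\pi)$. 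Both summands are parallel to the tangent direction $t(\theta)$ at $\gamma(\theta)$, so wherever $\gamma_\lambda'\neq 0$ the tangent line of $\Eq_\lambda(M)$ at $\gamma_\lambda(\theta)$ has the same direction as $T(\theta)$, the tangent line of $M$ at $\gamma(\theta)$.

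At cusps $\gamma_\lambda'$ vanishes. I will argue that the tangent line, as the limit of secant or tangent directions, is still the line through $\gamma_\lambda(\theta)$ in direction $t(\theta)$. This works because $\gamma_\lambda'(\theta)=f(\theta)t(\theta)$ with $f$ a scalar function, and at an ordinary cusp $f$ has a simple zero. Hence the parameter values of $\Eq_\lambda(M)$ with tangent direction in $\nu$ are exactly the $2m$ values $\theta_i$ used for $M$, in the same cyclic order. This uses the continuous normal field required in the definition of $\ppos_\nu$.

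The key identity concerns the tangent line $T_i^\lambda$ to $\Eq_\lambda(M)$ at $\gamma_\lambda(\theta_i)$. I want to show that $T_i^\lambda$ is the line $\lambda T_i+(1-\lambda)T_{i+m}$, meaning the line parallel to $T_i$ passing through $\lambda x+(1-\lambda)y$ for any $x\in T_i$ and $y\in T_{i+m}$. This holds because $T_i$ and $T_{i+m}$ are parallel, so this affine combination of the two lines is a well-defined parallel line, and it contains $\gamma_\lambda(\theta_i)$.

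Next comes the vertices. Let $Q_i=T_i\cap T_{i+1}$ be a vertex of $\ppos_\nu(M)$, with opposite vertex $Q_{i+m}=T_{i+m}\cap T_{i+m+1}$ and great diagonal $d_i=Q_{i+m}-Q_i$. The point
\[
Q_i+(1-\lambda)d_i=\lambda Q_i+(1-\lambda)Q_{i+m}
\]
lies on $\lambda T_i+(1-\lambda)T_{i+m}=T_i^\lambda$, and likewise on $T_{i+1}^\lambda$. Consecutive lines are not parallel, since $\nu$ is allowable, so this point equals $T_i^\lambda\cap T_{i+1}^\lambda$, the $i$-th vertex of $\ppos_\nu(\Eq_\lambda(M))$.

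This vertex is $Q_i+(1-\lambda)d_i$, whereas the definition of $\Eq_\lambda$ gives $P_i(\lambda)=Q_i+\lambda d_i$. I must therefore reconcile the convention. The point $Q_i+\lambda d_i$ equals $(1-\lambda)Q_i+\lambda Q_{i+m}$, which is the vertex obtained from the pair $(\theta_{i+m},\theta_i)$. Reindexing the smooth parametrization by $\theta\mapsto\theta+\pi$ turns the combination $\lambda a+(1-\lambda)b$ into the other one. Since the smooth $\Eq_\lambda(M)$ is defined as a set over unordered parallel pairs, it is invariant under this relabeling. So the two chains agree up to the cyclic shift $i\mapsto i+m$, which is the same closed polygonal chain.

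The main obstacle is the careful treatment of cusps and of the rotation number. I will need to check that at a cusp the limiting tangent line is still the line computed above, that no extra tangencies in directions of $\nu$ arise, and that the cyclic order of tangent lines along $\Eq_\lambda(M)$ matches the order $\theta_1<\dots<\theta_{2m}$. When $\lambda=0.5$ the rotation number is $\tfrac12$ and only $m$ distinct lines occur, consistent with $P_i=P_{i+m}$. I will use the hypothesis that the only singularities are ordinary cusps precisely for this step.
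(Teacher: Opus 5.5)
Your proof is correct and uses the same idea as the paper. Each tangent line of $\Eq_\lambda(M)$ in a direction of $\nu$ is parallel to the two tangent lines of $M$ at the corresponding parallel pair and passes through the appropriate affine combination of their points, so it coincides with a side-line of $\Eq_\lambda(\ppos_\nu(M))$; consecutive intersections, i.e.\ the vertices, then agree.

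There are two minor differences. You derive the parallel-tangent property directly from the support parametrization, whereas the paper cites Theorem~2.6 of \cite{ZD_Wigner}. You also make explicit the relabeling $i\mapsto i+m$ (equivalently $\lambda\leftrightarrow 1-\lambda$). The paper's proof passes over this when it matches the tangent line at $\lambda a+(1-\lambda)b$ with the side through $(1-\lambda)a+\lambda b$.
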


\begin{proof}
Let $a,b\in M$ be a parallel pair such that the tangent lines to $M$ at $a$ and $b$ have a direction belonging to $\nu$. By Theorem~2.6 in \cite{ZD_Wigner}, the line tangent to $\Eq_{\lambda}(M)$ at the point
$\lambda a+(1-\lambda)b$
is parallel to the tangent lines to $M$ at $a$ and $b$.

This tangent line contains one side of the polygonal chain
\[
\ppos_{\nu}\left(\Eq_{\lambda}(M)\right).
\]
On the other hand, the tangent lines to $M$ at $a$ and $b$ contain two corresponding sides of $\ppos_{\nu}(M)$. Hence, by the definition of the discrete equidistant, the corresponding side of
$\Eq_{\lambda}\left(\ppos_{\nu}(M)\right)$
is parallel to them as well.

Moreover, by the intercept theorem, this side also passes through the point
$(1-\lambda) a+\lambda b$. 
Thus, the two corresponding side-lines of
\[
\ppos_{\nu}\left(\Eq_{\lambda}(M)\right)
\quad\text{and}\quad
\Eq_{\lambda}\left(\ppos_{\nu}(M)\right)
\]
are parallel and pass through the same point. Therefore, they coincide.

Since this argument applies to every direction in $\nu$, the corresponding side-lines of both polygonal chains coincide. Consequently, their consecutive intersection points coincide as well. Hence, 
\[
\ppos_{\nu}\left(\Eq_{\lambda}(M)\right)
=
\Eq_{\lambda}\left(\ppos_{\nu}(M)\right),
\]
which ends the proof.
\end{proof}

\begin{definition}
For a finite set of directions
\[
\nu=\{\alpha_1,\ldots,\alpha_m\}\subset[0,\pi),
\qquad
0\leqslant\alpha_1<\cdots<\alpha_m<\pi,
\]
we define its \textit{angular mesh} by
\[
\operatorname{mesh}(\nu)
:=
\max\left\{
\alpha_2-\alpha_1,\ldots,\alpha_m-\alpha_{m-1},
\alpha_1+\pi-\alpha_m
\right\}.
\]
\end{definition}

\begin{thm}\label{thm:convergence}
Let $M$ be a generic oval, and let $\lambda\in\mathbb{R}$ be generic. If $(\nu_k)_{k\in\mathbb{N}}$ is a~sequence of allowable sets of directions for the affine equidistant $\Eq_{\lambda}(M)$ such that
\[
\operatorname{mesh}(\nu_k)\to 0
\quad\text{as}\quad k\to\infty,
\]
then
\[
\lim_{k\to\infty}
\ppos_{\nu_k}\left(\Eq_{\lambda}(M)\right)
=
\Eq_{\lambda}(M)
\]
in the Hausdorff metric.
\end{thm}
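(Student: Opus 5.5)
We will prove Hausdorff convergence by establishing two one-sided estimates: every point of $\ppos_{\nu_k}(\Eq_\lambda(M))$ is uniformly close to $\Eq_\lambda(M)$, and every point of $\Eq_\lambda(M)$ is uniformly close to $\ppos_{\nu_k}(\Eq_\lambda(M))$. Write $N:=\Eq_\lambda(M)$. Since $M$ and $\lambda$ are generic, $N$ is a closed front whose only singularities are finitely many ordinary cusps and which has no inflection points. By Theorem~2.6 in \cite{ZD_Wigner}, its tangent direction at $\lambda a+(1-\lambda)b$ is the common tangent direction of $M$ at the parallel pair $a,b$. Hence $N$ can be parametrized by the tangent angle $\theta$, running over $[0,2\pi)$ (rotation number $1$, for $\lambda\neq0.5$) or over $[0,\pi)$ (rotation number $\tfrac12$, the Wigner caustic). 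We will use this parametrization $\gamma(\theta)=\lambda a(\theta)+(1-\lambda)b(\theta+\pi)$, where $a(\theta)$ is the point of $M$ with tangent angle $\theta$. It is smooth on the whole circle, including at the cusps, because the support-function parametrization of $M$ is smooth. Moreover, $\gamma'(\theta)=\rho(\theta)\,(\cos\theta,\sin\theta)$ with a smooth, bounded radius-of-curvature function $\rho=\lambda\rho_M(\theta)-(1-\lambda)\rho_M(\theta+\pi)$, which changes sign exactly at the cusps.

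The key step is to locate the vertices. Let $\theta_i<\theta_{i+1}$ be consecutive directions of $\nu_k$ (lifted appropriately), so $\theta_{i+1}-\theta_i\leqslant\operatorname{mesh}(\nu_k)=:h_k$. The vertex $P_i$ is the intersection of the tangent lines $T_i,T_{i+1}$ at $\gamma(\theta_i),\gamma(\theta_{i+1})$. Writing $u(\theta)=(\cos\theta,\sin\theta)$ and $n(\theta)=u(\theta)^\perp$, the support-type function $p(\theta)=\langle\gamma(\theta),n(\theta)\rangle$ satisfies
\begin{align*}
P_i=\gamma(\theta_i)+t_i\,u(\theta_i),\qquad
t_i=\frac{\left[\gamma(\theta_{i+1})-\gamma(\theta_i),\,u(\theta_{i+1})\right]}{\left[u(\theta_i),u(\theta_{i+1})\right]}.
\end{align*}
Taylor expanding, the numerator equals $\int_{\theta_i}^{\theta_{i+1}}\rho(s)\,[u(s),u(\theta_{i+1})]\,ds$, so its absolute value is at most $\|\rho\|_\infty(\theta_{i+1}-\theta_i)\sin(\theta_{i+1}-\theta_i)$. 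The denominator is $\sin(\theta_{i+1}-\theta_i)$, which is positive and bounded away from zero relative to the step because the directions are allowable. Hence $|t_i|\leqslant\|\rho\|_\infty h_k$.

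Consequently, every vertex satisfies $|P_i-\gamma(\theta_i)|\leqslant Ch_k$ with $C=\|\rho\|_\infty$. The edge $P_{i-1}P_i$ lies on $T_i$ and joins two points within $Ch_k$ of $\gamma(\theta_i)$, so the whole edge lies within $Ch_k$ of $\gamma(\theta_i)\in N$. Conversely, any point $\gamma(\theta)$ with $\theta\in[\theta_i,\theta_{i+1}]$ satisfies $|\gamma(\theta)-\gamma(\theta_i)|\leqslant\|\rho\|_\infty h_k$. Therefore it lies within $2Ch_k$ of the vertex $P_i$. Both one-sided distances are thus $O(h_k)$, which tends to $0$.

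The main obstacle is justifying that the construction of $\ppos_{\nu_k}(N)$ uses exactly the tangent lines at $\gamma(\theta_i)$ in this cyclic order, even near cusps, where consecutive tangent lines meet "before" their tangency points (Figure~\ref{FigCPPOSNotReaching}). We handle this through the angle parametrization. Genericity ensures that each direction is attained exactly once per turn (twice, counted with the double cover, when the rotation number is $\tfrac12$), since the tangent angle is monotone in $\theta$ by construction. This matches the paper's definition of $\ppos_\nu$. The estimate $|t_i|=O(h_k)$ is insensitive to the sign of $\rho$, so cusps cause no difficulty once the smooth $\theta$-parametrization is in place. A secondary point is that the Hausdorff distance is taken between compact images, so the double-cover case needs no separate treatment.
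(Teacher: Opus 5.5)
Your proof is correct, and it takes a genuinely different route from the paper's.

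\textbf{The paper's route.} It argues locally. It cuts $\Eq_\lambda(M)$ into regular arcs written as graphs of $C^1$ functions with $|g_i'|\leqslant L$. On these arcs it bounds $|g_i(x)-h_i(x)|\leqslant 2L|x-x_0|$ by the mean value theorem. It then handles small neighbourhoods of the cusps separately, by an $\varepsilon$-argument that appeals to the local normal form of an ordinary cusp; this part is only sketched.

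\textbf{Your route.} You use the global tangent-angle parametrization $\gamma(\theta)=\lambda a(\theta)+(1-\lambda)a(\theta+\pi)$, which is smooth through the cusps with $\gamma'=\rho\,u$. The tangency parameters of $\ppos_{\nu_k}$ are then just the points of $\nu_k\cup(\nu_k+\pi)$ in increasing order. This follows directly from the parametrization; genericity is not needed for it. You then locate every vertex explicitly. Since allowable gaps are below $\pi/2$, one has $0\leqslant\sin(\theta_{i+1}-s)\leqslant\sin(\theta_{i+1}-\theta_i)$ for $s\in[\theta_i,\theta_{i+1}]$, and this is exactly what yields $|t_i|\leqslant\|\rho\|_\infty(\theta_{i+1}-\theta_i)$.

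\textbf{What your route buys.}
\begin{enumerate}
\item An explicit linear rate, $d_H\leqslant 2\|\rho\|_\infty\operatorname{mesh}(\nu_k)$.
\item A uniform treatment of cusps: the sign of $\rho$ never matters, so no separate cusp analysis is needed.
\item Independence from the genericity of $\lambda$. Only boundedness of $\rho$ is used, i.e.\ positive curvature of $M$, which is where genericity of the oval enters.
\item Uniform convergence of the cyclically parametrized chains with uniformly bounded lengths, since each edge on $T_i$ has length at most $\|\rho\|_\infty(\theta_{i+1}-\theta_{i-1})$. This is stronger than Hausdorff convergence of images and is the natural input for the paper's later remark on convergence of oriented areas.
\end{enumerate}
The paper's argument avoids the support-function parametrization, but it is only qualitative.

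\textbf{One small fix.} For the edge $[P_{i-1},P_i]\subset T_i$ you need $P_{i-1}$ close to $\gamma(\theta_i)$, not to $\gamma(\theta_{i-1})$. Either repeat the computation with $T_{i-1}$ and $T_i$ interchanged, which gives $P_{i-1}=\gamma(\theta_i)+s\,u(\theta_i)$ with $|s|\leqslant\|\rho\|_\infty(\theta_i-\theta_{i-1})$, or accept the constant $2\|\rho\|_\infty$ from the triangle inequality.
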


\begin{proof}
Since $M$ is generic, it is known that, for a generic value of $\lambda$, the affine equidistant $\Eq_{\lambda}(M)$ is a piecewise smooth curve whose only singularities are ordinary cusps -- for details and for the precise genericity assumptions, see \cite{ZD_Wigner} and the literature therein.

Remove from $\Eq_{\lambda}(M)$ all its cusp points. The remaining curve is the union of finitely many open smooth arcs. These arcs can be further divided into finitely many overlapping arcs which, in suitable local coordinate systems, are graphs of $C^1$ functions whose first derivatives are uniformly bounded by some constant $L>0$. On each compact regular subarc, the tangent direction is a local coordinate whose inverse is uniformly continuous. Hence, as $\operatorname{mesh}(\nu)\to0$, the distance between consecutive points of tangency tends uniformly to zero.

Let us focus on one such arc. It is represented as the graph of a function
\[
g_i\colon (a_i,b_i)\to\mathbb{R}.
\]
For a sufficiently small mesh, the corresponding part of the polygonal chain
\[
\ppos_{\nu_k}\left(\Eq_{\lambda}(M)\right)
\]
is represented by a piecewise linear function
\[
h_i\colon (a_i,b_i)\to\mathbb{R}
\]
whose segments are tangent to the graph of $g_i$. The first and the last segment may fail to be tangent at points belonging to $(a_i,b_i)$; however, by slightly shrinking the interval, we may assume that all these segments are tangent at points of the considered arc. This can be done in such a way that, for every fixed $\varepsilon>0$, all points of $\Eq_{\lambda}(M)$ and of the polygonal chain which are not represented by some $g_i$ or $h_i$ lie in an $\varepsilon$-neighbourhood of the cusp points. This follows from the local normal form of an ordinary cusp and the continuous dependence of the intersections of nearby tangent lines on their points of tangency.

Fix a set of directions $\nu$ satisfying the above assumptions and put
\[
\mathcal{P}:=\ppos_{\nu}\left(\Eq_{\lambda}(M)\right).
\]
Recall that 
\[
d_H\left(\mathcal{P},\Eq_\lambda(M)\right)
=
\max\left\{
\sup_{y\in \Eq_\lambda(M)}\inf_{z\in\mathcal{P}}\|y-z\|,\ 
\sup_{z\in\mathcal{P}}\inf_{y\in\Eq_\lambda(M)}\|y-z\|
\right\},
\]
where $d_H$ is the Hausdorff metric.

We split $\Eq_\lambda(M)$ and $\mathcal{P}$ into two parts:
\[
M_1
=
\{y\in \Eq_\lambda(M): y=(x,g_i(x)) \text{ for some } i \text{ and some } x\},
\ \ 
M_2=\Eq_\lambda(M)\setminus M_1,
\]
and
\[
\mathcal{P}_1
=
\{z\in \mathcal{P}: z=(x,h_i(x)) \text{ for some } i \text{ and some } x\},
\quad
\mathcal{P}_2=\mathcal{P}\setminus \mathcal{P}_1.
\]

Let $y\in M_1$, say
\[
y=(x,g_i(x)).
\]
Assume that $h_i(x)$ belongs to a segment tangent to the graph of $g_i$ at $x_0$. Then
\[
h_i(x)=g_i(x_0)+g_i'(x_0)(x-x_0).
\]
Hence, by the mean value theorem,
\[
\begin{aligned}
\inf_{z\in\mathcal{P}}\|y-z\|
&\leqslant |g_i(x)-h_i(x)| \\
&\leqslant |g_i(x)-g_i(x_0)|
   + |g_i'(x_0)|\,|x-x_0| \\
&= \bigl(|g_i'(c)|+|g_i'(x_0)|\bigr)|x-x_0| \\
&\leqslant 2L|x-x_0|
\end{aligned}
\]
for some $c$ between $x$ and $x_0$. Since the mesh of $\nu$ tends to $0$, the maximal distance between $x$ and the corresponding point of tangency $x_0$ tends to $0$. Therefore, 
\[
\sup_{y\in M_1}\inf_{z\in\mathcal{P}}\|y-z\|
\to 0.
\]

Now let $y\in M_2$. By construction, $y$ lies in an $\varepsilon$-neighbourhood of a cusp point. Hence, there exists $y_1\in M_1$ such that
\[
\|y-y_1\|\leqslant 2\varepsilon.
\]
Using the previous estimate, we get
\[
\inf_{z\in\mathcal{P}}\|y-z\|
\leqslant
\|y-y_1\|+\inf_{z\in\mathcal{P}}\|y_1-z\|
\leqslant
2\varepsilon+\inf_{z\in\mathcal{P}}\|y_1-z\|.
\]
Thus, 
\[
\sup_{y\in \Eq_\lambda(M)}
\inf_{z\in\mathcal{P}}\|y-z\|
\]
can be made arbitrarily small by first choosing $\varepsilon>0$ sufficiently small and then taking the mesh of $\nu$ sufficiently small.

The reverse estimate is analogous. Indeed, if $z\in\mathcal{P}_1$, say $z=(x,h_i(x))$, then choosing $y=(x,g_i(x))\in \Eq_\lambda(M)$ gives
\[
\inf_{y\in\Eq_\lambda(M)}\|z-y\|
\leqslant
|h_i(x)-g_i(x)|,
\]
which is estimated exactly as above. If $z\in\mathcal{P}_2$, then $z$ lies in an $\varepsilon$-neighbourhood of a cusp point, and the same argument applies.

Consequently,
\[
\sup_{z\in\mathcal{P}}
\inf_{y\in \Eq_\lambda(M)}\|z-y\|
\]
also tends to $0$ as $\operatorname{mesh}(\nu)\to 0$. Therefore, 
\[
d_H\bigl(\ppos_{\nu}(\Eq_\lambda(M)),\Eq_\lambda(M)\bigr)
\to 0,
\]
which proves the theorem.
\end{proof}

The above convergence theorem will allow us to transfer some properties from the polygonal setting to the smooth one. More precisely, certain statements which can be proved in an elementary way for $\cppos$es may be applied to the polygonal chains $\ppos_{\nu_k}(\Eq_{\lambda}(M))$ and then passed to the limit as $\operatorname{mesh}(\nu_k)\to 0$. In this way, we obtain limiting versions of these properties for affine equidistants of smooth curves.

\begin{definition}\label{Def:cusp}
Let $\mathcal{P}$ be a $\cppos$, and let $P_i(\lambda)$ be a vertex of $\Eq_{\lambda}(\mathcal{P})$. We say that $P_i(\lambda)$ is a \textit{cusp} formed by the adjacent edges $e_{i-1}(\lambda)$ and $e_i(\lambda)$ if
\[
\left\langle e_{i-1}(\lambda),e_i(\lambda)\right\rangle < 0.
\]

\end{definition}

\begin{prop}\label{cusp_equivalence}
Let $\mathcal{P}$ be a $\cppos$. For a vertex $P_i(\lambda)$ of $\Eq_{\lambda}(\mathcal{P})$, the following conditions are equivalent:
\begin{enumerate}[(a)]
\item The edges $e_{i-1}(\lambda)$ and $e_i(\lambda)$ form a cusp.
\item $\left[e_{i-1}(\lambda),e_i(\lambda)\right] < 0$.
\item 
$\left(\lambda_{i-1}-\lambda\right)
\left(\lambda_i-\lambda\right)<0.$
\item %The angle between the two edges that meet at $P_i(\lambda)$ is acute. 
The unoriented angle between the two adjacent edges at their common vertex $P_i(\lambda)$, that is, the angle between $-e_{i-1}(\lambda)$ and $e_i(\lambda)$, is acute.
\end{enumerate}
\end{prop}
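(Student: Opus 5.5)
The plan is to reduce everything to the formula \eqref{equation2 krawedzie remark},
\[
e_i(\lambda)=\frac{\lambda_i-\lambda}{\lambda_i}\,e_i ,
\]
which says that each edge of $\Eq_\lambda(\mathcal P)$ is a scalar multiple of the corresponding edge of $\mathcal P$. Put $c_i:=(\lambda_i-\lambda)/\lambda_i$. Then
\[
\left\langle e_{i-1}(\lambda),e_i(\lambda)\right\rangle=c_{i-1}c_i\left\langle e_{i-1},e_i\right\rangle,
\qquad
\left[e_{i-1}(\lambda),e_i(\lambda)\right]=c_{i-1}c_i\left[e_{i-1},e_i\right].
\]
By conditions (2) and (3) of Definition~\ref{CPPOS definition}, both $\langle e_{i-1},e_i\rangle$ and $[e_{i-1},e_i]$ are positive. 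Genericity of $\lambda$ gives $c_{i-1}c_i\neq0$. Hence (a) and (b) are each equivalent to $c_{i-1}c_i<0$.

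Next I show that $c_{i-1}c_i<0$ is equivalent to (c). This needs the sign of $\lambda_{i-1}\lambda_i$, and I claim that $\lambda_i>0$ for all $i$ (in fact $0<\lambda_i<1$). To prove it, observe that the great diagonals $P_iP_{i+n}$ and $P_{i+1}P_{i+n+1}$ of a convex polygon cross in the interior. Their endpoints alternate around the boundary: $P_i,P_{i+1},P_{i+n},P_{i+n+1}$ appear in this cyclic order. So the intersection $D_i$ lies strictly inside both segments, which gives $\lambda_i\in(0,1)$.

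An alternative route uses \eqref{eq:homothety}: $e_{i+n}=\frac{\lambda_i-1}{\lambda_i}e_i$, and $e_{i+n}$ points opposite to $e_i$ in a convex polygon with parallel opposite sides. Hence $(\lambda_i-1)/\lambda_i<0$, which again forces $0<\lambda_i<1$. Either way $\lambda_{i-1}\lambda_i>0$, so
\[
c_{i-1}c_i=\frac{(\lambda_{i-1}-\lambda)(\lambda_i-\lambda)}{\lambda_{i-1}\lambda_i}
\]
has the sign of $(\lambda_{i-1}-\lambda)(\lambda_i-\lambda)$. This proves (a)$\Leftrightarrow$(b)$\Leftrightarrow$(c).

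Finally, (d) is a reformulation of (a). The angle between $-e_{i-1}(\lambda)$ and $e_i(\lambda)$ is acute exactly when $\langle -e_{i-1}(\lambda),e_i(\lambda)\rangle>0$, that is, when $\langle e_{i-1}(\lambda),e_i(\lambda)\rangle<0$.

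The only step that needs real justification is the sign claim $\lambda_i\in(0,1)$. I will settle it by the opposite-orientation argument. In a convex $2n$-gon the edge vectors turn monotonically through total angle $2\pi$. Condition (2) gives positive turning at every vertex, so the $n$ turns from $e_i$ to $e_{i+n}$ sum to exactly $\pi$ by symmetry of the parallel structure. Hence $e_{i+n}$ is a negative multiple of $e_i$.
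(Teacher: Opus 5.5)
Your proof is correct and follows the paper's argument. Both write $e_j(\lambda)=\frac{\lambda_j-\lambda}{\lambda_j}e_j$, use $[e_{i-1},e_i]>0$, $\langle e_{i-1},e_i\rangle>0$ and $\lambda_{i-1},\lambda_i>0$ to see that (a) and (b) carry the sign of $(\lambda_{i-1}-\lambda)(\lambda_i-\lambda)$, and read (d) as $\langle -e_{i-1}(\lambda),e_i(\lambda)\rangle>0$. The only difference is that you justify $0<\lambda_i<1$, which the paper simply asserts. Your diagonal-crossing route already settles this; the turning-angle sketch would additionally need the line that the turning $S_i$ from $e_i$ to $e_{i+n}$ is a positive multiple of $\pi$ with $S_i+S_{i+n}=2\pi$, which forces $S_i=\pi$.
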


\begin{proof}
By \eqref{equation2 krawedzie remark}, we have
\[
e_j(\lambda)
=
\frac{\lambda_j-\lambda}{\lambda_j}e_j.
\]
Therefore, 
\[
\left[e_{i-1}(\lambda),e_i(\lambda)\right]
=
\frac{(\lambda_{i-1}-\lambda)(\lambda_i-\lambda)}
{\lambda_{i-1}\lambda_i}
\left[e_{i-1},e_i\right],
\]
and similarly
\[
\left\langle e_{i-1}(\lambda),e_i(\lambda)\right\rangle
=
\frac{(\lambda_{i-1}-\lambda)(\lambda_i-\lambda)}
{\lambda_{i-1}\lambda_i}
\left\langle e_{i-1},e_i\right\rangle.
\]
Since $\mathcal{P}$ is a $\cppos$, we have
\[
\lambda_{i-1}>0,
\qquad
\lambda_i>0,
\]
and, by the assumed geometry of $\cppos$es,
\[
\left[e_{i-1},e_i\right]>0,
\qquad
\left\langle e_{i-1},e_i\right\rangle>0.
\]
Hence, the signs of both
\[
\left[e_{i-1}(\lambda),e_i(\lambda)\right]
\quad\text{and}\quad
\left\langle e_{i-1}(\lambda),e_i(\lambda)\right\rangle
\]
are equal to the sign of
\[
(\lambda_{i-1}-\lambda)(\lambda_i-\lambda).
\]
This proves the equivalence of (a), (b), and (c).

Finally, condition (a) is equivalent to (d), because
\[
\left\langle e_{i-1}(\lambda),e_i(\lambda)\right\rangle<0
\]
means that the angle between the oriented vectors $e_{i-1}(\lambda)$ and $e_i(\lambda)$ is obtuse, and therefore the angle between the two edges that meet at $P_i(\lambda)$, namely the~unoriented angle between $-e_{i-1}(\lambda)$ and $e_i(\lambda)$, is acute.
\end{proof}

\begin{remark}
In \cite{CraizerPPOS} the definition of a cusp of a $\lambda$-equidistant is (c) from Proposition 2.15  for $\lambda=0.5$, and (b) otherwise.
\end{remark}

\begin{thm}
Let $\mathcal{P}$ be a $\cppos$. For a generic value $\lambda\neq 0.5$, the number of cusps of $\Eq_{\lambda}(\mathcal{P})$ is even. If $\lambda= 0.5$ is generic, then the number of cusps of $\Eq_{0.5}(\mathcal{P})$ is odd.
\end{thm}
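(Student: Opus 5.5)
By Proposition~\ref{cusp_equivalence}, the vertex $P_i(\lambda)$ is a cusp exactly when $(\lambda_{i-1}-\lambda)(\lambda_i-\lambda)<0$. In other words, it is a cusp exactly when the sign of $s_i:=\lambda_i-\lambda$ changes between consecutive indices $i-1$ and $i$. Genericity guarantees $s_i\neq 0$ for every $i$, so each $s_i$ has a well-defined sign. The plan is therefore to count sign changes of the periodic sequence $(s_i)$ around the appropriate cycle. The two cases differ only in which cycle is used: the full cycle $i=1,\ldots,2n$ for $\lambda\neq 0.5$, and the half cycle $i=1,\ldots,n$ for the Wigner caustic, which is traversed once as agreed after Definition~\ref{def:csscppos}.

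\emph{Case $\lambda\neq0.5$.} Here $\Eq_\lambda(\mathcal P)$ has $2n$ vertices $P_1(\lambda),\ldots,P_{2n}(\lambda)$, and they are pairwise distinct as vertices of the chain. The number of cusps equals the number of indices $i\in\{1,\ldots,2n\}$ with $\operatorname{sgn}s_{i-1}\neq\operatorname{sgn}s_i$. For any cyclic sequence of nonzero reals this count is even, because the product $\prod_{i=1}^{2n}\operatorname{sgn}(s_{i-1})\operatorname{sgn}(s_i)=\prod_i(\operatorname{sgn}s_i)^2=1$. This gives the first claim.

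\emph{Case $\lambda=0.5$.} By \eqref{eq:homothety} we have $\lambda_{i+n}=1-\lambda_i$, so $s_{i+n}=(1-\lambda_i)-\tfrac12=-s_i$. The Wigner caustic is the closed chain with vertices $P_1(0.5),\ldots,P_n(0.5)$, closing up via $P_{n+1}(0.5)=P_1(0.5)$, and its edges are $e_1(0.5),\ldots,e_n(0.5)$. Its cusps are the vertices $P_i(0.5)$, $i=1,\ldots,n$, formed by the edges $e_{i-1}(0.5)$ and $e_i(0.5)$. At $i=1$ the incoming edge $e_0(0.5)=e_n(0.5)$ is the correct closing edge, so the cusp count is the number of $i\in\{1,\ldots,n\}$ with $s_{i-1}s_i<0$, where $s_0=s_{2n}=-s_n$. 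Now multiply the signs over this half cycle:
\[
\prod_{i=1}^{n}\operatorname{sgn}(s_{i-1}s_i)=\operatorname{sgn}(s_0)\operatorname{sgn}(s_n)\prod_{i=1}^{n-1}(\operatorname{sgn}s_i)^2=\operatorname{sgn}(-s_n)\operatorname{sgn}(s_n)=-1 .
\]
So the number of negative factors is odd, which means the number of cusps is odd. This gives the second claim.

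The step I expect to need the most care is the bookkeeping in the Wigner caustic case. One must confirm that the closed chain traversed once really consists of exactly the $n$ vertex-edge incidences listed above. One must also check that the closing vertex $P_1(0.5)=P_{n+1}(0.5)$ is tested with the edge pair $(e_n(0.5),e_1(0.5))$, which is the same as the pair $(e_n(0.5),e_{n+1}(0.5))$ up to the sign reversal $e_{i+n}(0.5)=-e_i(0.5)$. This sign reversal, which follows from \eqref{equation1 krawedzie remark}, leaves the sign of the scalar product $\langle e_n(0.5),e_{n+1}(0.5)\rangle$ unchanged only if both edges are reversed consistently. So I would verify directly that the cusp condition at $P_{n+1}(0.5)$ computed in the $2n$-cycle agrees with the one computed in the $n$-cycle; both reduce to the sign of $s_n s_{n+1}=-s_n^2\cdot(\text{positive})$ after using $s_{n+1}=-s_1$ appropriately. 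This consistency check is exactly where the relation $s_{i+n}=-s_i$ enters, and it is what produces the odd count.
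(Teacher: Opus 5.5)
Your proof is correct. It follows a different route from the paper's main proof and coincides instead with the alternative argument recorded in the remark right after it, which the paper attributes to Craizer et al.\ for $\lambda=0.5$.

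The paper's proof is geometric. It transports the outward unit normal of $\mathcal P$ to the edges of $\Eq_\lambda(\mathcal P)$ via the edge homotheties, and sorts those edges into two types that switch exactly at cusps. For $\lambda\neq0.5$, a full traverse returns to the starting edge, so the number of switches is even. For the Wigner caustic, one traverse covers only half of $\mathcal P$, so the inherited normal comes back reversed and the type must have switched an odd number of times. That argument mirrors the smooth theory of cooriented cusps, which is the viewpoint behind passing to ovals, but it relies on pictorial claims about how edges bend at a cusp.

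Your version reduces everything, via Proposition~\ref{cusp_equivalence}(c), to sign changes of $s_i=\lambda_i-\lambda$. The product-of-signs identity then makes the parity statement fully rigorous. You also handle the closing vertex of the Wigner caustic explicitly, through $s_0=s_{2n}=-s_n$, which the paper's remark passes over quickly.

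Your final ``consistency check'' paragraph contains two false statements, though your actual computation does not use them.
\begin{itemize}
\item There is no sign reversal of Wigner caustic edges. Since $P_{i+n}(0.5)=P_i(0.5)$ for all $i$, you have $e_{i+n}(0.5)=e_i(0.5)$ exactly. In \eqref{equation2 krawedzie remark}, the sign change $\lambda_{i+n}-0.5=-(\lambda_i-0.5)$ is cancelled because $e_{i+n}$ is a negative multiple of $e_i$ by \eqref{eq:homothety}.
\item The test at $P_{n+1}(0.5)$ in the $2n$-cycle is therefore literally the test at $P_1(0.5)$ in the $n$-cycle. It reduces to the sign of $s_ns_{n+1}=-s_ns_1=s_0s_1$, not of ``$-s_n^2\cdot(\text{positive})$''. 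The latter would make that vertex a cusp for every $\mathcal P$.
\end{itemize}
Your displayed product already uses the correct factor $s_0s_1$ with $s_0=-s_n$, so the proof stands; just delete or correct that paragraph. Likewise, the claim that the $2n$ vertices of $\Eq_\lambda(\mathcal P)$ are pairwise distinct is unnecessary, since cusps are counted by index.
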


\begin{proof}
For a $\cppos$ $\mathcal{P}$, one can define a unique outward unit normal vector field along the edges. This field is constant on each edge and is undefined at the vertices. Since two edges sharing a vertex form an obtuse angle, the angle between their outward normal vectors is acute.

\begin{figure}[h]
    \centering
    \includegraphics[width=0.666
    \linewidth]{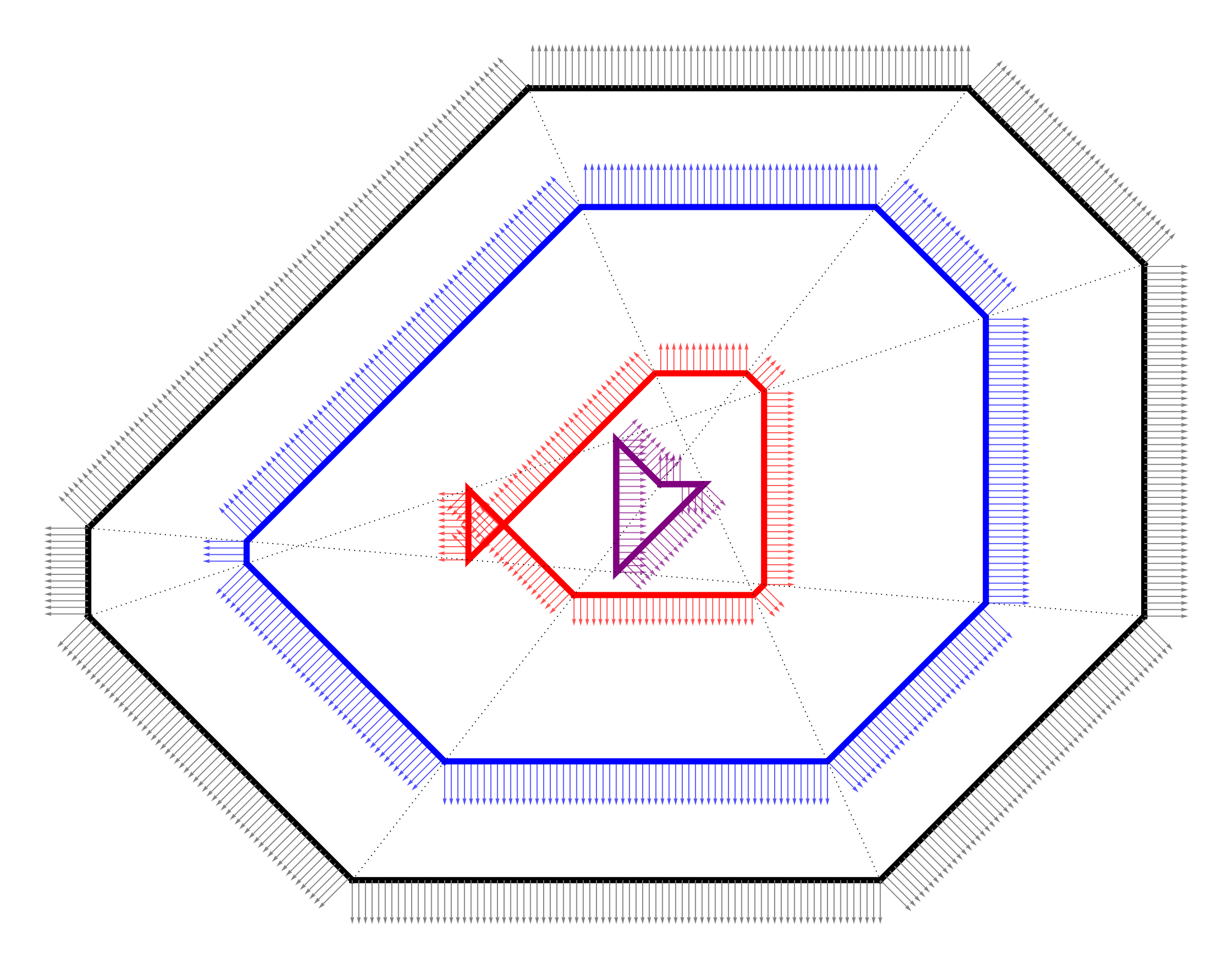}
    \caption{Normal vector fields along equidistants}
    \label{Fig:normalVectorFields}
\end{figure}

We transfer this normal field to the equidistant $\Eq_{\lambda}(\mathcal{P})$ by means of the natural correspondence between the edges of $\mathcal{P}$ and the edges of $\Eq_{\lambda}(\mathcal{P})$, given by the~homothety of each edge. Then, whenever a cusp is formed, the normal vector points inside the acute angle on one adjacent edge and outside it on the other (see Figure \ref{Fig:normalVectorFields}). The key observation is that, if a cusp vertex is surrounded by non-cusp vertices, then the adjacent edges bend away from it, and not towards it, just as in the cusp of a smooth curve.

Thus, when passing through consecutive cusps, the position of the normal vector alternates: if at one cusp it changes from pointing inside the acute angle to pointing outside it, then at the next cusp it changes in the opposite way, and vice versa. Moreover, when passing through a cusp, the orientation of the ordered pair consisting of the edge vector of the Wigner caustic, or more generally of an~affine equidistant, taken consistently with the orientation of the corresponding set, and the normal vector inherited from the original $\cppos$, is reversed -- this local behaviour is illustrated in Figure~\ref{Fig:normalVectors}.

\begin{figure}[h]
    \centering
    \includegraphics[width=0.6
    \linewidth]{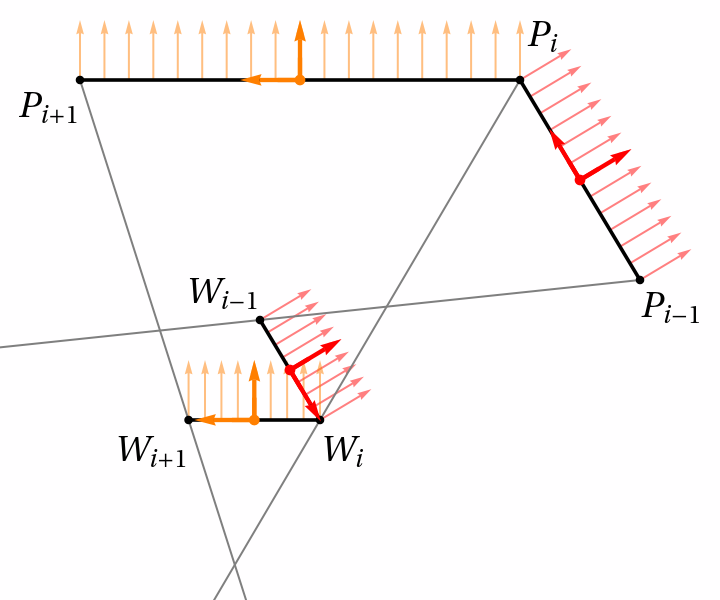}
    \caption{Normal vector fields along two sides of a $\cppos$ and its Wigner caustic}
    \label{Fig:normalVectors}
\end{figure}

Therefore, for $\lambda\neq 0.5$, we can divide the edges of $\Eq_{\lambda}(\mathcal{P})$ into two classes, say $A$ and $B$, as follows. We declare $e_1(\lambda)$ to be of type $A$, and then we say that $e_{i+1}(\lambda)$ is of the same type as $e_i(\lambda)$ if and only if the two edges do not form a cusp. Equivalently, the type changes exactly when we pass through a cusp.

Since $\Eq_{\lambda}(\mathcal{P})$ has $2n$ edges and after a full cycle we return to the initial edge, the type must also return to the initial one. Hence, the number of changes of type, that is, the number of cusps, must be even.

For $\lambda=0.5$, the same argument applies, but now $\Eq_{0.5}(\mathcal{P})$ has only $n$ distinct sides. Hence, after one full cycle on the Wigner caustic, we have made only half a cycle on the original polygon. Therefore, the corresponding normal direction is reversed (see Figure \ref{Fig:normalVectorFields}), and the type of the edge must change after one full cycle. Consequently, the number of type changes, and hence, the number of cusps, must be odd.
\end{proof}

\begin{remark}
The same result can be obtained from the alternative characterization given in condition~(c) of Proposition~\ref{cusp_equivalence}. Let $\lambda\neq 0.5$. If we consider the sequence $\left(\lambda_i\right)_{i\in\mathbb{Z}}$, then a cusp occurs every time this sequence crosses the value $\lambda$. Since $\lambda$ is generic, the sequence never takes the value $\lambda$. But the sequence is cyclic, and therefore, in order to return to its original value after $2n$ terms, it must cross the level $\lambda$ an even number of times.

For $\lambda=0.5$, the relation
\[
\lambda_{i+n}=1-\lambda_i
\]
implies that, after $n$ terms, the sign of $\lambda_i-0.5$ is reversed. Hence, the number of crossings of the level $0.5$ along one cycle of $\Eq_{0.5}(\mathcal{P})$ is odd. This method was used in \cite{CraizerPPOS} to prove the result for $\lambda=0.5$.
\end{remark}

\begin{definition}
Let $M$ be a smooth regular closed planar curve. For every parallel pair $(a,b)$, let $\ell_{a,b}$ denote the line passing through $a$ and $b$.
The \emph{centre symmetry set} of $M$, denoted by $\operatorname{CSS}(M)$, is the envelope of the family of lines
$$
\bigl\{\ell_{a,b}:\ (a,b)\text{ is a parallel pair of }M\bigr\}.
$$
Thus, $\operatorname{CSS}(M)$ is the envelope of the chords joining pairs of points of $M$ with parallel tangent lines.
\end{definition}

\begin{figure}[h!]
    \centering
    \includegraphics[width=0.555
    \linewidth]{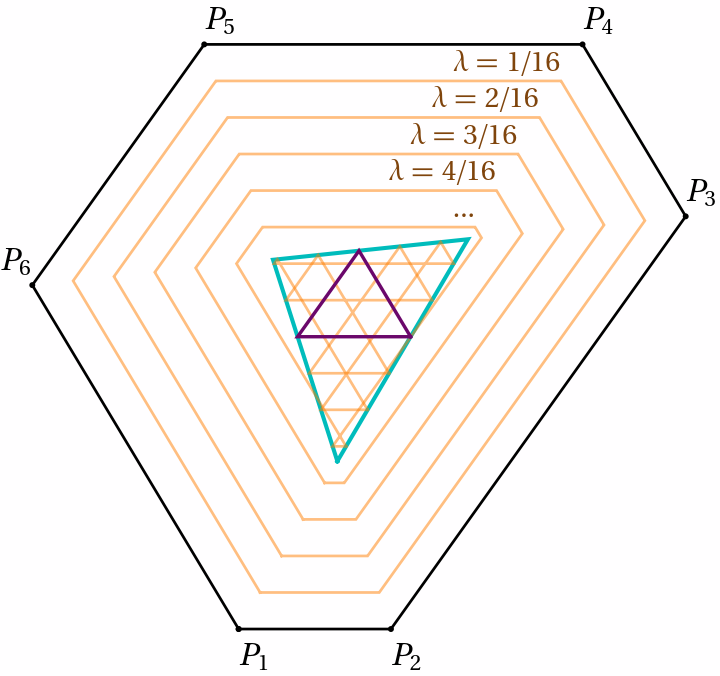}
    \caption{Family of $\lambda$-equidistants together with the centre symmetry set of a $\cppos$}
    \label{fig:LambasCss}
\end{figure}

The study of the centre symmetry set originates in the work of S. Janeczko~\cite{Janeczko}, who described it as the bifurcation set of a suitable family of ratio functions. This notion was subsequently reformulated by P. J. Giblin and P. A. Holtom in the geometric form adopted above \cite{GiblinHoltom}. Symmetry and its various generalizations play a fundamental role in the geometry of submanifolds of Euclidean spaces and in numerous related applications.
The centre symmetry set of a curve may also be characterized as the union of the singular points of its family of affine $\lambda$-equidistants. This interpretation has been extensively studied both in the smooth setting and for non-smooth objects, including polygonal curves. An analogous result concerning singularities was established by the authors of \cite{CraizerPPOS}. More precisely, they proved that the relative interior of every edge of the centre symmetry set of a $\cppos$ consists entirely of singular points of the family of affine $\lambda$-equidistants (see Proposition 3.1 in \cite{CraizerPPOS}). Thus, apart from the endpoints of its edges, the polygonal centre symmetry set is formed entirely by singular points arising within this family (see Figure \ref{fig:LambasCss}). The geometry of the centre symmetry set and its singular structure has attracted considerable attention, not only in the plane but also in higher-dimensional Euclidean spaces and in Minkowski geometry \cite{Art, DomitrzRios, GB, GiblinZ2, Dominika, ZwierzMix}.

\begin{remark}
The exact compatibility established in Theorem~2.12 is a feature peculiar to $\lambda$-equidistants. In particular, no analogous identity holds for the centre symmetry set. In general,
\begin{equation*}
\ppos_{\nu'}(\Css(M)) \neq \Css(\ppos_\nu(M)),
\end{equation*}
where $\nu$ and $\nu'$ are suitable allowable sets of directions for $M$ and $\Css(M)$, respectively. Figure~\ref{fig:csscppsnot} illustrates the typical situation: the tangent lines to the centre symmetry set of an oval $\mathcal{O}$ do not coincide with the lines containing the edges of $\Css(\ppos_\nu(\mathcal{O}))$.

\begin{figure}[h!]
    \centering
    \includegraphics[width=0.405
    \linewidth]{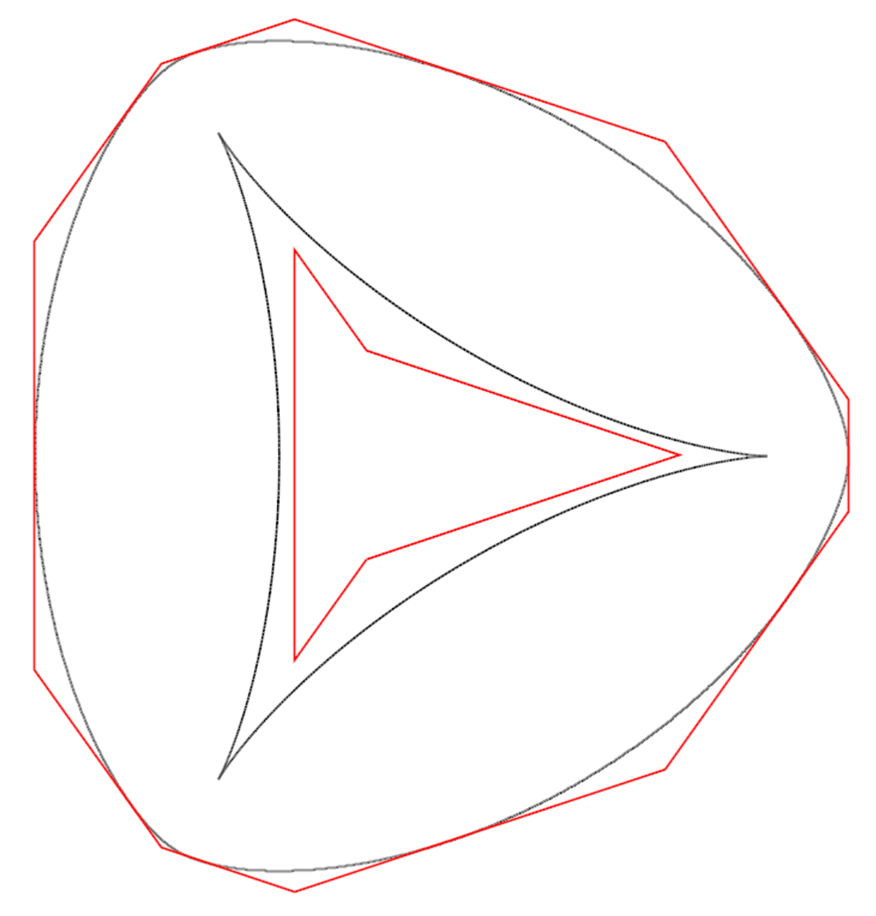}
    \includegraphics[width=0.405
    \linewidth]{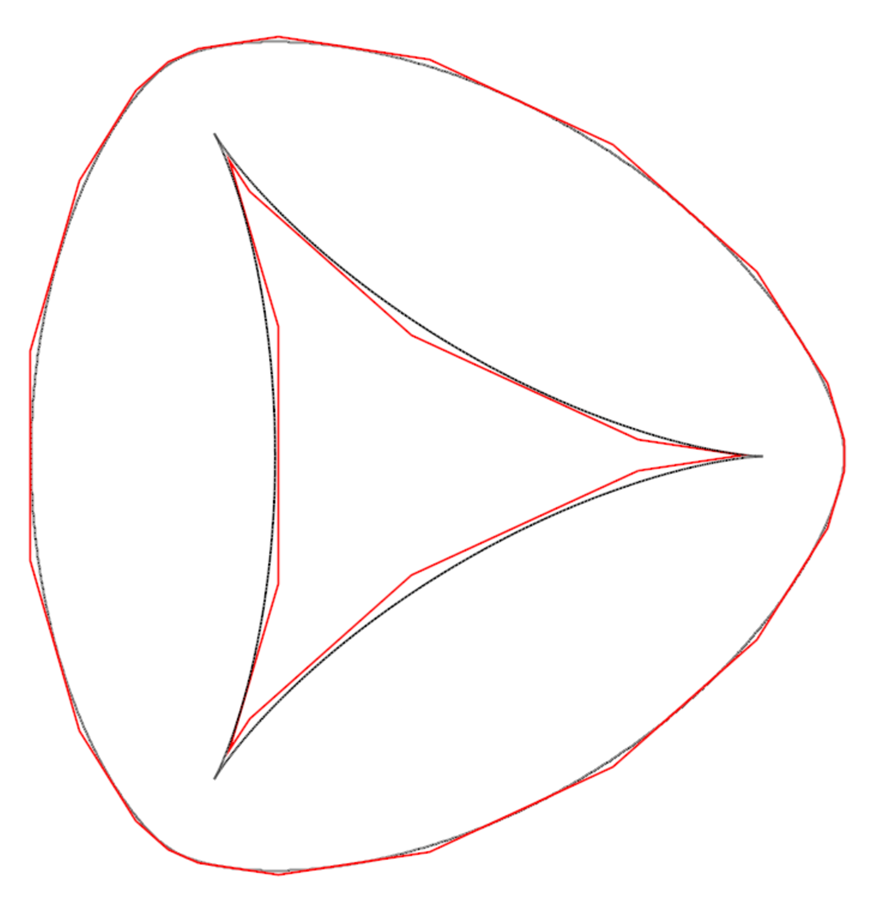}
    \caption{An oval $\mathcal{O}$, its centre symmetry set $\Css(\mathcal{O})$, the polygons $\ppos_\nu(\mathcal{O})$, and the corresponding centre symmetry sets $\Css(\ppos_\nu(\mathcal{O}))$. In the left-hand figure, the set of directions $\nu$ consists of $5$ directions, whereas in the right-hand figure it consists of $11$ directions}
    \label{fig:csscppsnot}
\end{figure}

Nevertheless, the discrete centre symmetry sets approximate the smooth one as the mesh of $\nu$ tends to zero. Thus, in this case, exact commutation is replaced by convergence in the Hausdorff metric.
\end{remark}

\begin{prop}\label{prop:css-hausdorff-convergence}
Let $M$ be a generic oval and let $(\nu_k)_{k\in\mathbb{N}}$ be a sequence
of allowable sets of directions such that
\begin{equation*}
\operatorname{mesh}(\nu_k) \to 0
\quad \text{as} \quad k \to \infty .
\end{equation*}
Then
\begin{equation*}
\Css(\ppos_{\nu_k}(M)) \to \Css(M)
\quad \text{as} \quad k \to \infty
\end{equation*}
in the Hausdorff metric.
\end{prop}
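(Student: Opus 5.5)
We parametrize the oval by the angle of its tangent direction and compare, for each $k$, the vertices of $\Css(\ppos_{\nu_k}(M))$ with points of $\Css(M)$. For $\theta\in\mathbb{R}/2\pi$ let $\gamma(\theta)$ be the point of $M$ with outward normal at angle $\theta$. Then $a(\theta)=\gamma(\theta)$ and $b(\theta)=\gamma(\theta+\pi)$ form the parallel pair in direction $\theta$. The chord line $\ell(\theta)$ passes through $a(\theta)$ and $b(\theta)$, and $\Css(M)$ is the envelope of the family $\ell(\theta)$. For a generic oval the chord $a(\theta)-b(\theta)$ never vanishes. The envelope point $c(\theta)$ is $\ell(\theta)\cap\ell(\theta')$ in the limit $\theta'\to\theta$, and it can be written explicitly as
\[
c(\theta)=a(\theta)+\mu(\theta)\bigl(b(\theta)-a(\theta)\bigr),
\qquad
\mu(\theta)=\frac{[a'(\theta),\,b(\theta)-a(\theta)]}{[a'(\theta)-b'(\theta),\,b(\theta)-a(\theta)]}.
\]
Since $a'$ and $b'$ are antiparallel with lengths equal to the radii of curvature $\rho(\theta)$ and $\rho(\theta+\pi)$, we get $\mu(\theta)=\rho(\theta)/(\rho(\theta)+\rho(\theta+\pi))\in(0,1)$. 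In particular $\mu$ is smooth and bounded, and $c$ is a continuous closed curve whose image is $\Css(M)$.

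Next we identify the discrete objects. Write $\nu_k=\{\alpha_1<\dots<\alpha_m\}$, and let $T_i$ be the tangent lines of $M$ with directions in $\nu_k$. The vertex $P_i=T_i\cap T_{i+1}$ of $\ppos_{\nu_k}(M)$ lies within distance $O(\operatorname{mesh}(\nu_k))$ of the tangency point $\gamma(\theta_i)$, and likewise for $P_{i+m}$ and $\gamma(\theta_i+\pi)$. The great diagonal $P_iP_{i+m}$ is therefore a line close to the chord $\ell$ at the nearby angle. This alone does not suffice: $D_i$ is the intersection of two nearly coincident lines, so the $O(h)$ errors, with $h$ the local spacing of directions, must be shown to be of second order relative to the angle between the lines. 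We therefore refine the estimate as follows.

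Let $\theta_i,\theta_{i+1}$ be consecutive directions with $h=\theta_{i+1}-\theta_i$. By the intersection formula for tangent lines, $P_i=\gamma(\theta_i)+\tfrac{h}{2}\rho(\theta_i)\,t(\theta_i)+O(h^2)$, where $t$ is the unit tangent. To first order this says $P_i=\gamma(\bar\theta_i)+O(h^2)$ with $\bar\theta_i$ the midpoint angle. The same holds for $P_{i+m}$ at the opposite midpoint angle. Hence the great diagonal $P_iP_{i+m}$ equals $\ell(\bar\theta_i)$ up to an $O(h^2)$ perturbation, and this perturbation depends smoothly on the data. The diagonals $P_iP_{i+m}$ and $P_{i+1}P_{i+m+1}$ are then perturbations of $\ell(\bar\theta_i)$ and $\ell(\bar\theta_{i+1})$, and the angle between $\ell(\bar\theta_i)$ and $\ell(\bar\theta_{i+1})$ is of order $h$.

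The main obstacle is to control the errors when the consecutive gaps $h_i$ and $h_{i+1}$ differ, since then the $O(h^2)$ errors on the two diagonals are not a priori correlated. To handle this, we would use the exact identity from the paper: $D_i$ divides both diagonals in the ratio $\lambda_i$, with $e_{i+n}=\frac{\lambda_i-1}{\lambda_i}e_i$. This gives $\lambda_i=|e_i|/(|e_i|+|e_{i+m}|)$. Expanding the side lengths,
\[
|e_i|=\tfrac12\rho(\theta_{i+1})(h_i+h_{i+1})+O(h^2),
\qquad
|e_{i+m}|=\tfrac12\rho(\theta_{i+1}+\pi)(h_i+h_{i+1})+O(h^2),
\]
so $\lambda_i=\mu(\theta_{i+1})+O(h)$. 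Substituting into $D_i=P_i+\lambda_i(P_{i+m}-P_i)$ yields
\[
|D_i-c(\theta_{i+1})|=O(\operatorname{mesh}(\nu_k)).
\]
This estimate is uniform in $i$ because $\rho$ is bounded above and below and is uniformly continuous on the compact oval.

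With the vertex estimate in hand, the Hausdorff convergence follows. Every vertex $D_i$ is $O(\operatorname{mesh})$-close to $c(\theta_{i+1})$. The edges $D_{i-1}D_i$ have length $O(\operatorname{mesh})$, since the $c(\theta_i)$ are uniformly continuous in $\theta$, so every point of $\Css(\ppos_{\nu_k}(M))$ is close to $\Css(M)$. Conversely, every $\theta$ lies within $\operatorname{mesh}(\nu_k)$ of some $\theta_{i+1}$, so every point of $\Css(M)$ is close to some $D_i$. Both one-sided distances therefore tend to $0$.
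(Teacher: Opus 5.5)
Your argument is correct, and it takes a different route from the paper.

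\textbf{The paper's route.} It argues qualitatively. The great diagonals of $\ppos_{\nu_k}(M)$ converge uniformly to the affine chords, and on each regular arc the envelope is the limit of $\ell_\theta\cap\ell_{\theta+h}$, so the vertices $D_i$ converge there. The finitely many cusps of $\Css(M)$ are handled separately through small neighbourhoods and the local normal form; this is where genericity is used.

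\textbf{Your route.} You never intersect nearly coincident lines. You use the exact relation $D_i=P_i+\lambda_i d_i$ with $\lambda_i=|e_i|/(|e_i|+|e_{i+n}|)$. This reduces everything to showing
\[
\lambda_i\to\mu(\theta_{i+1})=\frac{\rho(\theta_{i+1})}{\rho(\theta_{i+1})+\rho(\theta_{i+1}+\pi)},
\]
which is the ratio in which the envelope point divides the chord. The error here is $O(h_i+h_{i+1})$, because both edge lengths have leading term proportional to $h_i+h_{i+1}$ with remainder $O((h_i+h_{i+1})^2)$, and $\rho$ is bounded below. This holds whether or not $h_i=h_{i+1}$.

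\textbf{What each approach buys.} Yours gives a uniform rate $|D_i-c(\theta_{i+1})|=O(\operatorname{mesh}(\nu_k))$ for every oval of positive curvature. Because you compare in the parameter $\theta$, the cusps of $\Css(M)$ (the zeros of $c'$) play no role, so you need neither genericity nor cusp normal forms. It also makes explicit the quantitative step hidden in the paper's ``hence'': uniform convergence of lines does not by itself control their intersection when they are almost parallel. The paper's argument is shorter and mirrors the proof of the convergence theorem for affine equidistants, but it leaves that step implicit.

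\textbf{Two small remarks.} First, the chord $a(\theta)-b(\theta)$ is nonzero for every oval, not only for generic ones. Second, the obstacle you flagged for your midpoint-angle route is not real, so that route would also have worked. The chord direction turns at rate
\[
\frac{\bigl(\rho(\theta)+\rho(\theta+\pi)\bigr)\,w(\theta)}{|b(\theta)-a(\theta)|^2}>0,
\]
where $w$ is the width. Hence consecutive chords meet at an angle $\geqslant c\,(h_i+h_{i+1})$ for some $c>0$, and uncorrelated $O(h_i^2+h_{i+1}^2)$ perturbations move their intersection by only $O(\operatorname{mesh}(\nu_k))$.
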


\begin{proof}
Let $\ell_\theta$ denote the affine chord of $M$ corresponding to the pair
of points whose tangent lines have direction $\theta$. The centre symmetry set
$\Css(M)$ is the envelope of the one-parameter family $(\ell_\theta)$.

On every compact regular arc of $\Css(M)$, the envelope can be obtained as the
limit of intersections of two nearby chords,
\begin{equation*}
\ell_\theta \cap \ell_{\theta+h}
\quad \text{as} \quad h \to 0 .
\end{equation*}
The great diagonals of $\ppos_{\nu_k}(M)$ converge uniformly, as $k\to\infty$,
to the corresponding affine chords of $M$. Hence, the vertices of
$\Css(\ppos_{\nu_k}(M))$, which are intersections of consecutive great
diagonals, converge uniformly on regular arcs to the corresponding points of
$\Css(M)$.

Since $\Css(M)$ is compact and has only finitely many cusp points, we can cover its regular part by finitely many such arcs and take arbitrarily small neighbourhoods of the cusps. By the local normal form of an ordinary cusp and the continuous dependence of the affine chords on their tangent directions, the intersections of consecutive chords corresponding to sufficiently close directions remain in these neighbourhoods. The polygonal edges joining such consecutive intersections remain there as well. Together with the uniform convergence on the regular arcs, this implies that
\begin{align*}
d_H\bigl(\Css(\ppos_{\nu_k}(M)), \Css(M)\bigr) \to 0
\quad \text{as} \quad k \to \infty .
\end{align*}
\end{proof}

%%%%%%%%%%%%%%%%%%%%%%%%%%%%%%%%%%%%%%%%%%%%%%%%%%%%%
%%%%%%%%%%%%%%%%%%%%%%%%%%%%%%%%%%%%%%%%%%%%%%%%%%%%%%
%%%%%%%%%%%%%%%%%%%%%%%%%%%%%%%%%%%%%%%%%%%%%%%%%%%%%%
\section{Reconstruction of $\cppos$}

\noindent In this section we address an inverse problem for $\cppos$es. Instead of studying the affine equidistants associated with a given polygon, we ask to what extent the original polygon can be recovered from such derived objects. This question is natural in the affine setting, since the Wigner caustic and the centre symmetry set encode information about the great diagonals, midpoints of opposite vertices, and intersections of corresponding affine chords.

\begin{remark}
The definitions of the affine $\lambda$-equidistant and the centre symmetry set remain meaningful for closed polygonal chains with $2n$ edges such that each pair of edges whose indices differ by $n$ is parallel. In this more general setting, the convexity assumption is omitted.
\end{remark}

We begin with a simple but useful observation showing that the class of affine equidistants of a fixed $\cppos$ is closed under taking equidistants once again. More precisely, taking a $\delta$-equidistant of $\Eq_\lambda(\mathcal{P})$ gives another affine equidistant of the original polygon $\mathcal{P}$. As a consequence, whenever $\lambda\neq 0.5$, the polygon $\mathcal{P}$ can be recovered from its $\lambda$-equidistant. This provides the first reconstruction result and also explains why the Wigner caustic is preserved along the family of affine equidistants.

\begin{prop}
\label{2.23}
Let $\mathcal{P}$ be a $\cppos$. For every generic value $\lambda\neq 0.5$ and every $\delta\in\mathbb{R}$, the following relation holds:
\begin{align}
\label{eq:eqlofeql}    \Eq_{\delta}(\Eq_{\lambda}(\mathcal{P}))=\Eq
_{\lambda(1-\delta)+\delta(1-\lambda)}(\mathcal{P}).
\end{align}
\end{prop}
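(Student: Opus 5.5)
The plan is a direct vertex computation. By the Remark at the start of this section, $\Eq_\delta$ makes sense for any closed $2n$-gon whose opposite edges are parallel, so the first step is to check that $\mathcal{Q}:=\Eq_\lambda(\mathcal{P})$ is such a polygon. By \eqref{equation1 krawedzie remark},
\[
e_i(\lambda)=(1-\lambda)e_i+\lambda e_{i+n},
\qquad
e_{i+n}(\lambda)=(1-\lambda)e_{i+n}+\lambda e_i .
\]
Both are multiples of $e_i$ because $e_i\parallel e_{i+n}$, so they are parallel. The generic assumption guarantees that no edge degenerates.

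The second step is to compute the great diagonals of $\mathcal{Q}$. Its vertices are $Q_i:=P_i(\lambda)=(1-\lambda)P_i+\lambda P_{i+n}$. Hence
\[
Q_{i+n}=(1-\lambda)P_{i+n}+\lambda P_i,
\qquad
Q_{i+n}-Q_i=(1-2\lambda)(P_{i+n}-P_i)=(1-2\lambda)d_i .
\]
This is the point where $\lambda\neq 0.5$ is used: for $\lambda=0.5$ we would have $Q_{i+n}=Q_i$. In that case $\Eq_{0.5}(\mathcal{P})$ is traversed only with $n$ vertices and has no great diagonals in the required sense, and the formula would only degenerate to the Wigner caustic.

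The third step is to compute the vertices of $\Eq_\delta(\mathcal{Q})$:
\[
Q_i+\delta(Q_{i+n}-Q_i)
=P_i+\lambda d_i+\delta(1-2\lambda)d_i
=P_i+\bigl(\lambda+\delta-2\lambda\delta\bigr)d_i .
\]
Since $\lambda+\delta-2\lambda\delta=\lambda(1-\delta)+\delta(1-\lambda)$, this is exactly the vertex $P_i\bigl(\lambda(1-\delta)+\delta(1-\lambda)\bigr)$ from Definition~\ref{def:convex-equidistant}. The vertices agree for every $i$ and in the same cyclic order, so the two polygons coincide, which gives \eqref{eq:eqlofeql}.

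The only subtle point is the bookkeeping: the definition of $\Eq_\delta$ must be applied to $\mathcal{Q}$, which in general is not convex, and the indexing of its vertices must be kept compatible with that of $\mathcal{P}$. The first step handles this, and the indexing is the natural one. Everything else is a linear identity.
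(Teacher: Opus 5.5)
Your proof is correct and takes essentially the same route as the paper: a direct linear computation from the definitions, relying on the Remark that $\Eq_\delta$ makes sense for non-convex chains with parallel opposite edges. The only difference is that you compute vertices directly, via $Q_{i+n}-Q_i=(1-2\lambda)d_i$, whereas the paper applies \eqref{equation1 krawedzie remark} twice to the edges and then checks separately that the resulting translation is zero; your vertex computation makes that extra check unnecessary.
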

\begin{proof}
Using equation \eqref{equation1 krawedzie remark} twice, we can see that the formula for an edge of the left-hand side of \eqref{eq:eqlofeql} is
\begin{align*}
\left(e_i(\lambda)\right)(\delta)
&=(1-\delta)e_i(\lambda)+\delta e_{i+n}(\lambda)\\ 
&=(1-\delta)\left((1-\lambda)e_i+\lambda e_{i+n}\right)+\delta\left((1-\lambda)e_{i+n}+\lambda e_i\right)\\
&=(1-\delta-\lambda+2\delta\lambda)e_i+(\lambda+\delta-2\delta\lambda)e_{i+n},
\end{align*}
which is the same formula with $\lambda(1-\delta)+\delta(1-\lambda)=\lambda+\delta-2\delta\lambda$ instead of $\lambda$. Thus, the corresponding edge vectors of the two polygonal chains coincide, and hence the chains agree up to a translation. Performing the same computation for any one pair of corresponding vertices shows that this translation is zero. Therefore, the two polygonal chains coincide, which completes the proof.
\end{proof}
This proposition can be used to reconstruct the original polygon if given only the $\lambda$-equidistant. It also shows that the discrete Wigner caustic of the equidistant is the same as that of the original polygon.
\begin{cor}
 Taking $\delta=-\lambda/(1-2\lambda)$ and $\delta=0.5$ in Proposition \ref{2.23}, we get the following results:
\begin{align}
\Eq_{-\lambda/(1-2\lambda)}(\Eq_{\lambda}(\mathcal{P}))&=\Eq_0(\mathcal{P})=\mathcal{P},\\ 
\label{eq:wcthesame}\Eq_{0.5}(\Eq_{\lambda}(\mathcal{P}))&=\Eq_{0.5}(\mathcal{P}).
\end{align}
\end{cor}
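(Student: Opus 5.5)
The plan is to obtain both identities as direct specializations of Proposition~\ref{2.23}, which says that $\Eq_{\delta}(\Eq_{\lambda}(\mathcal{P}))=\Eq_{\mu}(\mathcal{P})$ with
\[
\mu=\lambda(1-\delta)+\delta(1-\lambda)=\lambda+\delta(1-2\lambda).
\]
In each case we only need to compute $\mu$ for the prescribed $\delta$. We then identify $\Eq_\mu(\mathcal{P})$ with the claimed object.

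For the first identity, put $\delta=-\lambda/(1-2\lambda)$. This is well defined exactly because $\lambda\neq 0.5$, which is the standing hypothesis of Proposition~\ref{2.23}. Then $\delta(1-2\lambda)=-\lambda$, so $\mu=0$. By Definition~\ref{def:convex-equidistant}, $P_i(0)=P_i+0\cdot d_i=P_i$, hence $\Eq_0(\mathcal{P})=\mathcal{P}$. This yields the reconstruction formula.

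For the second identity, put $\delta=0.5$. Then $\mu=\lambda+\tfrac12(1-2\lambda)=\tfrac12$, so $\Eq_{0.5}(\Eq_\lambda(\mathcal{P}))=\Eq_{0.5}(\mathcal{P})$.

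The only point needing care is that Proposition~\ref{2.23} is stated for $\mathcal{P}$ a $\cppos$, while $\Eq_\lambda(\mathcal{P})$ itself is in general not convex. Its proof, however, uses only the edge formula \eqref{equation1 krawedzie remark} and the parallelism $e_i(\lambda)\parallel e_{i+n}(\lambda)$. By the remark opening this section, the equidistant construction is meaningful for such general chains. So the outer equidistant $\Eq_\delta$ is well defined, and the vertex check (translation zero) goes through unchanged: $P_i(\lambda)+\delta\bigl(P_{i+n}(\lambda)-P_i(\lambda)\bigr)=P_i+\mu d_i$, using $d_{i+n}=-d_i$. Genericity of $\lambda$ guarantees that the edges of $\Eq_\lambda(\mathcal{P})$ are nondegenerate, so the intermediate polygon has the required $2n$ edges.
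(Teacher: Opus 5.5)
Your proposal is correct and follows the paper's argument exactly: you substitute the two values of $\delta$ into Proposition~\ref{2.23}, and your computation $\mu=\lambda+\delta(1-2\lambda)$, giving $\mu=0$ and $\mu=\tfrac12$, is right. Your extra remarks are valid and only make explicit what the paper leaves implicit: that $\lambda\neq 0.5$ is what makes $\delta=-\lambda/(1-2\lambda)$ defined, that the outer equidistant makes sense for the non-convex chain $\Eq_\lambda(\mathcal{P})$ by the remark opening Section~3, and the vertex identity $P_i(\lambda)+\delta\bigl(P_{i+n}(\lambda)-P_i(\lambda)\bigr)=P_i+\mu d_i$.
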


Observe that all affine equidistants of a $\cppos$ $\mathcal{P}$, with the exception of the Wigner caustic, have the same family of great diagonals. Consequently,
\begin{align}
\label{eq:cssthesame}
\Css(\Eq_{\lambda}(\mathcal{P}))=\Css(\mathcal{P})
\end{align}
for all $\lambda\neq 0.5$.

Therefore, by equations \eqref{eq:wcthesame} and \eqref{eq:cssthesame}, the reconstruction from the Wigner caustic and the centre symmetry set does not distinguish between these affine equidistants. More precisely, it reconstructs a polygonal chain that belongs to the family of affine equidistants of a certain $\cppos$.

Now, we consider a potential Wigner caustic and a symmetry set for which we know that a $\cppos$ exists. We present the reconstruction procedure for $\cppos$ step by step.

\begin{enumerate}
\item We start with the Wigner caustic, the centre symmetry set, and a point that belongs to the reconstructed polygon. We draw extensions of the edges of the centre symmetry set. The vertices of the reconstructed polygon will lie on these extended lines.
\item Since we already know on which lines the vertices of the polygon lie, we can begin by connecting the two vertices of the first edge of the polygon. We connect the points so that the resulting segment is parallel to the corresponding edge of the Wigner caustic. At the same time, we can mark the corresponding edge of the polygon, since the vertices of the Wigner caustic are midpoints between the vertices of the polygon lying on the same great diagonal.
\item We proceed with the next step of the procedure. We choose the adjacent edge of the Wigner caustic and construct segments parallel to it, ending on the previously determined lines. We repeat this process until all edges of the polygon have been constructed.
\end{enumerate}

\pagebreak

\begin{figure}[h!]
\centering
\begin{subfigure}[b]{0.4\textwidth}
\centering
\includegraphics[width=0.85\linewidth]{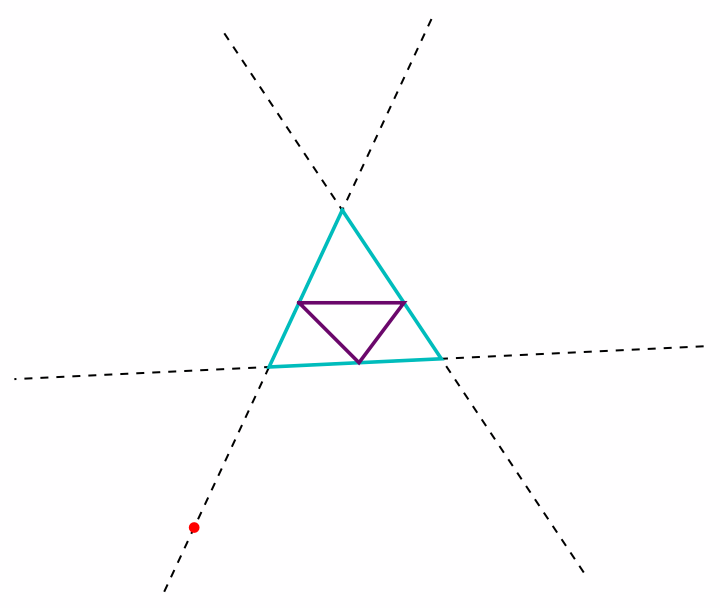}
\subcaption{We draw the main diagonals}
\end{subfigure}
\hfill
\begin{subfigure}[b]{0.4\textwidth}
\centering
\includegraphics[width=0.85\linewidth]{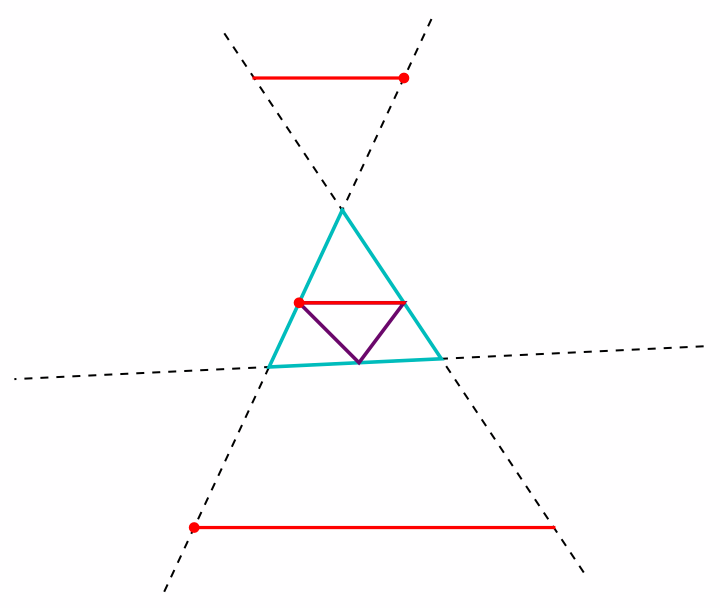}
\subcaption{We mark the first pair of parallel sides}
\end{subfigure}
\
\begin{subfigure}[b]{0.4\textwidth}
\centering
\includegraphics[width=0.85\linewidth]{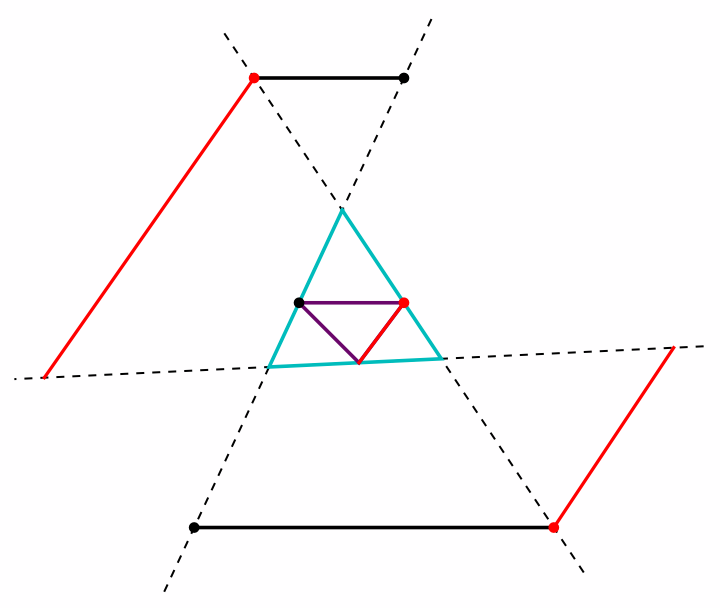}
\subcaption{We mark the second pair of parallel sides}
\end{subfigure}
\hfill
\begin{subfigure}[b]{0.4\textwidth}
\centering
\includegraphics[width=0.85\linewidth]{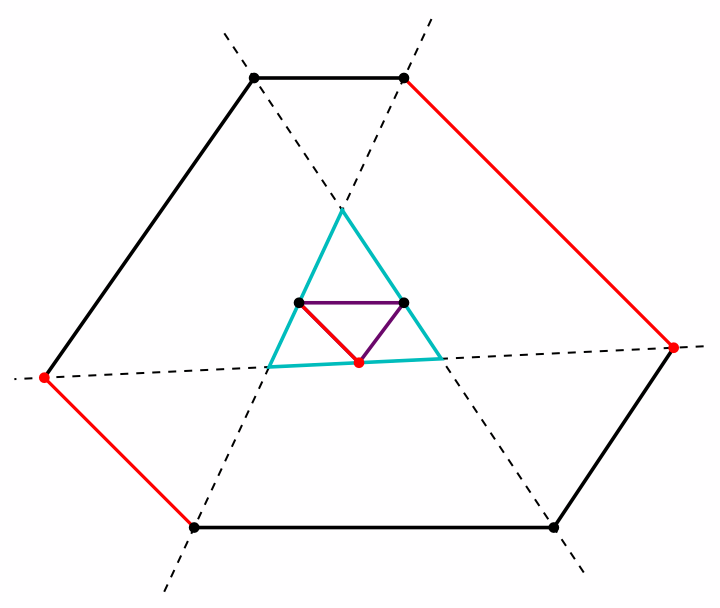}
\subcaption{We mark the third pair of parallel sides}
\end{subfigure}
\caption{Reconstruction of a hexagon from its centre symmetry set, Wigner caustic, and a single point}
\label{hexagonReconstruction}
\end{figure}

\begin{figure}[h!]
    \centering
    \includegraphics[width=0.37
    \linewidth]{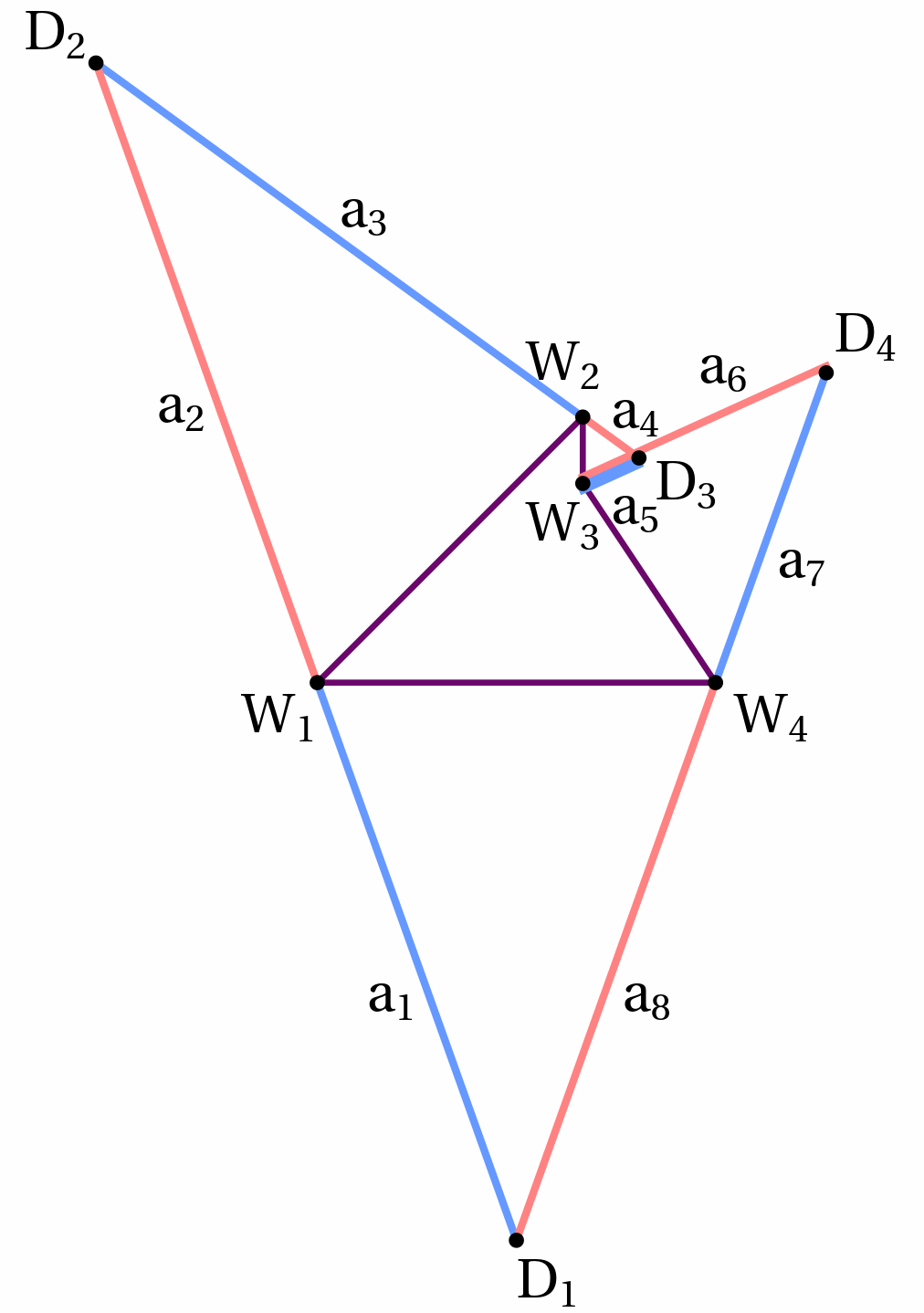}
    \caption{Labeling of the segments determined by the vertices of the potential Wigner caustic and the potential centre symmetry set}
    \label{segments}
\end{figure}

\pagebreak

The above step-by-step reconstruction for a certain hexagon is shown in Figure~\ref{hexagonReconstruction}.

At this stage, we allow the reconstructed polygonal chain to have coincident vertices or degenerate edges, provided that the given reconstruction data prescribe the lines containing its great diagonals.

We begin with the reconstruction of an arbitrary closed polygonal chain, not necessarily a $\cppos$. Let the vertices of a potential Wigner caustic be denoted by $W_1,\ldots, W_n$. Then, for each $i\in\{1,2,\ldots,n\}$, the $i$-th vertex of the Wigner caustic lies on the line determined by the points $D_i$ and $D_{i+1}$, which are vertices of a~potential symmetry set. The potential vertices $W_i$ and $D_i$ are denoted in such a~way that $W_i \in D_iD_{i+1}$. Next, consider the segment $D_iW_i$ and denote it by $a_{2i-1}$. Similarly, denote the segment $W_iD_{i+1}$ by $a_{2i}$. Figure \ref{segments} shows this notation for an example of a potential Wigner caustic and centre symmetry set. Notice that the consecutive segments can overlap like the segments $a_5=D_3W_3$ and $a_6=W_3D_4$ in Figure \ref{segments}.

If the constructed segments $a_i$, for $i\in\{1,\ldots,2n\}$, are allowed to have arbitrary lengths, then reconstruction of a closed polygonal chain is not always possible. This is because the last vertex may fail to coincide with the first one, as illustrated in~Figure~\ref{unclosed}.

\begin{figure}[h!]
    \centering
    \includegraphics[width=0.500
    \linewidth]{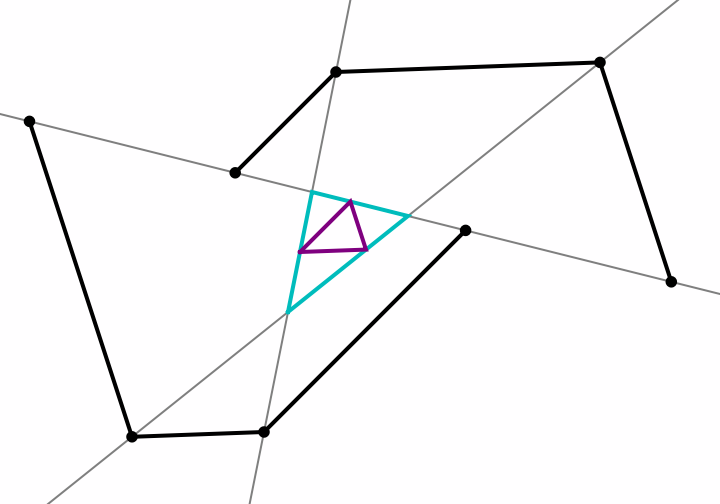}
    \caption{Polygonal chain whose first and last vertices do not coincide}
    \label{unclosed}
\end{figure}

\begin{thm}\label{thmReconstruction}
    Assume that all segment lengths $|a_i|$, for $i\in\{1,2,\ldots,2n\}$, are positive. The reconstruction of an arbitrary closed polygonal chain with $2n$ vertices is possible if and only if
    \begin{align}\label{eq:reconstruction}
    |a_1|\cdot |a_3|\cdot\ldots\cdot |a_{2n-1}|
    =
    |a_2|\cdot |a_4|\cdot\ldots\cdot |a_{2n}|.
    \end{align}
\end{thm}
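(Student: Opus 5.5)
The plan is to model the reconstruction as the composition of $2n$ central dilations (point reflections scaled by the ratio of segment lengths) and to read off the closing condition from the product of the ratios. The reconstructed polygon must satisfy two requirements. Each vertex $P_i$ lies on the line $\ell_i$ through $D_{i-1}D_i$; this is the line of the $i$-th great diagonal, indexed cyclically with $n$ lines traversed twice. The midpoint of $P_i$ and $P_{i+n}$ is the Wigner caustic vertex lying on that same line. The segment $P_iP_{i+1}$ is parallel to the Wigner caustic edge $W_{i-1}W_i$.

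First, we use the intercept theorem. Fix the first vertex $Q_0$ on the line through $D_1,W_1,D_2$. Drawing the segment parallel to the next Wigner caustic edge until it meets the next line through $D_2$ is exactly the homothety centred at $D_2$ that sends $W_1$ to $W_2$. Its ratio, in absolute value, is $|D_2W_2|/|W_1D_2| = |a_3|/|a_2|$, with a sign determined by whether $D_2$ separates the points. In general, the step from the line $D_iD_{i+1}$ to the line $D_{i+1}D_{i+2}$ is the restriction of the homothety $H_i$ with centre $D_{i+1}$ and ratio $\pm |a_{2i+1}|/|a_{2i}|$.

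Second, we exploit the fact that the reconstruction traverses the $n$ lines twice, giving $2n$ vertices. The midpoint constraint then forces the chain on the second pass to be the point reflection through $W_i$ of the first pass. The natural way to encode this is to measure positions on each line by a signed coordinate centred at $W_i$. In that coordinate each step is linear: the coordinate $x$ relative to $W_i$ maps to a coordinate relative to $W_{i+1}$ equal to a scalar multiple of $x$, with no affine shift, because $W_i \mapsto W_{i+1}$ under $H_i$. After $n$ steps we return to the first line, and the coordinate has been multiplied by
\[
\rho = \pm \frac{|a_1||a_3|\cdots|a_{2n-1}|}{|a_2||a_4|\cdots|a_{2n}|},
\]
after correctly accounting for the wrap-around step, where the centre is $D_1$ and the segments involved are $a_{2n}$ and $a_1$.

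Third, we impose closure. After $n$ steps we must land at the reflected point, coordinate $-x$, and after $2n$ steps return to $x$. Both are achieved for $x\neq 0$ exactly when $\rho=\pm1$ with the correct sign, i.e. precisely when the two products of lengths agree; the sign should come out automatically from the orientation bookkeeping. Conversely, if the products are equal, the composed map is the reflection, and choosing any starting point produces a closed chain whose opposite sides are parallel and whose diagonals have the prescribed midpoints and intersection points.

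The main obstacle is the sign and orientation bookkeeping. We must check that $|\rho|=1$ alone suffices, i.e. that the sign automatically matches the reflection required by the odd/half-turn structure of the Wigner caustic. We must also check that overlapping segments, such as $a_5,a_6$ in Figure~\ref{segments}, do not flip the sign inconsistently. I would handle this by tracking signed ratios $\overline{D_{i+1}W_{i+1}}/\overline{W_iD_{i+1}}$ along oriented lines and verifying that the total sign is fixed by the cyclic structure.
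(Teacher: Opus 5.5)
Your linearization is the right mechanism. It is the same one the paper uses: its similar triangles at $D_i$, with coefficients $k_i$ and the system $(k_i+1)|a_{2i-1}|=(k_{i+1}+1)|a_{2i}|$, are your multipliers written with unsigned lengths. However, one step needs repair and one step fails.

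The repair concerns the ``homothety''. A homothety centred at $D_{i+1}$ maps $\ell_i$ to itself, so it cannot send $W_i$ to $W_{i+1}$. The map you want is the parallel projection $\pi_i\colon\ell_i\to\ell_{i+1}$ along $W_iW_{i+1}$. It is affine, fixes $D_{i+1}$ and sends $W_i\mapsto W_{i+1}$, so in $W$-centred coordinates it is $x\mapsto c_ix$ with $|c_i|=|a_{2i+1}|/|a_{2i}|$. With this, necessity works as you describe: a reconstruction with $P_1\neq P_{n+1}$ forces $\rho=-1$, hence the two products are equal.

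The failure is your expectation that the sign of $\rho$ is fixed by the cyclic structure. Regroup the factors line by line, which makes each factor orientation-free (unlike the two-line ratios you propose to track). This gives $\rho=\prod_{i=1}^n\overline{W_iD_i}/\overline{W_iD_{i+1}}$, so $\rho$ has sign $(-1)^m$, where $m$ is the number of indices with $W_i$ strictly between $D_i$ and $D_{i+1}$. Nothing forces $m$ to be odd.

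For example, take $n=3$, $D_1=(0,0)$, $D_2=(1,0)$, $D_3=(0,1)$, $W_1=(2,0)$, $W_2=(-1,2)$, $W_3=(0,\tfrac43)$. Then $|a_1||a_3||a_5|=|a_2||a_4||a_6|=\tfrac{4\sqrt2}{3}$. Yet $m=0$ and $\rho=+1$, and $P_1=(2+x,0)$ returns to itself after three steps instead of to $2W_1-P_1$. Only the degenerate choice $x=0$ survives.

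So your claim ``$\rho=\pm1$ with the correct sign, i.e.\ precisely when the products agree'' is false. Agreement of the products gives only $|\rho|=1$, and sufficiency needs the extra hypothesis that $m$ is odd.

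For data coming from a $\cppos$ this hypothesis holds automatically. In the original indexing, the factor on the $i$-th diagonal is $(\lambda_{i-1}-\tfrac12)/(\lambda_i-\tfrac12)$, and the product telescopes to $-1$ because $\lambda_{i+n}=1-\lambda_i$. By Proposition~\ref{cusp_equivalence}(c), $m$ is then the number of cusps of the Wigner caustic, which the paper assumes odd only after this theorem.

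The paper's own sufficiency argument passes over the same point. It does so through its ``without loss of generality'' configuration at $D_n$ and its unsigned relations, which presuppose the betweenness pattern of a genuine $\cppos$. Your approach makes the issue visible, but you must state that $m$ is odd as a hypothesis rather than expect it to come out of the bookkeeping.
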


\begin{proof}
First, let us consider a potential Wigner caustic and a potential centre symmetry set for which we know that a reconstruction is possible. Now we denote the vertices of the closed polygonal chain as $P_1, \dots, P_{2n}$ such that $P_i, P_{n+i}$ are on the line passing through $D_i$,$D_{i+1}$ and the segments $P_iP_{i+1}$ are the sides of the polygon for $i \in \{1,2,\ldots,n\}$, where $D_1$ lies on the segment $W_1P_1$. We will consider triplets of points: $P_{i-1},P_i,P_{i+1}$ and their counterparts $P_{n+i-1},P_{n+i},P_{n+i+1}$. Now we will consider 3 possibilities (the cases in which $D_i$ and $D_{i+1}$ are swapped are analogous):
\definecolor{pointblue}{RGB}{0,55,145}
\definecolor{diagramred}{RGB}{205,25,35}
\definecolor{diagramgreen}{RGB}{0,105,55}
\definecolor{diagramviolet}{RGB}{120,45,160}
\begin{enumerate}[(1)]
\item \textbf{$W_i$ is on the segment $P_iD_i$} (see Figure \ref{fig:ReconstructionProof1})

\begin{figure}[h]
    \centering
    \resizebox{0.79\textwidth}{!}{
    \begin{tikzpicture}[
    x=0.85cm,
    y=0.85cm,
    line cap=round,
    line join=round,
    polygon/.style={
        draw=black,
        line width=1.25pt
    },
    diagonal/.style={
        draw=black,
        line width=0.95pt,
        dash pattern=on 8pt off 6pt
    },
    cssedge/.style={
        draw=black,
        line width=1.35pt
    },
    wigner/.style={
        draw=diagramviolet,
        line width=1.7pt
    },
    bluelabel/.style={
        text=pointblue,
        font=\large
    },
    violetlabel/.style={
        text=diagramviolet,
        font=\large
    }
]
\coordinate (Pimone) at (0,0);
\coordinate (Pi) at (9.4,0);
\coordinate (Piplusone) at (11.7,4.15);

\coordinate (Pniminusone) at (8.55,9.70);
\coordinate (Pni) at (2.75,9.70);
\coordinate (Pniplusone) at (1.715,7.8325);

\coordinate (Di) at (5.2875,5.9987);
\coordinate (Diplusone) at (4.8138,6.6897);

\coordinate (Wiminusone) at ($(Pimone)!0.5!(Pniminusone)$);
\coordinate (Wi) at ($(Pi)!0.5!(Pni)$);
\coordinate (Wiplusone) at ($(Piplusone)!0.5!(Pniplusone)$);

\draw[diagonal] (Pimone) -- (Pniminusone);
\draw[diagonal] (Pi) -- (Pni);
\draw[diagonal] (Piplusone) -- (Pniplusone);

\draw[polygon] (Pimone) -- (Pi) -- (Piplusone);
\draw[polygon] (Pniplusone) -- (Pni) -- (Pniminusone);

\draw[cssedge]
        (Di) -- (Diplusone);

\draw[wigner] (Wiminusone) -- (Wi) -- (Wiplusone);

\fill[black] (Pimone) circle (2.2pt);
\fill[black] (Pi) circle (2.2pt);
\fill[black] (Piplusone) circle (2.2pt);

\fill[black] (Pniminusone) circle (2.2pt);
\fill[black] (Pni) circle (2.2pt);
\fill[black] (Pniplusone) circle (2.2pt);

\fill[pointblue] (Di) circle (2.2pt);
\fill[pointblue] (Diplusone) circle (2.2pt);

\fill[diagramviolet] (Wiminusone) circle (2.4pt);
\fill[diagramviolet] (Wi) circle (2.4pt);
\fill[diagramviolet] (Wiplusone) circle (2.4pt);

\node[black,below left=2pt] at (Pimone) {$P_{i-1}$};
\node[black,below=3pt] at (Pi) {$P_i$};
\node[black,right=4pt] at (Piplusone) {$P_{i+1}$};

\node[black,above right=2pt] at (Pniminusone) {$P_{n+i-1}$};
\node[black,above left=2pt] at (Pni) {$P_{n+i}$};
\node[black,left=5pt] at (Pniplusone) {$P_{n+i+1}$};

\node[black,above right=0pt] at (Diplusone) {$D_{i+1}$};
\node[black,left=5pt] at (Di) {$D_i$};

\node[black, left=2pt] at (Wiminusone) {$W_{i-1}$};
\node[black,below=3pt, xshift=-2pt] at (Wi) {$W_i$};
\node[black,above=2pt] at (Wiplusone) {$W_{i+1}$};

\node[ anchor=east, font=\large ] at (13.1,6.8) {% 
$\begin{aligned} k_{i+1}\lvert a_{2i}\rvert &= \lvert P_{n+i}D_{n+i}\rvert \\[3pt] \lvert a_{2i}\rvert &= \lvert W_iD_{i+1}\rvert \\[3pt] k_i\lvert a_{2i-1}\rvert &= \lvert P_{n+i}D_i\rvert \\[3pt] \lvert a_{2i-1}\rvert &= \lvert D_iW_i\rvert \end{aligned}$ };
\end{tikzpicture}}
    \caption{The point $W_i$ lies on the segment $P_iD_i$}
    \label{fig:ReconstructionProof1}
\end{figure}

Since $W_{i-1}W_i$ is parallel to $P_{n+i-1}P_{n+i}$, triangle $\triangle P_{n+i-1}P_{n+i}D_i$ is similar to triangle $\triangle W_{i-1}W_iD_i$ with proportionality coefficient $k_i$ (we denote pairs of triangles with vertex $D_i$ as having proportionality coefficient $k_i$, where the edges of the triangles are disjoint, the only point of contact is $D_i$). Then  $$|P_{n+i}D_i|=k_i|D_iW_i|=k_i|a_{2i-1}|.$$ In an analogous way, triangle $\triangle P_{n+i}P_{n+i+1}D_{i+1}$ is similar to triangle \linebreak $\triangle W_iW_{i+1}D_{i+1}$. Therefore, $$|P_{i+n}D_{i+1}|=k_{i+1}|D_{i+1}W_i|=k_{i+1}|a_{2i}|.$$ Hence,
\begin{align*}
(k_i+1)|a_{2i-1}|=|W_iP_{n+i}|=(k_{i+1}+1)|a_{2i}|.
\end{align*}
Let's also note that:
\begin{align*}
|W_{i-1}P_{n+i-1}|=(k_i+1)|a_{2i-2}|\quad\text{and}\quad |W_{i+1}P_{n+i+1}|=(k_{i+1}+1)|a_{2i+1}|.
\end{align*}

\item \textbf{$W_i$ is on the segment $D_iD_{i+1}$} (see Figure \ref{fig:ReconstructionProof2})

Since $W_{i-1}W_i$ is parallel to $P_{i-1}P_i$,  $\triangle P_{i-1}P_iD_i$ is similar to $\triangle W_{i-1}W_iD_i$ with proportionality coefficient $k_i$ (once again, we denote pairs of triangles with vertex $D_i$ as having proportionality coefficient $k_i$ -- it is the same coefficient as in the last case). Then we obtain $$|P_iD_i|=k_i|D_iW_i|=k_i|a_{2i-1}|.$$ Hence, $\triangle P_{n+i}P_{n+i+1}D_{i+1}$ is similar to $\triangle W_iW_{i+1}D_{i+1}$ and  $$|P_{i+n}D_{i+1}|=k_{i-1}|D_{i-1}W_i|=k_{i+1}|a_{2i}|.$$ Therefore,
\begin{align*}
(k_i+1)|a_{2i-1}|=|P_iW_i|=|W_iP_{n+i}|=(k_{i+1}+1)|a_{2i}|,
\end{align*}
where we used the fact that $W_i$ is the midpoint of segment $P_iP_{n+i}$.
Notice that
\begin{align*}
|W_{i-1}P_{i-1}|=(k_i+1)|a_{2i-2}|\quad\text{and}\quad|W_{i+1}P_{n+i+1}|=(k_{i+1}+1)|a_{2i+1}|.
\end{align*}

\begin{figure}[h]
    \centering

    \resizebox{0.85\linewidth}{!}{%
    \begin{tikzpicture}[
        x=0.85cm,
        y=0.85cm,
        line cap=round,
        line join=round,
        polygon/.style={
            draw=black,
            line width=1.25pt
        },
        diagonal/.style={
            draw=black,
            line width=0.95pt,
            dash pattern=on 8pt off 6pt
        },
        cssedge/.style={
            draw=black,
            line width=1.45pt
        },
        wigner/.style={
            draw=diagramviolet,
            line width=1.8pt
        },
        bluelabel/.style={
            text=pointblue,
            font=\large
        },
        violetlabel/.style={
            text=diagramviolet,
            font=\large
        }
    ]

    %------------------------------------------------
    % Vertices of the CPPOS
    %
    %------------------------------------------------
    \coordinate (Pimone) at (1.20,0.80);
    \coordinate (Pi) at (10.50,0.80);
    \coordinate (Piplusone) at (12.34,3.36);

    \coordinate (Pniminusone) at (6.80,9.00);
    \coordinate (Pni) at (2.50,9.00);
    \coordinate (Pniplusone) at (0.20,5.80);

    %------------------------------------------------
    % Intersections of consecutive great diagonals
    %
    %------------------------------------------------
    \coordinate (Di) at (5.0294,6.4074);
    \coordinate (Diplusone) at (6.9444,4.4444);

    %------------------------------------------------
    % Vertices of the Wigner caustic
    %
    %------------------------------------------------
    \coordinate (Wiminusone)
        at ($(Pimone)!0.5!(Pniminusone)$);

    \coordinate (Wi)
        at ($(Pi)!0.5!(Pni)$);

    \coordinate (Wiplusone)
        at ($(Piplusone)!0.5!(Pniplusone)$);

    %------------------------------------------------
    % Great diagonals
    %------------------------------------------------
    \draw[diagonal]
        (Pimone) -- (Pniminusone);

    \draw[diagonal]
        (Pni) -- (Pi);

    \draw[diagonal]
        (Pniplusone) -- (Piplusone);

    %------------------------------------------------
    % Visible edges of the CPPOS
    %------------------------------------------------
    \draw[polygon]
        (Pniplusone) -- (Pni) -- (Pniminusone);

    \draw[polygon]
        (Pimone) -- (Pi) -- (Piplusone);

    %------------------------------------------------
    % The corresponding edge of the centre symmetry set
    %------------------------------------------------
    \draw[cssedge]
        (Di) -- (Diplusone);

    %------------------------------------------------
    % The corresponding fragment of the Wigner caustic
    %------------------------------------------------
    \draw[wigner]
        (Wiminusone) -- (Wi) -- (Wiplusone);

    %------------------------------------------------
    % Blue points: vertices of the polygon and the CSS
    %------------------------------------------------
    \fill[black] (Pimone) circle (2.2pt);
    \fill[black] (Pi) circle (2.2pt);
    \fill[black] (Piplusone) circle (2.2pt);

    \fill[black] (Pniminusone) circle (2.2pt);
    \fill[black] (Pni) circle (2.2pt);
    \fill[black] (Pniplusone) circle (2.2pt);

    \fill[pointblue] (Di) circle (2.2pt);
    \fill[pointblue] (Diplusone) circle (2.2pt);

    %------------------------------------------------
    % Violet points: vertices of the Wigner caustic
    %------------------------------------------------
    \fill[diagramviolet] (Wiminusone) circle (2.4pt);
    \fill[diagramviolet] (Wi) circle (2.4pt);
    \fill[diagramviolet] (Wiplusone) circle (2.4pt);

    %------------------------------------------------
    % Labels of the polygon vertices
    %------------------------------------------------
    \node[
        black,
        below left=3pt
    ] at (Pimone) {$P_{i-1}$};

    \node[
        black,
        below=4pt
    ] at (Pi) {$P_i$};

    \node[
        black,
        right=4pt
    ] at (Piplusone) {$P_{i+1}$};

    \node[
        black,
        above right=2pt
    ] at (Pniminusone) {$P_{n+i-1}$};

    \node[
        black,
        above left=2pt
    ] at (Pni) {$P_{n+i}$};

    \node[
        black,
        left=5pt
    ] at (Pniplusone) {$P_{n+i+1}$};

    %------------------------------------------------
    % Labels of the centre symmetry set vertices
    %------------------------------------------------
    \node[
        black,
        right=3pt
    ] at (Di) {$D_i$};

    \node[
        black,
        right=5pt,
        yshift=4pt
    ] at (Diplusone) {$D_{i+1}$};

    %------------------------------------------------
    % Labels of the Wigner caustic vertices
    %------------------------------------------------
    \node[
        black,
        above=7pt
    ] at (Wiminusone) {$W_{i-1}\ \ \ $};

    \node[
        black,
        above right=2pt
    ] at (Wi) {$W_i$};

    \node[
        black,
        below=5pt
    ] at (Wiplusone) {$W_{i+1}$};

    %------------------------------------------------
    % Length relations displayed next to the diagram
    %------------------------------------------------
    \node[ anchor=east, text=black, font=\large ] at (13.30,8.20) {% 
    $\begin{aligned} k_i\lvert a_{2i-1}\rvert &= \lvert P_{n+i}D_i\rvert \\[4pt] \lvert a_{2i-1}\rvert &= \lvert D_iW_i\rvert \\[4pt] \lvert a_{2i}\rvert &= \lvert W_iD_{i+1}\rvert \\[4pt] k_{i+1}\lvert a_{2i}\rvert &= \lvert D_{i+1}P_i\rvert \end{aligned}$ };

    \end{tikzpicture}%
    }

    \caption{The point $W_i$ lies on the segment $D_iD_{i+1}$}
    \label{fig:ReconstructionProof2}
\end{figure}

\item \textbf{$W_i$ is on the segment $D_{i+1}P_{n+i}$} (see Figure \ref{fig:ReconstructionProof3})

\begin{figure}[h]
    \centering

    \resizebox{0.85\linewidth}{!}{%
    \begin{tikzpicture}[
        x=0.85cm,
        y=0.85cm,
        line cap=round,
        line join=round,
        polygon/.style={
            draw=black,
            line width=1.25pt
        },
        diagonal/.style={
            draw=black,
            line width=0.95pt,
            dash pattern=on 8pt off 6pt
        },
        cssedge/.style={
            draw=black,
            line width=1.45pt
        },
        wigner/.style={
            draw=diagramviolet,
            line width=1.8pt
        },
        bluelabel/.style={
            text=pointblue,
            font=\large
        },
        violetlabel/.style={
            text=diagramviolet,
            font=\large
        }
    ]

    %------------------------------------------------
    % Vertices of the CPPOS
    %
    % The sides P_{i-1}P_i and
    % P_{n+i-1}P_{n+i} are parallel.
    %
    % The sides P_iP_{i+1} and
    % P_{n+i}P_{n+i+1} are also parallel.
    %------------------------------------------------
    \coordinate (Pimone) at (5.30,0.80);
    \coordinate (Pi) at (10.00,0.80);
    \coordinate (Piplusone) at (12.20,3.00);

    \coordinate (Pniminusone) at (10.70,8.60);
    \coordinate (Pni) at (2.80,8.60);
    \coordinate (Pniplusone) at (0.05,5.85);

    %------------------------------------------------
    % Vertices of the centre symmetry set
    %
    % D_i is the intersection of the lines
    % P_{i-1}P_{n+i-1} and P_iP_{n+i}.
    %
    % D_{i+1} is the intersection of the lines
    % P_iP_{n+i} and P_{i+1}P_{n+i+1}.
    %------------------------------------------------
    \coordinate (Di) at (7.3143,3.7095);
    \coordinate (Diplusone) at (6.8000,4.2667);

    %------------------------------------------------
    % Vertices of the Wigner caustic
    %
    % Each W_j is the midpoint of the corresponding
    % great diagonal P_jP_{n+j}.
    %------------------------------------------------
    \coordinate (Wiminusone)
        at ($(Pimone)!0.5!(Pniminusone)$);

    \coordinate (Wi)
        at ($(Pi)!0.5!(Pni)$);

    \coordinate (Wiplusone)
        at ($(Piplusone)!0.5!(Pniplusone)$);

    %------------------------------------------------
    % Great diagonals
    %------------------------------------------------
    \draw[diagonal]
        (Pimone) -- (Pniminusone);

    \draw[diagonal]
        (Pi) -- (Pni);

    \draw[diagonal]
        (Piplusone) -- (Pniplusone);

    %------------------------------------------------
    % Visible edges of the CPPOS
    %------------------------------------------------
    \draw[polygon]
        (Pniplusone) -- (Pni) -- (Pniminusone);

    \draw[polygon]
        (Pimone) -- (Pi) -- (Piplusone);

    %------------------------------------------------
    % The corresponding edge of the centre symmetry set
    %------------------------------------------------
    \draw[cssedge]
        (Diplusone) -- (Di);

    %------------------------------------------------
    % The corresponding fragment of the Wigner caustic
    %------------------------------------------------
    \draw[wigner]
        (Wiminusone) -- (Wi) -- (Wiplusone);

    %------------------------------------------------
    % Blue points: vertices of the CPPOS and the CSS
    %------------------------------------------------
    \fill[black] (Pimone) circle (2.2pt);
    \fill[black] (Pi) circle (2.2pt);
    \fill[black] (Piplusone) circle (2.2pt);

    \fill[black] (Pniminusone) circle (2.2pt);
    \fill[black] (Pni) circle (2.2pt);
    \fill[black] (Pniplusone) circle (2.2pt);

    \fill[pointblue] (Di) circle (2.2pt);
    \fill[pointblue] (Diplusone) circle (2.2pt);

    %------------------------------------------------
    % Violet points: vertices of the Wigner caustic
    %------------------------------------------------
    \fill[diagramviolet] (Wiminusone) circle (2.4pt);
    \fill[diagramviolet] (Wi) circle (2.4pt);
    \fill[diagramviolet] (Wiplusone) circle (2.4pt);

    %------------------------------------------------
    % Labels of the CPPOS vertices
    %------------------------------------------------
    \node[
        black,
        below left=3pt
    ] at (Pimone) {$P_{i-1}$};

    \node[
        black,
        below=4pt
    ] at (Pi) {$P_i$};

    \node[
        black,
        right=4pt
    ] at (Piplusone) {$P_{i+1}$};

    \node[
        black,
        above right=2pt
    ] at (Pniminusone) {$P_{n+i-1}$};

    \node[
        black,
        above left=2pt
    ] at (Pni) {$P_{n+i}$};

    \node[
        black,
        left=5pt
    ] at (Pniplusone) {$P_{n+i+1}$};

    %------------------------------------------------
    % Labels of the centre symmetry set vertices
    %------------------------------------------------
    \node[
        black,
        right=2pt
    ] at (Di) {$D_i$};

    \node[
        black,
        below=2pt
    ] at (Diplusone) {$D_{i+1}\ \ $};

    %------------------------------------------------
    % Labels of the Wigner caustic vertices
    %------------------------------------------------
    \node[
        black,
        right=2pt
    ] at (Wiminusone) {$W_{i-1}$};

    \node[
        black,
        above=3pt
    ] at (Wi) {$W_i$};

    \node[
        black,
        below left=2pt
    ] at (Wiplusone) {$W_{i+1}$};

    %------------------------------------------------
    % Length relations displayed next to the diagram
    %------------------------------------------------
    \node[ anchor=east, font=\large ] at (13.10,5.40) {% 
    $\begin{aligned} \lvert a_{2i-1}\rvert &= \lvert W_iD_i\rvert \\[3pt] k_i\lvert a_{2i-1}\rvert &= \lvert D_iP_i\rvert \\[3pt] \lvert a_{2i}\rvert &= \lvert W_iD_{i+1}\rvert \\[3pt] k_{i+1}\lvert a_{2i}\rvert &= \lvert D_{i+1}P_i\rvert \end{aligned}$ };
    \end{tikzpicture}%
    }

    \caption{The point $W_i$ lies on the segment $D_{i+1}P_{n+i}$}
    \label{fig:ReconstructionProof3}
\end{figure}

This case is analogous to the first one -- we only swap the indices of $P_{i-1}$, $P_i$, $P_{i+1}$ and their counterparts $P_{n+i-1}$, $P_{n+i}$, $P_{n+i+1}$, respectively. The rest of the proof is identical. So one more time we obtain: 
\begin{align*}
(k_i+1)|a_{2i-1}|=|W_iP_i|=(k_{i+1}+1)|a_{2i}|.
\end{align*}
Again,
\begin{align*}
|W_{i-1}P_{i-1}|=(k_i+1)|a_{2i-2}|\quad\text{and}\quad|W_{i+1}P_{i+1}|=(k_{i+1}+1)|a_{2i+1}|.
\end{align*}
\end{enumerate}

Since $|P_iW_i|=|W_iP_{n+i}|$ for all $i \in\{1,2,\ldots,n\}$ and by equalities we obtained in the three considered cases above, we get the following system of equations:
\begin{align}
\label{eq:systemeq1}
\left\{
\begin{aligned}
(k_1+1)|a_1| &= (k_2+1)|a_2|, \\
(k_2+1)|a_3| &= (k_3+1)|a_4|, \\
             &\vdots \\
(k_{n-1}+1)|a_{2n-3}| &= (k_n+1)|a_{2n-2}|, \\
(k_n+1)|a_{2n-1}| &= (k_1+1)|a_{2n}|.
\end{aligned}
\right.
\end{align}
By multiplying the sides in \eqref{eq:systemeq1}, we easily obtain the necessary condition
\begin{align*}
|a_1|\cdot |a_3|\cdot\ldots\cdot |a_{2n-1}| = |a_2|\cdot |a_4|\cdot\ldots\cdot |a_{2n}|
\end{align*} 
that is required for all closed polygonal chains with $2n$ vertices. 

Now we assume that a certain Wigner caustic and centre symmetry set meet the condition \eqref{eq:reconstruction}. We will recreate the polygonal chain using the reconstruction procedure that was described earlier, where $P_1$ is the starting point, and we stop before drawing the last two sides of the polygonal chain. The reconstruction is possible if and only if $P_{n-1}P_n$ or $P_{2n-1}P_{2n}$ is parallel to $W_{n-1}W_n$, because one parallelism implies the second one from triangle similarity and the fact that $W_{n-1}$, $W_n$ are the midpoints of the segments $P_{n-1}P_{2n-1}, P_nP_{2n}$ (see Figure \ref{fig:reconstructionProof4}). Without loss of generality, let's consider the case where $D_n$ is a common vertex of triangles $\triangle P_{n-1}P_nD_n$ and $\triangle W_{n-1}W_nD_n$, which are disjoint except for $D_n$. 

\begin{figure}[h]
    \centering

    \resizebox{0.69\linewidth}{!}{%
    \begin{tikzpicture}[
        x=0.55cm,
        y=0.55cm,
        line cap=round,
        line join=round,
        polygon/.style={
            draw=black,
            line width=1.25pt
        },
        diagonal/.style={
            draw=black,
            line width=0.9pt,
            dash pattern=on 8pt off 6pt
        },
        css/.style={
            draw=pointblue,
            line width=1.35pt
        },
        wigner/.style={
            draw=diagramviolet,
            line width=1.55pt
        },
        Plabel/.style={
            text=black,
            font=\large
        },
        Dlabel/.style={
            text=pointblue,
            font=\large
        },
        Wlabel/.style={
            text=diagramviolet,
            font=\large
        }
    ]

    \coordinate (P1) at (0,0);
    \coordinate (P2) at (5,0);
    \coordinate (P3) at (14,6);
    \coordinate (P4) at (17,18);

    \coordinate (P5) at (8,27);
    \coordinate (P6) at (5.5,27);
    \coordinate (P7) at (-11,16);
    \coordinate (P8) at (-12,12);

    \coordinate (W1) at ($(P1)!0.5!(P5)$);
    \coordinate (W2) at ($(P2)!0.5!(P6)$);
    \coordinate (W3) at ($(P3)!0.5!(P7)$);
    \coordinate (W4) at ($(P4)!0.5!(P8)$);

    \coordinate (D1) at (4.5714286,15.4285714);
    \coordinate (D2) at (5.3333333,18);
    \coordinate (D3) at (5.1764706,9.5294118);
    \coordinate (D4) at (-4.75,13.5);

    %------------------------------------------------
    % The outer CPPOS
    %------------------------------------------------
    \draw[polygon]
%        (P1) -- (P2) -- (P3) -- (P4) --
%        (P5) -- (P6) -- (P7) -- (P8) -- cycle;
        (P8)--(P1)--(P2)--(P3);
    \draw[polygon] (P4)--(P5)--(P6)--(P7);
    \draw[diagonal] (P7)--(P8);
    \draw[diagonal] (P3)--(P4);

    %------------------------------------------------
    % Great diagonals
    %------------------------------------------------
    \draw[diagonal] (P1) -- (P5);
    \draw[diagonal] (P2) -- (P6);
    \draw[diagonal] (P3) -- (P7);
    \draw[diagonal] (P4) -- (P8);

    %------------------------------------------------
    % Centre symmetry set
    %------------------------------------------------
    \draw[css]
        (D1) -- (D2) -- (D3) -- (D4) -- cycle;

    %------------------------------------------------
    % Wigner caustic
    %------------------------------------------------
    \draw[wigner]
        (W1) -- (W2) -- (W3) -- (W4) -- cycle;

    %------------------------------------------------
    % Vertices of the CPPOS
    %------------------------------------------------
    \fill[black] (P1) circle (2.4pt);
    \fill[black] (P2) circle (2.4pt);
    \fill[black] (P3) circle (2.4pt);
    \fill[black] (P4) circle (2.4pt);
    \fill[black] (P5) circle (2.4pt);
    \fill[black] (P6) circle (2.4pt);
    \fill[black] (P7) circle (2.4pt);
    \fill[black] (P8) circle (2.4pt);

    %------------------------------------------------
    % Vertices of the centre symmetry set
    %------------------------------------------------
    \fill[pointblue] (D1) circle (2.3pt);
    \fill[pointblue] (D2) circle (2.3pt);
    \fill[pointblue] (D3) circle (2.3pt);
    \fill[pointblue] (D4) circle (2.3pt);

    %------------------------------------------------
    % Vertices of the Wigner caustic
    %------------------------------------------------
    \fill[diagramviolet] (W1) circle (2.3pt);
    \fill[diagramviolet] (W2) circle (2.3pt);
    \fill[diagramviolet] (W3) circle (2.3pt);
    \fill[diagramviolet] (W4) circle (2.3pt);

    %------------------------------------------------
    % Labels of the CPPOS vertices
    %------------------------------------------------
    \node[Plabel,below left=2pt]
        at (P1) {\LARGE $P_1$};

    \node[Plabel,below=3pt]
        at (P2) {\LARGE $P_2$};

    \node[Plabel,right=4pt]
        at (P3) {\LARGE $P_3$};

    \node[Plabel,right=4pt]
        at (P4) {\LARGE $P_4$};

    \node[Plabel,above right=2pt]
        at (P5) {\LARGE $P_5$};

    \node[Plabel,above=3pt]
        at (P6) {\LARGE $P_6$};

    \node[Plabel,left=4pt]
        at (P7) {\LARGE $P_7$};

    \node[Plabel,left=4pt]
        at (P8) {\LARGE $P_8$};

    %\node[Plabel,above=24pt]
    %    at (P8) {\Huge$?$};

    %\node[Plabel,below=80pt]
    %    at (P4) {\Huge$?$};
    \node[Plabel,above left=1pt]
        at (D1) {\LARGE $D_1$};

    \node[Plabel,left=3pt]
        at (D2) {\LARGE $D_2$};

    \node[Plabel,below=3pt]
        at (D3) {\LARGE $\ \ \ \ D_3$};

    \node[Plabel,above=3pt]
        at (D4) {\LARGE $D_4$};
   \node[Plabel,left=3pt]
        at (W1) {\LARGE $W_1$};

    \node[Plabel, right=4pt]
        at (W2) {\LARGE $W_2$};

    \node[Plabel,below=3pt]
        at (W3) {\LARGE $W_3$};

    \node[Plabel,above=3pt]
        at (W4) {\LARGE $W_4$};

    \end{tikzpicture}%
    }
    \caption{Case for $n=4$}
    \label{fig:reconstructionProof4}
\end{figure}

This condition is satisfied when the triangles mentioned above are similar. Therefore, we need to show the following equation to end the proof of the theorem:
\begin{align}
\label{eq:reconstructionProofEnd}\frac{|P_nD_n|}{|P_{n-1}D_n|}=\frac{|a_{2n-1}|}{|a_{2n-2}|}.
\end{align}
Also, from triangle similarities, using the reconstruction method, we get almost identical systems of equations in an analogous way as before (of course, without equations containing $k_n$):
\begin{align}
\label{eq:systemofeq2}
\left\{
\begin{aligned}
(k_1+1)|a_1| &= (k_2+1)|a_2|, \\
(k_2+1)|a_3| &= (k_3+1)|a_4|, \\
             &\vdots \\
(k_{n-2}+1)|a_{2n-5}| &= (k_{n-1}+1)|a_{2n-4}|.
\end{aligned}
\right.
\end{align}
Since
\[
(k_{n-1}+1)|a_{2n-3}| = |a_{2n-2}|+|P_{n-1}D_n| \text{ and }
(k_1+1)|a_{2n}| = |a_{2n-1}|+|P_nD_n|,
\]
we obtain that
\[
\frac{|P_nD_n|}{|P_{n-1}D_n|}=\frac{(k_1+1)|a_{2n}|-|a_{2n-1}|}{(k_{n-1}+1)|a_{2n-3}|-|a_{2n-2}|}.
\]
Then, by \eqref{eq:systemofeq2}, we get the following equations:
\begin{align*}
(k_1+1)|a_{2n}|&=\frac{|a_{2n}|}{|a_1|}(k_1+1)|a_1|=\frac{|a_{2n}|}{|a_1|}(k_2+1)|a_2|=\frac{|a_{2n}||a_2|}{|a_1||a_3|}(k_2+1)|a_3|=\dots\\ 
&
=(k_{n-2}+1)|a_{2n-5}|\cdot\frac{|a_{2n}||a_2|\dots|a_{2n-6}|}{|a_1|\dots|a_{2n-5}|}=(k_{n-1}+1)|a_{2n}|\cdot\frac{|a_2|\dots|a_{2n-4}|}{|a_1|\dots|a_{2n-5}|}.
\end{align*}
Finally, using the fact that $|a_1|\cdot |a_3|\cdot\ldots\cdot |a_{2n-1}| = |a_2|\cdot |a_4|\cdot\ldots\cdot |a_{2n}|$, we get:
\begin{align*}
\frac{|P_nD_n|}{|P_{n-1}D_n|}
&=\frac{(k_{n-1}+1)|a_{2n}|\cdot\frac{|a_2|\dots|a_{2n-4}|}{|a_1|\dots|a_{2n-5}|}-|a_{2n-1}|}{(k_{n-1}+1)|a_{2n-3}|-|a_{2n-2}|}
\\ &=
\frac{(k_{n-1}+1)|a_{2n}|\cdot\frac{|a_2|\dots|a_{2n-4}|}{|a_1|\dots|a_{2n-5}|}-|a_{2n-1}|}{(k_{n-1}+1)|a_{2n-3}|-|a_{2n-2}|}\cdot\frac{\frac{|a_{2n-2}|}{|a_{2n-3}||a_{2n-1}|}}{\frac{|a_{2n-2}|}{|a_{2n-3}||a_{2n-1}|}}
\\ &=\frac{(k_{n-1}+1)-\frac{|a_{2n-2}|}{|a_{2n-3}|}}{\frac{(k_{n-1}+1)|a_{2n-2}|}{|a_{2n-1}|}-\frac{|a_{2n-2}|^2}{|a_{2n-3}||a_{2n-1}|}}
=
\frac{|a_{2n-1}|}{|a_{2n-2}|}\cdot\frac{(k_{n-1}+1)-\frac{|a_{2n-2}|}{|a_{2n-3}|}}{(k_{n-1}+1)-\frac{|a_{2n-2}|}{|a_{2n-3}|}}
=\frac{|a_{2n-1}|}{|a_{2n-2}|}.
\end{align*}
Therefore, equation \eqref{eq:reconstructionProofEnd} holds, and we finish the proof.
\end{proof}

Theorem \ref{thmReconstruction} provides a general criterion for reconstructing arbitrary closed polygonal chains. However, in order to reconstruct a $\cppos$, additional information is required. Figure~\ref{chains} shows examples of closed polygonal chains reconstructed solely from the potential Wigner caustic and the centre symmetry set that satisfy equation~\eqref{eq:reconstruction}.

We now present a theorem that allows us to determine whether a $\cppos$ can be reconstructed from a given potential Wigner caustic. From now on, we consider only potential Wigner caustics that indeed satisfy the necessary conditions for being a Wigner caustic, that is, they have an odd number of cusps greater than or equal to three.

\begin{figure}[h!]
\centering
\begin{subfigure}[b]{0.49\textwidth}
\centering
\includegraphics[height=4.5cm,keepaspectratio]{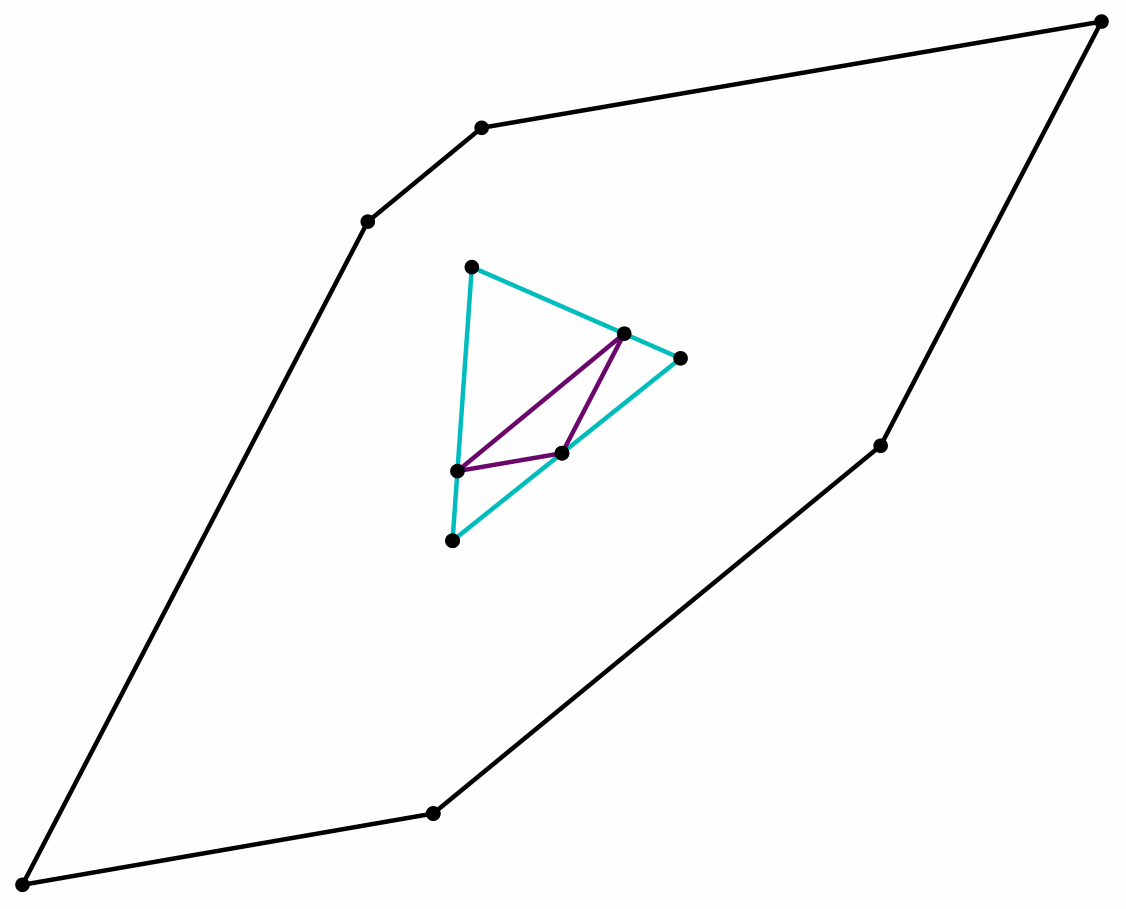}
\subcaption{A closed polygonal chain with $6$ vertices}
\end{subfigure}
\hfill
\begin{subfigure}[b]{0.49\textwidth}
\centering
\includegraphics[height=4.0cm,keepaspectratio]{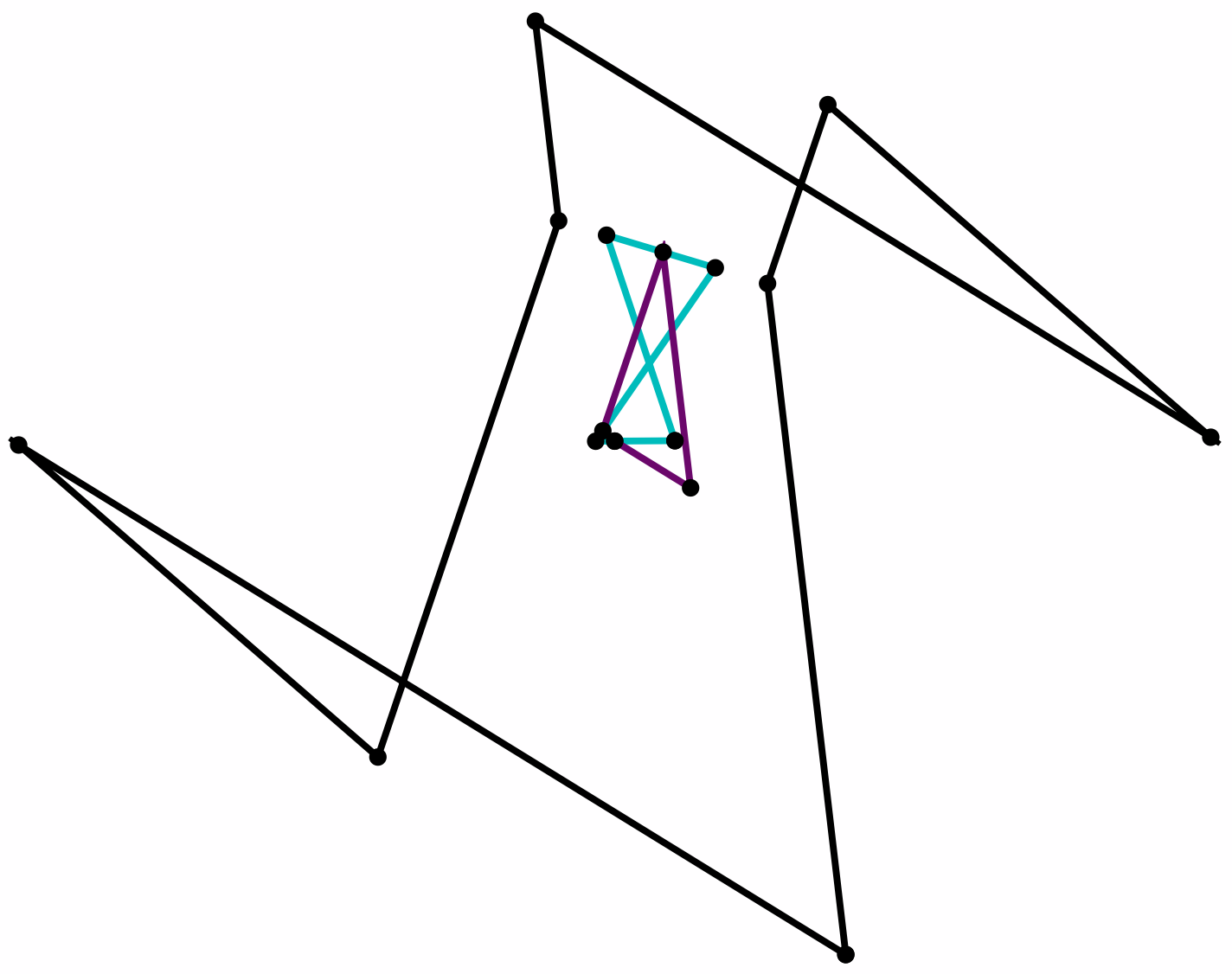}
\subcaption{A closed polygonal chain with $8$ vertices}
\end{subfigure}
\caption{Closed polygonal chains reconstructed from singular sets}
\label{chains}
\end{figure}

\pagebreak

Consider a potential Wigner caustic and extend all of its edges. Each pair of consecutive extended edges determines two pairs of adjacent angles. Denote by $\alpha_i$ the exterior angles at the $i$-th vertex of the potential Wigner caustic, together with the angles equal in measure to them, as shown in Figure~\ref{katy}.

\begin{figure}[h!]
\centering
\includegraphics[width=1\linewidth]{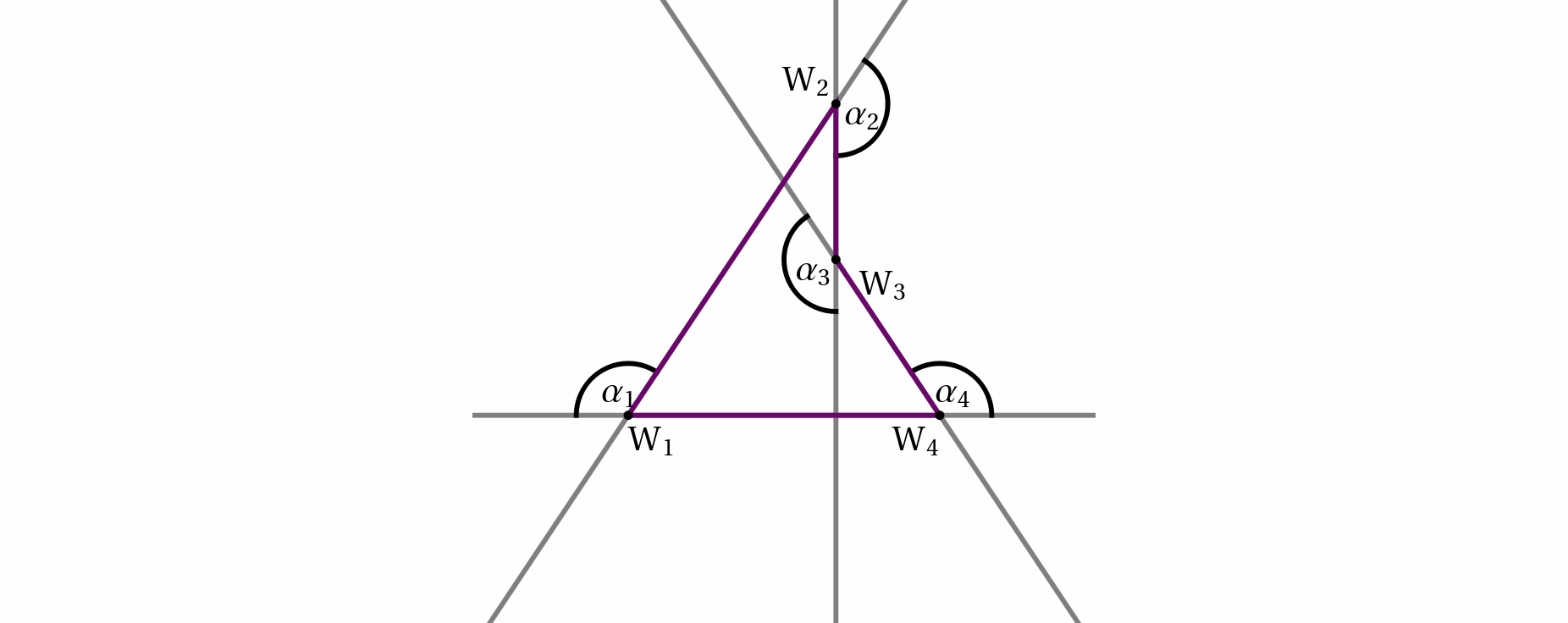}
\caption{Exterior angles of a potential Wigner caustic}
\label{katy}
\end{figure}

We slightly modify the standard definition of an exterior angle. For convenience, we define the exterior angle corresponding to a reflex interior angle as the angle that complements it to $2\pi$. As usual, the exterior angle corresponding to an interior angle of measure less than $\pi$ is defined as the angle that complements it to $\pi$.

In what follows, by the existence of a reconstruction of a $\cppos$ we mean the~existence of a $\cppos$ within the one-parameter family obtained by varying the~initial point of the reconstruction.

\begin{thm}\label{thm:angleRecon}
Assume that equality~\eqref{eq:reconstruction} holds.
The reconstruction of a $\cppos$ exists if and only if
\begin{align}
\label{eq:anglesSum}
\sum_{i=1}^{n} 2\alpha_i = (2n-2)\cdot \pi,
\end{align}
where the angles $\alpha_i$ are the exterior angles of the potential Wigner caustic.
\end{thm}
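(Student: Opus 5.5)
The plan is to translate \eqref{eq:anglesSum} into a statement about the turning of the edge directions of the reconstructed chain. The tool is formula \eqref{equation2 krawedzie remark} with $\lambda=0.5$:
\[
e_i(0.5)=\frac{\lambda_i-0.5}{\lambda_i}\,e_i .
\]
Thus every edge of the Wigner caustic is parallel to the corresponding edge $e_i$ of $\mathcal{P}$, and it has the same or the opposite orientation according to the sign of $\lambda_i-0.5$.

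\emph{Necessity.} For a $\cppos$ let $\theta_i\in(0,\pi/2)$ be the turning angle from $e_{i-1}$ to $e_i$; it lies in this interval by conditions (2) and (3) of Definition~\ref{CPPOS definition}. Since the $2n$ edges turn once in total, and $e_{i+n}$ is antiparallel to $e_i$ by \eqref{eq:homothety}, we get $\theta_1+\dots+\theta_n=\pi$. Next I would check, using Proposition~\ref{cusp_equivalence}, that with the modified convention for exterior angles one has $\alpha_i=\pi-\theta_i$ at every vertex of $\Eq_{0.5}(\mathcal{P})$.
\begin{itemize}
\item At a cusp the two edges meet at the acute angle $\theta_i$, so the ordinary exterior angle is $\pi-\theta_i$.
\item At a non-cusp vertex the two edges keep the orientation of $e_{i-1},e_i$ up to a common sign. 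For a curve of odd rotation (half-turn) type the interior angle on the side used in Figure~\ref{katy} is the reflex angle $\pi+\theta_i$, whose modified exterior angle is $2\pi-(\pi+\theta_i)=\pi-\theta_i$.
\end{itemize}
Summing over the vertices gives $\sum\alpha_i=n\pi-\pi$, which is \eqref{eq:anglesSum}. I would verify this bookkeeping on the hexagon of Figure~\ref{hexagonReconstruction}, which has three cusps and sum $4\pi$, and on Figure~\ref{katy}, to fix which of the adjacent angles is meant.

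\emph{Sufficiency.} Assume \eqref{eq:reconstruction}. By Theorem~\ref{thmReconstruction} we obtain a one-parameter family of closed chains with parallel opposite sides, all having the given Wigner caustic and centre symmetry set. By Proposition~\ref{2.23} and \eqref{eq:wcthesame}, \eqref{eq:cssthesame}, this family is exactly the family of affine equidistants $\Eq_\lambda(\mathcal{Q})$ of one such chain $\mathcal{Q}$, with the starting point playing the role of $\lambda$. From the angle data set $\theta_i:=\pi-\alpha_i$; by \eqref{eq:anglesSum} these sum to $\pi$.

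I would then argue that the lines of the edges of $\mathcal{Q}$, namely the lines parallel to consecutive caustic edges, can be oriented so that consecutive directions turn by $\theta_i$ counterclockwise. After $n$ steps they turn by $\pi$, and continuing with the antiparallel opposite edges closes the turning at $2\pi$. Then I let $|\lambda|\to\infty$ with the appropriate sign. By \eqref{equation2 krawedzie remark},
\[
e_i(\lambda)=\frac{\lambda_i-\lambda}{\lambda_i}\,e_i ,
\]
so for large $|\lambda|$ all factors $\lambda_i-\lambda$ have the same sign and no cusps remain (Proposition~\ref{cusp_equivalence}(c)). Hence all edges carry the consistent orientation, the exterior angles at the vertices are the $\theta_i$, and the chain is a convex $2n$-gon with parallel opposite sides. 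Conditions (2) and (3) hold provided $0<\theta_i<\pi/2$, that is, $\alpha_i\in(\pi/2,\pi)$.

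\emph{Main obstacle.} The hardest step is the sufficiency bookkeeping. One has to show that \eqref{eq:anglesSum}, together with the odd number of cusps, forces every $\theta_i=\pi-\alpha_i$ into $(0,\pi/2)$ and forces the orientations to be coherent, rather than merely fixing the sum. If this fails in general, I would first try to restrict to the case in which each $\alpha_i$ is read off on the side where the modified exterior-angle convention applies, and prove the acute-angle condition vertex by vertex using the similar triangles from the proof of Theorem~\ref{thmReconstruction}.
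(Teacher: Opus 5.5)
Your necessity argument is fine. It is the paper's count written with turning angles: the paper observes that each $\alpha_i$ is an interior angle of the reconstructed polygon, occurring at $P_i$ and at $P_{i+n}$, and uses the angle sum $(2n-2)\pi$ of a simple $2n$-gon. Your cusp/non-cusp bookkeeping supplies what the paper leaves to Figure~\ref{katy}. Sufficiency follows the paper's route too: pick a member of the equidistant family whose edges are coherently oriented. However, the decisive step is missing, and the reason you give for it is invalid. Proposition~\ref{cusp_equivalence} presupposes that the base polygon is a CPPOS ($\lambda_i>0$, $[e_{i-1},e_i]>0$, $\langle e_{i-1},e_i\rangle>0$), which is exactly what is to be shown. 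For a general reconstructed chain the $\lambda_i$ may have either sign, and \eqref{equation2 krawedzie remark} only says that for $|\lambda|\to\infty$ the edge $e_i(\lambda)$ is a positive multiple of $-\operatorname{sign}(\lambda)\,e_i/\lambda_i$.

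To see what decides coherence, put $W_i=\frac12(P_i+P_{i+n})$, $Q_i=\frac12(P_i-P_{i+n})$ (not your $\mathcal Q$), $\eta_i=1-2\lambda_i$ and $w_i=W_{i+1}-W_i$. Then $D_i=W_i+\eta_iQ_i=W_{i+1}+\eta_iQ_{i+1}$, so $w_i=-\eta_i(Q_{i+1}-Q_i)$. The family of all reconstructions is $W_i+tQ_i$, $t\in\mathbb R$, and its edges are $(1-t/\eta_i)\,w_i$. For large $|t|$, the inner product of consecutive edges of this member has the sign of $\eta_{i-1}\eta_i\langle w_{i-1},w_i\rangle$. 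So you need $\eta_{i-1}\eta_i<0$ exactly at the cusps of the caustic. Since $D_{i-1}=W_i+\eta_{i-1}Q_i$ and $D_i=W_i+\eta_iQ_i$, this says: $W_i$ lies strictly between the two vertices of the centre symmetry set on its great diagonal if and only if $W_i$ is a cusp. This compatibility is necessary (apply Proposition~\ref{cusp_equivalence}(c) with $\lambda=0.5$ to a CPPOS member, if one exists). But it is information about the centre symmetry set relative to the caustic, and neither \eqref{eq:anglesSum} nor the absolute-value identity \eqref{eq:reconstruction} detects it. So the obstacle you flag cannot be overcome from the stated hypotheses. The paper's proof passes over the same point by simply asserting that a suitable initial point exists.

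Here is a concrete counterexample.
\begin{itemize}
\item Let $n=3$ and let $W_1W_2W_3$ be an equilateral triangle: three cusps, $\alpha_i=2\pi/3$, so \eqref{eq:anglesSum} holds.
\item Take $\eta_1=\frac12$, $\eta_2=\frac13$, $\eta_3=\frac14$, $\eta_{i+3}=-\eta_i$.
\item Define $Q_1$ by $2Q_1=\sum_{i=1}^{3}w_i/\eta_i$ and $Q_{i+1}=Q_i-w_i/\eta_i$; this gives $Q_4=-Q_1$ and all $Q_i\neq 0$. Set $D_i=W_i+\eta_iQ_i$.
\end{itemize}
These data come from an honest closed chain with parallel opposite sides, so \eqref{eq:reconstruction} holds with all $|a_i|>0$. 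Yet every reconstruction has edges $(1-t/\eta_i)w_i$ with $\langle w_{i-1},w_i\rangle<0$ at each vertex. Condition~(3) of Definition~\ref{CPPOS definition} would then require $(1-2t)(1-3t)<0$ and $(1-3t)(1-4t)<0$, i.e.\ $t\in(\frac13,\frac12)\cap(\frac14,\frac13)=\emptyset$. No member is a CPPOS.

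Suppose you add the compatibility hypothesis and read $\alpha_i$ as the obtuse angle between consecutive edge lines of the caustic. Then $\theta_i=\pi-\alpha_i\in(0,\pi/2)$ automatically, which removes the other half of your obstacle, and your $|\lambda|\to\infty$ idea does finish the proof:
\begin{itemize}
\item The chain with edges $q_i=-w_i/\eta_i$ turns by $\pm\theta_i$ at each vertex.
\item Its first $n$ turns sum to an odd multiple of $\pi$, because $q_{i+n}=-q_i$.
\item Since $\sum\theta_i=\pi$, all signs must agree, so this chain is convex with obtuse angles.
\item For $|t|>\max_i|\eta_i|$, the edges of $W_i+tQ_i$ are positive multiples of $\operatorname{sign}(t)\,q_i$, so that member is a CPPOS (up to reversing orientation).
\end{itemize}
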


\begin{proof}
The angles $\alpha_i$ are not only exterior angles of the potential Wigner caustic but also angles of the reconstructed closed polygonal chain. This follows from the~fact that the sides of the reconstructed polygon are parallel to the sides of the~Wigner caustic.

A $\cppos$ is a simple polygon with $2n$ vertices, and therefore, the sum of its interior angles must be equal to $(2n-2)\cdot\pi$. Consequently, suppose that equality~\eqref{eq:anglesSum} holds. By varying the initial point in the reconstruction procedure, we obtain a one-parameter family of closed polygonal chains, whose members are affine equidistants of one another. The initial point can be chosen so that the consecutive edges follow the cyclic order of their prescribed directions. For this choice, all turning angles have the same orientation and, by \eqref{eq:anglesSum}, their total turning is $2\pi$. Hence, the resulting polygonal chain is a simple convex polygon. Since its corresponding opposite sides are parallel by construction, it is a CPPOS.
\end{proof}

\begin{remark}
A point belonging to the polygon that is being reconstructed is necessary for a unique reconstruction, since the centre symmetry set and the Wigner caustic are the same for the entire family of affine equidistants (see equations \eqref{eq:wcthesame} and \eqref{eq:cssthesame}).
\end{remark}

\begin{remark}
Recall that the centre symmetry set is formed by singular
points of the family of affine equidistants. Consequently,
the reconstruction yields a $\cppos$ if and only if the
conditions stated in Theorem~\ref{thm:angleRecon} hold and, in addition,
the~reconstructed polygon and its centre symmetry set
have no common points.
\end{remark}

Thus, we have successfully completed the procedure of determining the necessary conditions for the reconstruction of convex polygons with parallel corresponding sides from their Wigner caustic and centre symmetry set. 

%%%%%%%%%%%%%%%%%%%%%%%%%%%%%%%%%%%%%%%%%%%%%%%%%%%%%
%%%%%%%%%%%%%%%%%%%%%%%%%%%%%%%%%%%%%%%%%%%%%%%%%%%%%%
%%%%%%%%%%%%%%%%%%%%%%%%%%%%%%%%%%%%%%%%%%%%%%%%%%%%%%

\section{Isoperimetric-Type Inequalities}

\noindent
We begin this section by recalling the classical isoperimetric inequality for polygons. If $\mathcal{P}$ is an $m$-sided convex polygon, then
\begin{align}
\label{eq:classIsoIneqPolygons}
L(\mathcal{P})^2\geqslant 4m\tan\left(\frac{\pi}{m}\right) A(\mathcal{P}),
\end{align}
where $L(\mathcal{P})$ denotes the perimeter of $\mathcal{P}$ and $A(\mathcal{P})$ denotes the area of the region enclosed by $\mathcal{P}$. Moreover, equality holds if and only if $\mathcal{P}$ is a regular polygon. Passing to the limit $m\to\infty$, one recovers the classical isoperimetric inequality for smooth planar curves.

Isoperimetric inequalities for planar objects have attracted considerable attention in recent years, leading to a wide range of new results for smooth curves, convex bodies, and various generalized geometric structures (see \cite{bogosel, Cufi, CentroidIneq, PanShu, isoBook, Safarewicz, SC1, Zhang1, Zwierz1, Zwierz3, Zwierz2, ZwierzMix}). In this context, discrete models provide more than merely finite-dimensional analogues of smooth problems. They may reveal additional combinatorial and geometric features that are difficult to detect in the continuous setting, while at the same time offering effective tools for numerical experiments and for testing possible conjectures. Thus, the discretization considered here may contribute not only to a better understanding of the classical smooth theory, but also to the discovery of new properties and inequalities that may subsequently prove useful in the study of planar geometric objects.

In what follows, we prove a refinement of this inequality for convex polygons with parallel opposite sides. The additional term, which appears in the estimate, is expressed in terms of the oriented area of the Wigner caustic. Thus, in the class of $\cppos$es, the isoperimetric deficit is controlled not only by the area enclosed by the polygon, but also by the affine geometry encoded in its equidistant at level~$0.5$. This gives a discrete analogue of the improved isoperimetric inequality known for smooth ovals and shows that the Wigner caustic naturally measures part of the deviation from the extremal case.

The proofs of Theorem~\ref{thm:discreteImpIsoIneq} and Theorem~\ref{thmMaxAreaWC} make use of another natural construction associated with a convex body $K$, which is its central symmetral: \linebreak
$\frac{1}{2}\bigl(K+(-K)\bigr)$,
while the unnormalized body $K+(-K)=K-K$ is usually called the~difference body of $K$. It is also an example of a secant caustic (see \cite{Romero}). The~central symmetral is an origin-symmetric convex body whose support function is equal to one half of the width function of $K$. Consequently, it has the same width function as $K$, and, in the planar case, the same perimeter. On the other hand, the Brunn--Minkowski and Rogers--Shephard inequalities imply
\begin{align}
\label{BrunnMinkRogShepIneq} A(K)\leqslant A\left(\frac{K-K}{2}\right)\leqslant \frac{3}{2}A(K).
\end{align}
Thus, Minkowski symmetrization preserves the perimeter while increasing the enclosed area, which makes it particularly natural in the study of isoperimetric inequalities and geometric measures of asymmetry (see, for instance, \cite{CentroidIneq, SchneiderBook, ZwierzMix}).

\begin{definition}
We call a $\cppos$ a \textit{constant width $\cppos$} (or a $\cppos$ of constant width) if the distance between every pair of opposite sides is the same. 

Notice that such a polygon need not be a body of constant width in the usual sense. Nevertheless, by the construction described in the second section (see \linebreak Remark \ref{remConstruction}), constant width $\cppos$es arise as polygonal approximations of \linebreak bodies of constant width.
\end{definition}

\begin{definition}
Let $\mathcal{P}=(P_1,\ldots,P_m)$ be a closed oriented polygonal chain in $\mathbb{R}^2$, where
$P_i=(x_i,y_i)$ for $i=1,\ldots,m$,
and let $P_{m+1}=P_1$. The \emph{oriented area} of $\mathcal{P}$ is defined by
\begin{equation}
\label{eq:oriented-area-polygon}
A^\ast(\mathcal{P})
=
\frac{1}{2}
\sum_{i=1}^{m}
\left(
x_i y_{i+1}-x_{i+1}y_i\right)
=
\frac{1}{2}
\sum_{i=1}^{m}
\det(P_i,P_{i+1}).
\end{equation}
Formula~\eqref{eq:oriented-area-polygon} is commonly known as the
\emph{shoelace formula}.
\end{definition}

The sign of $A^\ast(\mathcal{P})$ depends on the orientation of the
polygonal chain. If $\mathcal{P}$ is a simple polygon oriented
counterclockwise, then
$A^\ast(\mathcal{P})=A(\mathcal{P}),$ whereas, for the clockwise orientation,
$A^\ast(\mathcal{P})=-A(\mathcal{P})$, where $A(\mathcal{P})$ denotes the area enclosed by~$\mathcal{P}$. For a self-intersecting polygonal chain, $A^\ast(\mathcal{P})$ is the
signed area counted with multiplicity according to the winding number.

\begin{remark}
    The proof of Theorem \ref{thm:convergence} yields convergence with respect to the~natural cyclic parametrizations. On every compact regular subarc, this convergence is of class $C^1$, while the parts contained in sufficiently small neighbourhoods of the~cusps have arbitrarily small length. Consequently, with the same assumptions as in Theorem \ref{thm:convergence},
\begin{align*}
    L\big(\ppos_{\nu_k}(\Eq_\lambda(M))\big)\to L\big(\Eq_{\lambda}(M)\big)
    \quad\text{and}\quad
    A^\ast\big(\ppos_{\nu_k}(\Eq_\lambda(M))\big)\to A^\ast\big(\Eq_{\lambda}(M)\big).
\end{align*}
The same holds for the oriented area of the centre symmetry set.
\end{remark}

\begin{thm}\label{thm:discreteImpIsoIneq}
Let \(\mathcal P\) be a $\cppos$ with $2n$ vertices. Then
\[
L(\mathcal P)^2
\geqslant
8n\tan\left(\frac{\pi}{2n}\right)\cdot 
\big(
A(\mathcal P)
+
2\left|A^\ast\left(E_{0.5}(\mathcal P)\right)\right|
\big).
\]
Moreover, equality holds if and only if \(\mathcal P\) is a $\cppos$ of constant width and has equally spaced side directions.
\end{thm}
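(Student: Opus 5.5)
The plan is to pass from $\mathcal P$ to its central symmetral
\[
\mathcal Q:=\tfrac12\bigl(\mathcal P+(-\mathcal P)\bigr),
\]
which is again a convex $2n$-gon. Its edges, taken in the directions of $\nu$, are $\tfrac12(e_i-e_{i+n})$. Because $e_{i+n}$ is a negative multiple of $e_i$ by \eqref{eq:homothety}, each such edge is parallel to $e_i$ and has length $\tfrac12(|e_i|+|e_{i+n}|)$. Summing over $i$ gives $L(\mathcal Q)=L(\mathcal P)$, as recalled before the definition of constant width $\cppos$es. Applying the classical inequality \eqref{eq:classIsoIneqPolygons} to $\mathcal Q$ with $m=2n$ yields
\[
L(\mathcal P)^2=L(\mathcal Q)^2\geqslant 8n\tan\left(\frac{\pi}{2n}\right)A(\mathcal Q).
\]
The theorem therefore reduces to the identity
\[
A(\mathcal Q)=A(\mathcal P)+2\left|A^\ast\left(\Eq_{0.5}(\mathcal P)\right)\right|
\]
together with an analysis of the equality case.

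To prove the identity I would use the mixed area $A(\cdot,\cdot)$ of convex polygons. Bilinearity and $A(-\mathcal P)=A(\mathcal P)$ give
\[
A(\mathcal Q)=\tfrac14\bigl(2A(\mathcal P)+2A(\mathcal P,-\mathcal P)\bigr).
\]
The Wigner caustic is the polygonal hedgehog with support function $\tfrac12(h_{\mathcal P}(u)-h_{\mathcal P}(-u))$; equivalently, its edges are $e_i(0.5)=\tfrac12(e_i+e_{i+n})$ by \eqref{equation1 krawedzie remark}. The oriented area of a hedgehog is a quadratic form in its support function that extends the mixed-area form. Hence the closed chain traversed over all $2n$ edges has oriented area
\[
\tfrac14\bigl(2A(\mathcal P)-2A(\mathcal P,-\mathcal P)\bigr).
\]
This doubled traversal covers $\Eq_{0.5}(\mathcal P)$ twice, since $P_{i+n}(0.5)=P_i(0.5)$. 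Therefore
\[
A^\ast\left(\Eq_{0.5}(\mathcal P)\right)=\tfrac14\bigl(A(\mathcal P)-A(\mathcal P,-\mathcal P)\bigr).
\]
Minkowski's inequality gives $A(\mathcal P,-\mathcal P)\geqslant A(\mathcal P)$, so this quantity is non-positive. It follows that $A(\mathcal P,-\mathcal P)=A(\mathcal P)+4|A^\ast(\Eq_{0.5}(\mathcal P))|$, and substituting yields the claimed identity.

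To keep the argument elementary and self-contained, I would also check this directly via the shoelace formula \eqref{eq:oriented-area-polygon}. For a closed chain with edge vectors $f_i$ one can write $2A^\ast=\sum_{i<j}[f_i,f_j]$. Applying this to $\mathcal P$, to $\mathcal Q$ (edges $\tfrac12(e_i-e_{i+n})$) and to the doubled caustic (edges $\tfrac12(e_i+e_{i+n})$), the cross terms $[e_i,e_{j+n}]$ should appear with opposite signs in $\mathcal Q$ and in the caustic. The terms $[e_i,e_j]$ and $[e_{i+n},e_{j+n}]$ should reproduce $A(\mathcal P)$ in both. Adding the two expansions should give $A(\mathcal Q)+2A^\ast(\Eq_{0.5}(\mathcal P))=A(\mathcal P)$.

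I expect this identity to be the main obstacle. The difficulties are the bookkeeping of cyclic index ranges, the factor $2$ from the double traversal, and above all fixing the sign of $A^\ast(\Eq_{0.5}(\mathcal P))$ so that the absolute value enters correctly.

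For the equality case, equality in \eqref{eq:classIsoIneqPolygons} holds if and only if $\mathcal Q$ is a regular $2n$-gon. The side directions of $\mathcal Q$ are those of $\mathcal P$, so regularity forces equally spaced directions. Since the support function of $\mathcal Q$ is half the width function of $\mathcal P$, regularity also forces all widths between opposite sides of $\mathcal P$ to be equal, that is, $\mathcal P$ has constant width. Conversely, suppose the directions are equally spaced and all widths equal some $w$. Then $\mathcal Q$ is a centrally symmetric $2n$-gon with equally spaced normals and every support value $w/2$, and this is the regular $2n$-gon. Hence equality holds exactly when $\mathcal P$ is a $\cppos$ of constant width with equally spaced side directions.
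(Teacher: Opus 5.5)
Your proposal is correct and follows the paper's proof: you pass to the central symmetral $\mathcal Q$, use $L(\mathcal Q)=L(\mathcal P)$ and the identity $A(\mathcal Q)=A(\mathcal P)+2|A^\ast(\Eq_{0.5}(\mathcal P))|$, apply the classical inequality for $2n$-gons, and read off the equality case from regularity of $\mathcal Q$. The only difference is in how the area identity is derived. The paper splits vertices as $P_i=W_i+Q_i$, and the cross terms cancel because $W_{i+n}=W_i$ and $Q_{i+n}=-Q_i$; this is the same computation as your edge-wise shoelace check. Your primary mixed-area argument additionally makes the sign $A^\ast(\Eq_{0.5}(\mathcal P))\leqslant 0$ explicit through Minkowski's inequality, whereas the paper uses it implicitly via the Brunn--Minkowski bound $A(\mathcal P)\leqslant A(\mathcal Q)$.
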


\begin{proof}
Let
$W_i:=P_i(0.5)=\frac{P_i+P_{i+n}}2$
be the vertices of the Wigner caustic. We also define an auxiliary polygon \(\mathcal Q\) by the vertices
$Q_i:=\frac{P_i-P_{i+n}}2$. Thus, $Q$ is the boundary of the central symmetral $\frac{1}{2}(K-K)$ of the convex body $K$ bounded by $\mathcal P$.
Then
$P_i=W_i+Q_i$.

Denote the edges of \(\mathcal Q\) by
$q_i:=Q_{i+1}-Q_i$.
Then
\begin{equation}
q_i
=
\frac{P_{i+1}-P_{i+n+1}-P_i+P_{i+n}}2
=
\frac{e_i-e_{i+n}}2.
\label{eq:Q-edge-first}
\end{equation}
Since \(\mathcal P\) is a $\cppos$, the opposite edges \(e_i\) and \(e_{i+n}\) are parallel and oppositely oriented. Hence, there exists \(\alpha_i>0\) such that
\[
e_{i+n}=-\alpha_i e_i.
\]
Therefore, by \eqref{eq:Q-edge-first},
\begin{equation}
q_i=\frac{1+\alpha_i}{2}e_i.
\label{eq:Q-edge-positive-multiple}
\end{equation}
Thus, every edge of \(\mathcal Q\) is a positive multiple of the corresponding edge of \(\mathcal P\). Since \(\mathcal P\) is convex, it follows that \(\mathcal Q\) is convex as well.

Moreover, from \eqref{eq:Q-edge-positive-multiple} we get
\[
|q_i|=\frac{|e_i|+|e_{i+n}|}{2}.
\]
Hence,
\begin{equation}
L(\mathcal Q)
=
\sum_{i=1}^{2n}|q_i|
=
\frac12\sum_{i=1}^{2n}|e_i|
+
\frac12\sum_{i=1}^{2n}|e_{i+n}|
=
L(\mathcal P).
\label{eq:Q-perimeter}
\end{equation}

We now compare the oriented areas. Since $P_i=W_i+Q_i$, we get
\begin{align*}
A^*(\mathcal P)
&=
\frac12\sum_{i=1}^{2n}
[W_i+Q_i,W_{i+1}+Q_{i+1}] \\
&=
\frac12\sum_{i=1}^{2n}
\bigl([W_i,W_{i+1}]+[W_i,Q_{i+1}]\bigr)
+
\frac12\sum_{i=1}^{2n}
\bigl([Q_i,W_{i+1}]+[Q_i,Q_{i+1}]\bigr) \\
&=
\frac12\sum_{i=1}^{2n}[W_i,W_{i+1}]
+
\frac12\sum_{i=1}^{2n}[Q_i,Q_{i+1}]
+
\frac12\sum_{i=1}^{2n}
\bigl([W_i,Q_{i+1}]+[Q_i,W_{i+1}]\bigr).
\end{align*}

We claim that the mixed terms vanish. Indeed,
\[
W_{i+n}=W_i
\quad \text{and} \quad
Q_{i+n}=-Q_i,
\]
so the mixed term corresponding to \(i+n\) is the negative of the mixed term
corresponding to \(i\). Thus, 
\begin{equation}\label{eq:area-P-W-Q}
A^*(\mathcal P)=A^*(W^{(2)})+A^*(\mathcal Q),
\end{equation}
where $W^{(2)}$ denotes the polygonal chain $W_1,W_2,\ldots,W_{2n}$.

Since \(W_{i+n}=W_i\), the chain \(W^{(2)}\) goes twice around \(E_{0.5}(\mathcal P)\). Therefore, 
\[
A^\ast(W^{(2)})
=
2A^\ast\left(E_{0.5}(\mathcal P)\right).
\]
By \eqref{eq:area-P-W-Q}, we get
\[
A^\ast(\mathcal Q)
=
A^\ast(\mathcal P)
-
2A^\ast\left(E_{0.5}(\mathcal P)\right).
\]

Since \(\mathcal P\) and \(\mathcal Q\) are positively oriented convex polygons, we have
\[
A^\ast(\mathcal P)=A(\mathcal P),
\qquad
A^\ast(\mathcal Q)=A(\mathcal Q).
\]
Using also
\[
A^\ast\left(E_{0.5}(\mathcal P)\right)\leqslant 0,
\]
we obtain
\begin{align}
\label{eq:Q-area}
A(\mathcal Q)
=
A(\mathcal P)
+
2\left|A^\ast\left(E_{0.5}(\mathcal P)\right)\right|.
\end{align}

By the classical isoperimetric inequality for \(2n\)-gons,
\begin{equation}
L(\mathcal Q)^2
\geqslant
8n\tan\left(\frac{\pi}{2n}\right)A(\mathcal Q),
\label{eq:classical-isoperimetric-Q}
\end{equation}
with equality if and only if \(\mathcal Q\) is a regular \(2n\)-gon. Using \eqref{eq:Q-perimeter} and \eqref{eq:Q-area} in \eqref{eq:classical-isoperimetric-Q}, we obtain
\[
L(\mathcal P)^2
\geqslant
8n\tan\left(\frac{\pi}{2n}\right)
\cdot\big(
A(\mathcal P)
+
2\left|A^\ast\left(E_{0.5}(\mathcal P)\right)\right|
\big),
\]
which proves the inequality.

The equality case follows from the equality case in \eqref{eq:classical-isoperimetric-Q}.
Equality holds if and only if $\mathcal{Q}$ is a regular $2n$-gon. Indeed, $\mathcal{Q}$ and $\mathcal P$ have the same side directions, while central symmetrization preserves widths. Hence, if $\mathcal{Q}$ is regular, then $\mathcal P$ is equiangular and has constant width. Conversely, if $\mathcal P$ is equiangular and has constant width, then $Q$ is regular.
\end{proof}

In Figure \ref{fig:pentagramOfCW} we illustrate an equiangular constant width $\cppos$ $\mathcal{P}$, for $n=5$, with the following ten vertices (coordinates are rounded to three decimal places):
$$
(26.349, 10.887),
(2.279, 28.374),
(-2.549, 28.374),
(-26.131, 11.241),$$ $$
(-27.680, 6.472),
(-18.522, -21.714),
(-14.860, -24.374),$$ $$
(14.590, -24.374),
(18.740, -21.359),
(27.783, 6.472).
$$

\begin{figure}[h!]
\centering
\includegraphics[width=0.50\linewidth]{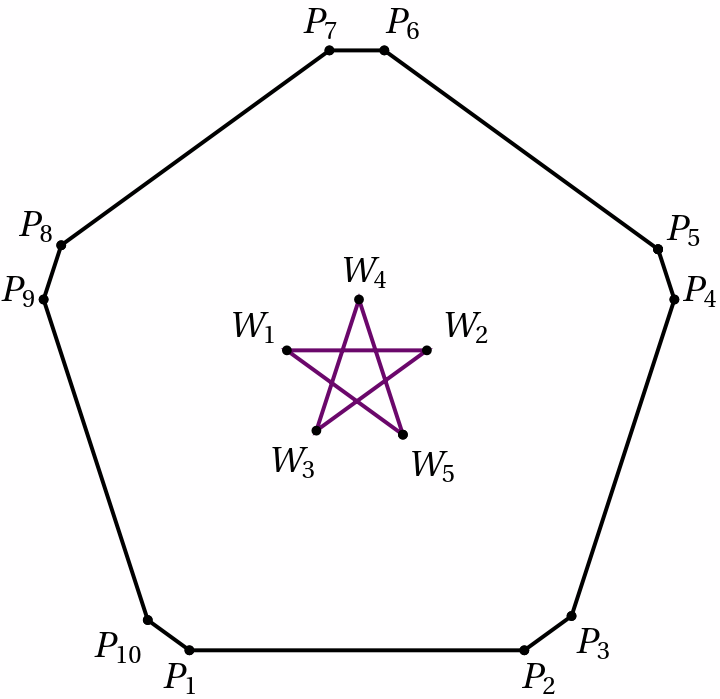}
\caption{Equiangular $\cppos$ of constant width and its Wigner caustic}
\label{fig:pentagramOfCW}
\end{figure}

\noindent By direct calculations, we get the following quantities, rounded to three decimal places:
\begin{align*}
    &L(\mathcal{P})\approx 171.391,\quad
    A(\mathcal{P})\approx 2137, \quad
    A^*\left(\Eq_{0.5}(\mathcal{P})\right)\approx -61.587,\\
    &L(\mathcal{P})^2-8\cdot5\tan\left(\frac{\pi}{2\cdot5}\right)\cdot\big(A(\mathcal{P})+2\left|A^*\left(\Eq_{0.5}(\mathcal{P}\right)\right|\big)\approx 0.
\end{align*}

\begin{cor}[Theorem 3.4 in \cite{Zwierz1}]\label{cor:IsoIneqOval}
    Let $\mathcal{O}$ be an oval. Then
    $$L(\mathcal{O})^2\geqslant 4\pi A(\mathcal{O})+8\pi\left| A^*\left(\Eq_{0.5}(\mathcal{O})\right)\right|,$$
    and the equality holds if and only if $\mathcal{O}$ is a curve of constant width.
\end{cor}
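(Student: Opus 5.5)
The plan is to pass to the limit in Theorem~\ref{thm:discreteImpIsoIneq}, using $\cppos$ approximations of $\mathcal{O}$ built from refining sets of directions. I take a sequence of allowable sets $\nu_k$ with $n_k$ directions, equally spaced, so that $\operatorname{mesh}(\nu_k)=\pi/n_k\to0$. Put $\mathcal{P}_k:=\ppos_{\nu_k}(\mathcal{O})$. By Remark~\ref{remConstruction}, each $\mathcal{P}_k$ is a $\cppos$ with $2n_k$ vertices. Applying Theorem~\ref{thm:discreteImpIsoIneq} to $\mathcal{P}_k$ gives
\[
L(\mathcal P_k)^2\geqslant 8n_k\tan\left(\frac{\pi}{2n_k}\right)\big(A(\mathcal P_k)+2|A^\ast(\Eq_{0.5}(\mathcal P_k))|\big).
\]
Since $8n_k\tan(\pi/(2n_k))\to4\pi$, it remains to establish the three convergences
\[
L(\mathcal P_k)\to L(\mathcal O),\qquad A(\mathcal P_k)\to A(\mathcal O),\qquad A^\ast(\Eq_{0.5}(\mathcal P_k))\to A^\ast(\Eq_{0.5}(\mathcal O)).
\]

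The first two follow from convergence of circumscribed polygons to a convex curve. Alternatively, they are the case $\lambda=0$ of the remark following Theorem~\ref{thm:convergence}. For the third, Theorem~2.12 gives $\Eq_{0.5}(\mathcal P_k)=\ppos_{\nu_k}(\Eq_{0.5}(\mathcal O))$, and the remark after Theorem~\ref{thm:convergence} then gives convergence of the oriented areas. That remark assumes $\mathcal{O}$ is generic and that the Wigner caustic has only cusp singularities. To remove this assumption, I would instead argue directly, as in the proof of Theorem~\ref{thm:discreteImpIsoIneq}. The smooth decomposition $\mathcal O=W+Q$ holds, where $Q$ is the boundary of the central symmetral $\frac12(K-K)$, and it gives
\[
2A^\ast(\Eq_{0.5}(\mathcal O))=A(\mathcal O)-A(Q).
\]
Here $Q_k:=\frac12(\mathcal{P}_k-\mathcal{P}_k)$ is exactly the circumscribed $\ppos_{\nu_k}$ polygon of $Q$, so $A(Q_k)\to A(Q)$ without any genericity assumption. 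Passing to the limit yields the inequality.

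For the equality case, limits of strict inequalities are useless, so I would argue directly in the smooth setting. By the identity above, the inequality reads $L(Q)^2\geqslant4\pi A(Q)$, using $L(Q)=L(\mathcal O)$, which holds because the width functions coincide. Equality in the classical isoperimetric inequality forces $Q$ to be a circle. This means the width function of $\mathcal{O}$ is constant, that is, $\mathcal O$ has constant width. Conversely, if $\mathcal{O}$ has constant width, then $Q$ is a circle of perimeter $L(\mathcal{O})$ and equality holds.

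The main obstacle is justifying the smooth identity and the sign $A^\ast(\Eq_{0.5}(\mathcal O))\leqslant0$. I would first prove the identity by the same shoelace-type computation as in the proof of Theorem~\ref{thm:discreteImpIsoIneq}, now with integrals over the parametrization by the tangent direction: write $\gamma(\theta)=w(\theta)+q(\theta)$ with $w(\theta+\pi)=w(\theta)$ and $q(\theta+\pi)=-q(\theta)$. The mixed terms then cancel under $\theta\mapsto\theta+\pi$, exactly as in the discrete case. As a backup, one can obtain the identity as the limit of the discrete identity \eqref{eq:Q-area}.
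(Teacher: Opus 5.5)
Your proof is correct, but it takes a different route from the paper. The paper gets the corollary only as the formal limit of Theorem~\ref{thm:discreteImpIsoIneq} along circumscribed $\cppos$es. That limit relies on the commutation $\ppos_{\nu}(\Eq_{0.5}(\mathcal O))=\Eq_{0.5}(\ppos_{\nu}(\mathcal O))$ and on the convergence remark preceding Theorem~\ref{thm:discreteImpIsoIneq}, both of which carry the genericity hypotheses of Theorem~\ref{thm:convergence}. The equality case is not addressed by the limit; it is taken from the Fourier-series proof in \cite{Zwierz1}, and as you observe, a limit of strict inequalities could not give it.

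You instead transplant the decomposition $P_i=W_i+Q_i$ from the proof of Theorem~\ref{thm:discreteImpIsoIneq} to the smooth curve. This gives $2A^\ast(\Eq_{0.5}(\mathcal O))=A(\mathcal O)-A(Q)$ with $Q=\partial\frac12(K-K)$. Your observation that the central symmetral of $\ppos_{\nu_k}(\mathcal O)$ is exactly $\ppos_{\nu_k}(Q)$ (the support functions agree at the normals determined by $\nu_k$) removes every genericity assumption. This buys a statement valid for all ovals together with a genuine proof of the equality case.

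Note, however, that once you have the smooth identity and $L(Q)=L(\mathcal O)$ (Cauchy's formula), the whole corollary is just $L(Q)^2\geqslant 4\pi A(Q)$, with equality iff $Q$ is a circle, exactly as in your last step. So the approximation by $\mathcal P_k$ and the appeal to Theorem~\ref{thm:discreteImpIsoIneq} become redundant: your argument is really the smooth counterpart of the discrete proof, not a limit of its statement.

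Likewise, the sign $A^\ast(\Eq_{0.5}(\mathcal O))\leqslant 0$, which you list as an obstacle, follows at once from the identity and the Brunn--Minkowski bound $A(Q)\geqslant A(\mathcal O)$ in \eqref{BrunnMinkRogShepIneq}. What the paper's route offers in exchange is an illustration of the discrete-to-smooth passage: the smooth inequality appears as the limit of the stronger polygonal one with constant $8n\tan(\pi/2n)$.
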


\begin{remark}
In the limiting case of Theorem~\ref{thm:discreteImpIsoIneq}, as the number of vertices tends to infinity, the above inequality yields the improved isoperimetric inequality for a smooth oval involving its Wigner caustic (see Corollary~\ref{cor:IsoIneqOval}). This result was originally proved by means of a Fourier-series approach -- see Theorem~3.4 in \cite{Zwierz1}.

It is worth noting that, in the special case of equiangular $\cppos$es, Theorem~\ref{thm:discreteImpIsoIneq} can also be proved by means of discrete Fourier series; see, for instance, the modern treatment in \cite{kwong}, the earlier approach developed in \cite{PerpPolygon}, and the references therein. We do not include this proof here, since the argument presented in the proof of Theorem \ref{thm:discreteImpIsoIneq} is simpler and applies in the more general setting, without assuming equiangularity. Nevertheless, the discrete Fourier-series point of view suggests that the strategy of first proving a discrete version of a result and then passing to the~smooth limit may be a fruitful and flexible approach. This indicates that such a~discrete-to-smooth passage may provide a useful framework for obtaining further results of this type.
\end{remark}

Now, we will find relations between oriented areas of the Wigner caustic, the~centre symmetry set, and the area of the initial $\cppos$.

\begin{thm}\label{thmMaxAreaWC}
Let \(\mathcal P\) be a $\cppos$ with $2n$ vertices.
Then
\begin{align*}
0\leqslant\left|A^*(\Eq_{0.5}(\mathcal P))\right|
<
\frac14 A(\mathcal P)
\quad\text{and}\quad
0\leqslant\left|A^*(\Eq_{0.5}(\mathcal P))\right|\leqslant
\left|A^*(\Css(\mathcal{P}))\right|<A(\mathcal{P}).
\end{align*}
Moreover, the equality cases in the above non-strict inequalities hold if and only if $\mathcal{P}$ is centrally symmetric. Furthermore, strict inequalities are sharp -- more precisely,
\[
\sup_{\mathcal P}
\frac{\left|A^*(\Eq_{0.5}(\mathcal P))\right|}
{A(\mathcal P)}
=
\frac14
\quad\text{and}\quad 
\sup_{\mathcal P}
\frac{\left|A^*(\Css(\mathcal P))\right|}
{A(\mathcal P)}
=
1,
\]
where the supremum is taken over all $\cppos$es with $2n$ vertices.
\end{thm}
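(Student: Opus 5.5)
The plan is to reuse the auxiliary polygon $\mathcal Q$ with vertices $Q_i=\frac{P_i-P_{i+n}}2$ from the proof of Theorem~\ref{thm:discreteImpIsoIneq}. There we showed that $\mathcal Q$ bounds the central symmetral $\frac12(K-K)$ of the body $K$ enclosed by $\mathcal P$, and that
\[
A(\mathcal Q)=A(\mathcal P)+2\left|A^\ast(\Eq_{0.5}(\mathcal P))\right| .
\]
Feeding this identity into \eqref{BrunnMinkRogShepIneq} gives
\[
0\leqslant 2\left|A^\ast(\Eq_{0.5}(\mathcal P))\right|\leqslant \tfrac12 A(\mathcal P).
\]
The left inequality is an equality exactly in the Brunn--Minkowski equality case $K=-K+v$, i.e.\ when $\mathcal P$ is centrally symmetric. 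Directly, that is the case $W_i\equiv$ const, so the Wigner caustic degenerates to a point. The right inequality is an equality exactly in the Rogers--Shephard equality case, i.e.\ when $K$ is a triangle. A $\cppos$ has $2n\geqslant 6$ vertices and obtuse angles, so it is never a triangle, and the bound $\frac14$ is strict.

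For sharpness of $\frac14$ I will build explicit $\cppos$es that converge to a triangle in the Hausdorff metric. For $n=3$ take a hexagon obtained by cutting the corners of a triangle by tiny segments parallel to the opposite sides. Its opposite sides are parallel, its interior angles are $120^\circ$-type and hence obtuse, and as the cuts shrink it tends to the triangle. For general $n$, additionally insert $n-3$ pairs of very short parallel edges near the vertices with directions in between, keeping all angles obtuse. Alternatively, take $\ppos_\nu(\mathcal O_k)$ for smooth ovals $\mathcal O_k$ approximating a triangle. The area of $\mathcal Q$ depends continuously on $K$, as a Minkowski combination, and so does $A(\mathcal P)$. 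Hence the ratio $A(\mathcal Q)/A(\mathcal P)$ tends to $\frac32$, and the ratio in the statement tends to $\frac14$.

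For the centre symmetry set the plan is to compare $\Eq_{0.5}(\mathcal P)$ and $\Css(\mathcal P)$ through their interleaving on the great diagonals. Both $W_i$ and $D_{i+1}$ lie on the diagonal $P_iP_{i+n}$, and $D_{i+1},W_{i+1}$ lie on $P_{i+1}P_{i+n+1}$. Inserting $D_{i+1}$ between $W_i$ and $W_{i+1}$ writes $A^\ast$ of the chain $W_1D_2W_2D_3\ldots$ in two ways:
\begin{itemize}
\item as $A^\ast(\Eq_{0.5}(\mathcal P))$ plus the sum of the oriented triangles $[W_i,D_{i+1},W_{i+1}]$;
\item as $A^\ast(\Css(\mathcal P))$ plus the sum of the oriented triangles $[D_i,W_i,D_{i+1}]$.
\end{itemize}
Since $W_i-D_{i+1}=(\tfrac12-\lambda_i)d_i$ and $W_{i+1}-D_{i+1}=(\tfrac12-\lambda_i)d_{i+1}$, each triangle $[W_i,D_{i+1},W_{i+1}]$ equals $(\frac12-\lambda_i)^2$ times a triangle with the fixed positive orientation of $[d_i,d_{i+1}]$. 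Similarly, each triangle $[D_i,W_i,D_{i+1}]$ is degenerate, since all three points lie on one diagonal. This yields the identity
\[
A^\ast(\Css(\mathcal P))-A^\ast(\Eq_{0.5}(\mathcal P))=-\sum_i\bigl(\tfrac12-\lambda_i\bigr)^2\tfrac12[d_i,d_{i+1}] ,
\]
which has the same sign as $A^\ast(\Eq_{0.5}(\mathcal P))\leqslant 0$. It therefore gives $\left|A^\ast(\Eq_{0.5})\right|\leqslant\left|A^\ast(\Css)\right|$, with equality iff all $\lambda_i=\frac12$, i.e.\ iff $\mathcal P$ is centrally symmetric. In that case both areas vanish, which also settles $0\leqslant\left|A^\ast(\Css)\right|$.

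For the upper bound I will express $A(\mathcal P)$ in the same coordinates. Then $A(\mathcal P)=\frac12\sum_{i=1}^n[d_i,d_{i+1}]\bigl(\lambda_i^2+(1-\lambda_i)^2\bigr)\cdot c$, decomposing $\mathcal P$ into the triangles $P_iD_iP_{i+1}$ and $P_{i+n}D_iP_{i+n+1}$, with $c$ a normalising constant to be fixed by computation. I will also rewrite $\left|A^\ast(\Css)\right|$ as $\frac12\sum(\tfrac12-\lambda_i)^2[d_i,d_{i+1}]$ plus the Wigner caustic term, now bounded by $\frac14 A(\mathcal P)$. Using $\lambda_i\in(0,1)$ termwise, $(\frac12-\lambda_i)^2<\lambda_i^2+(1-\lambda_i)^2-\frac14$-type comparisons should give the strict bound $<A(\mathcal P)$.

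The main obstacle is getting the constants in this last comparison right, so that the sum of the two contributions is strictly below $A(\mathcal P)$ and not merely below $\frac54A(\mathcal P)$. If the termwise estimate fails, I will instead bound the Wigner caustic term jointly, using the same triangles. Sharpness of $1$ will again come from the near-triangle family: there the $\lambda_i$ accumulate at $0$ or $1$, and the centre symmetry set of the truncated triangle nearly fills it, which I will verify by a direct limit computation.
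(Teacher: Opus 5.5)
Your Wigner-caustic half matches the paper: the identity $A(\mathcal Q)=A(\mathcal P)+2|A^*(\Eq_{0.5}(\mathcal P))|$ combined with Brunn--Minkowski and Rogers--Shephard, strictness because a $\cppos$ is never a triangle, and sharpness from truncated triangles. Your derivation of
\[
A^*(\Css(\mathcal P))-A^*(\Eq_{0.5}(\mathcal P))=-\frac12\sum_{i=1}^n\Bigl(\frac12-\lambda_i\Bigr)^2[d_i,d_{i+1}]
\]
is correct and takes a different route from the paper's summation by parts. You use the interleaved chain $W_1D_1W_2D_2\ldots W_nD_n$. Here I use the paper's indexing, in which $D_i$ lies on diagonals $i$ and $i+1$; your $D_{i+1}$ is the paper's $D_i$, which is harmless. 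In this chain the triangles $D_{i-1}W_iD_i$ are degenerate, and $W_iD_iW_{i+1}$ is a $(\frac12-\lambda_i)$-scaled copy of $d_i\,0\,d_{i+1}$. Since $d_i=-2Q_i$ and $\eta_i=1-2\lambda_i$, this is exactly the paper's identity $A^*(\Css)=A^*(\Eq_{0.5})-\frac12\sum\eta_i^2[Q_i,Q_{i+1}]$, obtained more geometrically. The comparison $|A^*(\Eq_{0.5})|\leqslant|A^*(\Css)|$ and its equality case then follow as you say.

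The gap is in the upper bound $|A^*(\Css(\mathcal P))|<A(\mathcal P)$. The triangles $P_iD_iP_{i+1}$ and $P_{i+n}D_iP_{i+n+1}$ do not tile $\mathcal P$: the apex $D_i$ changes with $i$, so the fans overlap. Their total area is indeed $\frac12\sum_{i=1}^n(\lambda_i^2+(1-\lambda_i)^2)[d_i,d_{i+1}]$. Write $\lambda^2+(1-\lambda)^2=\frac12+2(\frac12-\lambda)^2$, and use $\frac14\sum_{i=1}^n[d_i,d_{i+1}]=\sum_{i=1}^n[Q_i,Q_{i+1}]=A(\mathcal Q)$ together with your own identity. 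Then this total equals
\[
A(\mathcal Q)+2\bigl(|A^*(\Css(\mathcal P))|-|A^*(\Eq_{0.5}(\mathcal P))|\bigr)=A(\mathcal P)+2|A^*(\Css(\mathcal P))| .
\]
So no normalising constant $c$ exists. The ratio to $A(\mathcal P)$ is $1$ for centrally symmetric $\mathcal P$ and tends to $3$ along your truncated triangles, where $P_1D_1P_2$, $P_3D_3P_4$ and $P_5D_2P_6$ each nearly fill the triangle. If you calibrate $c=1$ on a symmetric example and run the termwise comparison, you get only $|A^*(\Css)|<\frac14A(\mathcal P)+\frac34\bigl(A(\mathcal P)+2|A^*(\Css)|\bigr)$, which says nothing. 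The problem is therefore a wrong formula, not a matter of constants.

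You do not need $A(\mathcal P)$ in these coordinates at all. From your identity and $(\frac12-\lambda_i)^2<\frac14$,
\[
|A^*(\Css(\mathcal P))|-|A^*(\Eq_{0.5}(\mathcal P))|<\frac18\sum_{i=1}^n[d_i,d_{i+1}]=\frac12A(\mathcal Q).
\]
Combined with $|A^*(\Eq_{0.5}(\mathcal P))|=\frac12(A(\mathcal Q)-A(\mathcal P))$, this gives $|A^*(\Css(\mathcal P))|<A(\mathcal Q)-\frac12A(\mathcal P)\leqslant A(\mathcal P)$ by Rogers--Shephard, which is the paper's argument.

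The same identity also replaces your planned ``direct limit computation'' for the sharpness of $1$. Along truncated triangles $T$ all $\lambda_i\to0$ or $1$, so $|A^*(\Css)|\to\frac14A(T)+\frac12\cdot\frac32A(T)=A(T)$. It also closes a point your $n>3$ construction leaves open. Inserting short parallel edges at opposite vertices creates new vertices of $\Css$ at intersections of almost coincident diagonals, so continuity of $A^*(\Css)$ is not automatic. In the identity, however, these new terms carry weights $[d_j,d_{j+1}]\to0$ with coefficients below $\frac14$, so they vanish in the limit.
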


\begin{proof}
Let \(K\subset\mathbb R^2\) be the convex body bounded by \(\mathcal P\). We use the previously established identity (see equation \eqref{eq:Q-area}):
\[
A^*(\Eq_{0.5}(\mathcal P))
=
\frac12
\left(
A(\mathcal P)
-
A\left(\frac{K-K}{2}\right)
\right)\leqslant 0.
\]

By the Rogers--Shephard inequality in the plane,
\begin{align}
\label{eq:RogersShephardIneq}
A\left(\frac{K-K}{2}\right)
\leqslant
\frac32A(\mathcal P),
\end{align}
with the equality case only for simplices. Therefore, in our case, the inequality in \eqref{eq:RogersShephardIneq} is strict.
Using the identity \eqref{eq:Q-area}, we get
\[
\left|A^*(\Eq_{0.5}(\mathcal P))\right|
=
\frac12
\left(
A\left(\frac{K-K}{2}\right)-A(\mathcal P)
\right)
<
\frac12
\left(
\frac32A(\mathcal P)-A(\mathcal P)
\right)=\frac{1}{4}A(\mathcal{P}),
\]
which ends the first part of the proof of the theorem.

Let us recall that
\[
D_i-P_i=\lambda_i d_i,
\]
where \(\lambda_i\in(0,1)\). Since
$d_i=P_{i+n}-P_i=-2Q_i,$
we have
\[
D_i
=
P_i+\lambda_i d_i
=
W_i+Q_i-2\lambda_i Q_i=W_i+(1-2\lambda_i)Q_i.
\]
For simplicity put 
$\eta_i=1-2\lambda_i.$
Then
$D_i=W_i+\eta_i Q_i$.
Consequently,
\[
D_i=P_{i+1}+\lambda_i d_{i+1}.
\]
Using
$P_{i+1}=W_{i+1}+Q_{i+1}$,
$d_{i+1}=-2Q_{i+1}$,
we also get
\[
D_i=W_{i+1}+\eta_i Q_{i+1}.
\]
Therefore, 
\begin{align}
\label{eq:ditwoformulas}
D_i=W_i+\eta_i Q_i
=
W_{i+1}+\eta_i Q_{i+1}.
\end{align}
In particular,
\begin{align}
\label{eq:wi1widiff}W_{i+1}-W_i
=
-\eta_i(Q_{i+1}-Q_i).
\end{align}

We now compute the difference between the oriented area of the centre symmetry set and the oriented area of the Wigner caustic.

First,
\[
A^*(\Eq_{0.5}(\mathcal P))
=
\frac12\sum_{i=1}^n[W_i,W_{i+1}]
=
\frac12\sum_{i=1}^n[W_i,W_{i+1}-W_i].
\]
Using \eqref{eq:wi1widiff},
we get
\begin{align}
\label{eq:astarWC} A^*(\Eq_{0.5}(\mathcal P))
=
-\frac12
\sum_{i=1}^n
\eta_i[W_i,Q_{i+1}-Q_i].
\end{align}

Now consider the centre symmetry set. We have
\[
A^*(\Css(\mathcal P))
=
\frac12\sum_{i=1}^n[D_i,D_{i+1}].
\]
Using
\eqref{eq:ditwoformulas}, we get
$D_{i+1}=W_{i+1}+\eta_{i+1}Q_{i+1}$
and
\[
D_{i+1}-D_i
=
(\eta_{i+1}-\eta_i)Q_{i+1}.
\]
Hence,
\begin{align*}
[D_i,D_{i+1}]
&=
[W_{i+1}+\eta_iQ_{i+1},(\eta_{i+1}-\eta_i)Q_{i+1}]
\\
&=
(\eta_{i+1}-\eta_i)[W_{i+1},Q_{i+1}]
+
\eta_i(\eta_{i+1}-\eta_i)[Q_{i+1},Q_{i+1}]\\
&=
(\eta_{i+1}-\eta_i)[W_{i+1},Q_{i+1}].
\end{align*}
Thus, 
\begin{align*}
A^*(\Css(\mathcal P))
=
\frac12
\sum_{i=1}^n
(\eta_{i+1}-\eta_i)[W_{i+1},Q_{i+1}]
=
\frac12
\sum_{i=1}^n
(\eta_i-\eta_{i-1})[W_i,Q_i].
\end{align*}

We now use summation by parts. Since the indices are cyclic,
\[
\sum_{i=1}^n(\eta_i-\eta_{i-1})[W_i,Q_i]
=
-\sum_{i=1}^n
\eta_i
\left(
[W_{i+1},Q_{i+1}]-[W_i,Q_i]
\right).
\]
Therefore, 
\begin{align}
\label{eq:cssareaformula1}
A^*(\Css(\mathcal P))
=
-\frac12
\sum_{i=1}^n
\eta_i
\left(
[W_{i+1},Q_{i+1}]-[W_i,Q_i]
\right).
\end{align}

Let us compute the expression in parentheses. From \eqref{eq:wi1widiff}
we get
\begin{align*}
[W_{i+1},Q_{i+1}]
&=
[W_i-\eta_i(Q_{i+1}-Q_i),Q_{i+1}]
=
[W_i,Q_{i+1}]
-
\eta_i[Q_{i+1}-Q_i,Q_{i+1}]\\
&=
[W_i,Q_{i+1}]
+
\eta_i[Q_i,Q_{i+1}].
\end{align*}
Thus, 
\[
[W_{i+1},Q_{i+1}]-[W_i,Q_i]
=
[W_i,Q_{i+1}-Q_i]
+
\eta_i[Q_i,Q_{i+1}].
\]
Substituting this into the formula for \(A^*(\Css(\mathcal P))\) (see \eqref{eq:cssareaformula1}), we obtain
\[
A^*(\Css(\mathcal P))
=
-\frac12
\sum_{i=1}^n
\eta_i[W_i,Q_{i+1}-Q_i]
-
\frac12
\sum_{i=1}^n
\eta_i^2[Q_i,Q_{i+1}].
\]
By the formula for the oriented area of the Wigner caustic (see equation \eqref{eq:astarWC}) we get that
\[
A^*(\Css(\mathcal P))
=
A^*(\Eq_{0.5}(\mathcal P))
-
\frac12
\sum_{i=1}^n
\eta_i^2[Q_i,Q_{i+1}].
\]
Since
$\eta_i=1-2\lambda_i$,
we finally obtain
\[
\left|A^*(\Css(\mathcal P))\right|
-
\left|A^*(\Eq_{0.5}(\mathcal P))\right|
=
\frac12
\sum_{i=1}^n
(1-2\lambda_i)^2[Q_i,Q_{i+1}]\geqslant 0,
\]
since the auxiliary polygon with vertices \(Q_i\) is positively oriented. Note that the equality in the above inequality holds if and only if $\lambda_i=0.5$ for all $i$ -- it means that $\mathcal{P}$ is centrally symmetric.

Finally, let us compare the absolute value of the oriented area of the centre symmetry set with the area of the original polygon.

Since \(0<\lambda_i<1\), we have
\[
\left|A^*(\Css(\mathcal P))\right|
-
\left|A^*(\Eq_{0.5}(\mathcal P))\right|
\leqslant
\frac12
\sum_{i=1}^n[Q_i,Q_{i+1}]=\frac14
\sum_{i=1}^{2n}[Q_i,Q_{i+1}]=\frac{1}{2}A(\mathcal{Q}),
\]
where \(\mathcal Q=\partial\left(\frac{K-K}{2}\right)\). 

On the other hand, from the area identity for the Wigner caustic (see equation~\eqref{eq:Q-area}),
\[
A^*(\Eq_{0.5}(\mathcal P))
=
\frac12\left(A(\mathcal P)-A(\mathcal Q)\right),
\]
we obtain
\[
\left|A^*(\Css(\mathcal P))\right|
\leqslant
\frac12\left(A(\mathcal Q)-A(\mathcal P)\right)
+
\frac12A(\mathcal Q)=
A(\mathcal Q)-\frac12A(\mathcal P).
\]

By the Rogers--Shephard inequality in the plane (see inequality \eqref{eq:RogersShephardIneq}), we get
\begin{align*}
\left|A^*(\Css(\mathcal P))\right|<A(\mathcal P).
\end{align*}

It remains to show that the constant \(\frac14\) and $1$ are sharp in the inequalities.

First, consider the case \(n=3\). Let $T$ be the following equilateral triangle of side length $\sqrt{3}$ with center in $(0,0)$:
\[
T=\operatorname{conv}\left\{\left(0,1\right),
\left(-\frac{\sqrt3}{2},-\frac12\right),
\left(\frac{\sqrt3}{2},-\frac12\right)\right\}.
\]
For \(0<\varepsilon<\frac{\sqrt{3}}{2}\), define the hexagon \(\mathcal P_\varepsilon\) by the vertices:
\begin{align*}
P_1&=\left(-\frac{\sqrt3}{2}+\varepsilon,-\frac12\right),&
P_2&=\left(\frac{\sqrt3}{2}-\varepsilon,-\frac12\right),&
P_3&=\left(\frac{\sqrt3}{2}-\frac{\varepsilon}{2},
-\frac12+\frac{\sqrt3\varepsilon}{2}\right),\\
P_4&=\left(\frac{\varepsilon}{2},
1-\frac{\sqrt3\varepsilon}{2}\right),&
P_5&=\left(-\frac{\varepsilon}{2},
1-\frac{\sqrt3\varepsilon}{2}\right),&
P_6&=\left(-\frac{\sqrt3}{2}+\frac{\varepsilon}{2},
-\frac12+\frac{\sqrt3\varepsilon}{2}\right).
\end{align*}
\begin{figure}[h!]
\centering
\includegraphics[width=0.45\linewidth]{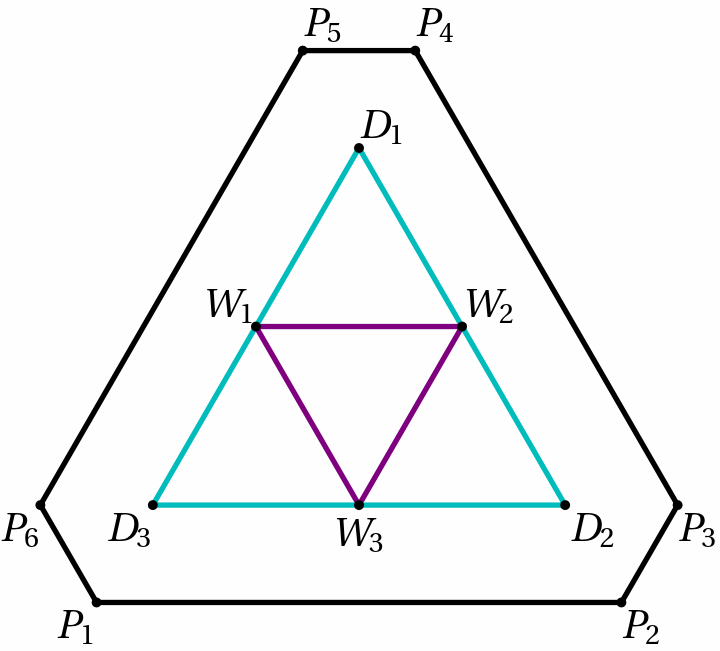}
\caption{Polygon $\mathcal{P}_\varepsilon$}
\label{fig:hexagonproof}
\end{figure}

This is triangle \(T\) with its three vertices cut off by segments parallel to the opposite sides (see Figure \ref{fig:hexagonproof}). Then, by direct calculations, one gets the following results for the Wigner caustic and the centre symmetry set of $\mathcal{P}_\varepsilon$: 
\[
\frac{
\left|A^*(\Eq_{0.5}(\mathcal P_\varepsilon))\right|}
{A(\mathcal P_\varepsilon)}
=
\frac{
\frac{3\sqrt{3}}{16}(1-\sqrt{3}\varepsilon)^2
}
{
\frac{3\sqrt{3}}{4}(1-\varepsilon^2)
}
=
\frac{(1-\sqrt{3}\varepsilon)^2}{4(1-\varepsilon^2)}
\text{ and }
\frac{
\left|A^*(\Css(\mathcal P_\varepsilon))\right|
}
{
A(\mathcal P_\varepsilon)
}
=\frac{(1-\sqrt{3}\varepsilon)^2}{1-\varepsilon^2}.
\]
Thus, the constants \(\frac{1}{4}\) and \(1\) are sharp for the Wigner caustic and the centre symmetry set in the case \(n=3\), respectively.

Now assume that $n>3$ and put
$r=n-3$.
We shall replace the two
opposite sides $P_2P_3$ and $P_5P_6$ by two polygonal arcs, each
consisting of $r+1$ edges.

Since the sides $P_2P_3$ and $P_5P_6$ are parallel and oppositely
oriented, there exists a unique number $\mu_\varepsilon>0$ such that
$P_6-P_5=-\mu_\varepsilon(P_3-P_2)$.

Define the negative homothety by the following equation
$$H_\varepsilon(x)
=
P_5-\mu_\varepsilon(x-P_2).$$
Then
$H_\varepsilon(P_2)=P_5$,
and
$H_\varepsilon(P_3)=P_6$.

For sufficiently small $\rho>0$, let $\gamma_\rho$ be a strictly
convex circular arc joining $P_2$ to $P_3$, lying on the exterior side
of the segment $P_2P_3$, and contained in \linebreak the~$\rho$-neighbourhood of
$P_2P_3$. Choose points
$Q_0^\rho,Q_1^\rho,\ldots,Q_{r+1}^\rho$
on $\gamma_\rho$, in their cyclic order, such that
$Q_0^\rho=P_2$,
$Q_{r+1}^\rho=P_3$ (see Figure \ref{fig:decagonproof}),
and, as $\rho\to 0$,
\[
Q_j^\rho
\to
Q_j^0
:=
\left(1-\frac{j}{r+1}\right)P_2
+
\frac{j}{r+1}P_3,
\qquad
j=0,\ldots,r+1.
\]

\begin{figure}[h!]
\centering
\includegraphics[width=0.444\linewidth]{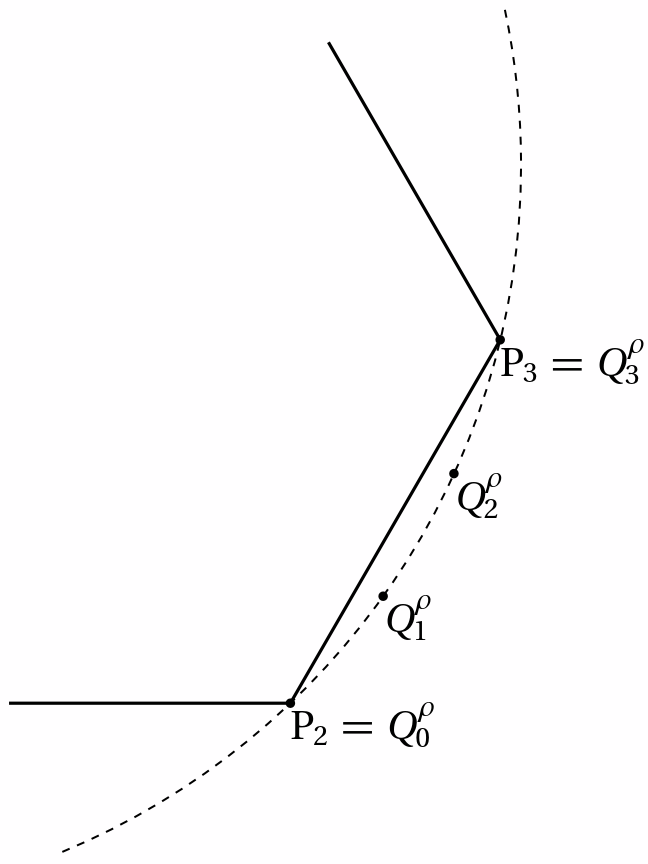}
\caption{A circular arc through $P_2$ and $P_3$ in $\mathcal{P}_{\varepsilon}$}
\label{fig:decagonproof}
\end{figure}

Define the corresponding points on the opposite arc by
$S_j^\rho
=
H_\varepsilon(Q_j^\rho)$ for 
$j=0,\ldots,r+1$.
Then
$S_0^\rho=P_5$,
$S_{r+1}^\rho=P_6$,
and
$S_{j+1}^\rho-S_j^\rho
=
-\mu_\varepsilon
\bigl(Q_{j+1}^\rho-Q_j^\rho\bigr)$ for
$j=0,\ldots,r$.
Therefore,
\[
Q_j^\rho Q_{j+1}^\rho
\parallel
S_j^\rho S_{j+1}^\rho,
\qquad
j=0,\ldots,r.
\]

Let $\mathcal P_{\varepsilon,\rho}^{(n)}$ be the polygon whose vertices,
in cyclic order, are
\[
P_1,\,
Q_0^\rho,Q_1^\rho,\ldots,Q_{r+1}^\rho,\,
P_4,\,
S_0^\rho,S_1^\rho,\ldots,S_{r+1}^\rho.
\]
It has
$1+(r+2)+1+(r+2)
=
2r+6
=
2n$
vertices and by the construction, its opposite sides are parallel. 
Moreover, for sufficiently small $\rho>0$, the directions of the
successive sides occur in the same cyclic order as the directions of
the sides of $\mathcal P_\varepsilon$. Hence,
$\mathcal P_{\varepsilon,\rho}^{(n)}$ is convex and therefore is a
$\cppos$. Furthermore, the two polygonal arcs converge to the original sides $P_2P_3$
and $P_5P_6$, respectively. Consequently,
$A\bigl(\mathcal P_{\varepsilon,\rho}^{(n)}\bigr)
\to
A(\mathcal P_\varepsilon)$,
$A^*\left(
\Eq_{0.5}\bigl(\mathcal P_{\varepsilon,\rho}^{(n)}\bigr)
\right)
\to
A^*\left(
\Eq_{0.5}(\mathcal P_\varepsilon)
\right)$,
and 
$A^*\left(
\Css\bigl(\mathcal P_{\varepsilon,\rho}^{(n)}\bigr)
\right)
\to
A^*\left(
\Css(\mathcal P_\varepsilon)
\right)$
as $\rho\to 0$.

Finally, let $\varepsilon_k\to0$. For each $k$, choose $\rho_k>0$
sufficiently small so that the two ratios corresponding to
$\mathcal P_{\varepsilon_k,\rho_k}^{(n)}$ differ from the corresponding
ratios for $\mathcal P_{\varepsilon_k}$ by less than $1/k$. Since
\[
\frac{
\left|
A^*\left(
\Eq_{0.5}(\mathcal P_{\varepsilon_k})
\right)
\right|
}{
A(\mathcal P_{\varepsilon_k})
}
\to
\frac14
\quad\text{and}\quad
\frac{
\left|
A^*\left(
\Css(\mathcal P_{\varepsilon_k})
\right)
\right|
}{
A(\mathcal P_{\varepsilon_k})
}
\to
1,
\]
we conclude that
\[
\frac{
\left|
A^*\left(
\Eq_{0.5}\bigl(\mathcal P_{\varepsilon_k,\rho_k}^{(n)}\bigr)
\right)
\right|
}{
A\bigl(\mathcal P_{\varepsilon_k,\rho_k}^{(n)}\bigr)
}
\to
\frac14
\quad\text{and}\quad
\frac{
\left|
A^*\left(
\Css\bigl(\mathcal P_{\varepsilon_k,\rho_k}^{(n)}\bigr)
\right)
\right|
}{
A\bigl(\mathcal P_{\varepsilon_k,\rho_k}^{(n)}\bigr)
}
\to
1.
\]
Hence, the constants $\frac14$ and $1$ are sharp for every $n\geqslant3$.
\end{proof}

As a corollary of Theorem \ref{thmMaxAreaWC}, we obtain the analogous result for smooth ovals and their Wigner caustics by approximating an oval by the converging sequence of $\cppos$es.

\begin{cor}[\cite{ZwierzMix}]
    Let $\mathcal{O}$ be an oval. Then
    $$\left|A^*(\Eq_{0.5}(\mathcal{O}))\right|\leqslant \frac{1}{4}A(\mathcal{O})\quad\text{and}\quad
0\leqslant\left|A^*(\Eq_{0.5}(\mathcal O))\right|\leqslant
\left|A^*(\Css(\mathcal{O}))\right|\leqslant A(\mathcal{O}).$$
\end{cor}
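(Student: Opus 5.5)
The plan is to obtain the statement for an oval $\mathcal O$ as a limit of Theorem~\ref{thmMaxAreaWC} applied to polygonal approximations. Fix a sequence of allowable sets of directions $\nu_k$ with $\operatorname{mesh}(\nu_k)\to0$. For instance, take $\nu_k$ to be $n_k$ equally spaced directions with $n_k\geqslant 3$ and $n_k\to\infty$, shifted slightly if necessary so that the relevant genericity holds. Put $\mathcal P_k:=\ppos_{\nu_k}(\mathcal O)$. By Remark~\ref{remConstruction} each $\mathcal P_k$ is a $\cppos$, since $\mathcal O$ is convex. Theorem~\ref{thmMaxAreaWC} then gives, for every $k$,
\[
\left|A^*(\Eq_{0.5}(\mathcal P_k))\right|<\tfrac14 A(\mathcal P_k),\qquad
\left|A^*(\Eq_{0.5}(\mathcal P_k))\right|\leqslant\left|A^*(\Css(\mathcal P_k))\right|<A(\mathcal P_k).
\]
Strict inequalities become non-strict in the limit, which is why the corollary states only non-strict bounds.

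The remaining work is to check that each quantity converges. First, $A(\mathcal P_k)\to A(\mathcal O)$: the circumscribed polygons $\mathcal P_k$ converge to $\mathcal O$ in the Hausdorff metric as the mesh tends to $0$, and area is continuous on convex bodies. Second, for the Wigner caustic, Theorem~2.12 gives $\Eq_{0.5}(\mathcal P_k)=\ppos_{\nu_k}(\Eq_{0.5}(\mathcal O))$. The remark preceding Theorem~\ref{thm:discreteImpIsoIneq}, which refines Theorem~\ref{thm:convergence}, then yields $A^*(\Eq_{0.5}(\mathcal P_k))\to A^*(\Eq_{0.5}(\mathcal O))$. Third, for the centre symmetry set, Proposition~\ref{prop:css-hausdorff-convergence} gives Hausdorff convergence $\Css(\mathcal P_k)\to\Css(\mathcal O)$, and the same remark asserts convergence of oriented areas for the centre symmetry set. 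Passing to the limit in the displayed inequalities gives all three claimed bounds. The lower bound $0\leqslant|A^*(\Eq_{0.5}(\mathcal O))|$ is trivial.

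The main obstacle is the genericity hypothesis. Theorem~\ref{thm:convergence}, Proposition~\ref{prop:css-hausdorff-convergence} and the area remark are stated for generic ovals, i.e.\ when $\Eq_{0.5}(\mathcal O)$ and $\Css(\mathcal O)$ have only ordinary cusps. For a generic $\mathcal O$ the argument above applies directly. For an arbitrary oval I would approximate $\mathcal O$ in the $C^\infty$ topology by generic ovals $\mathcal O_j$ and apply the generic case to each. I would then pass to the limit once more, using that $A(\mathcal O_j)$, $A^*(\Eq_{0.5}(\mathcal O_j))$ and $A^*(\Css(\mathcal O_j))$ depend continuously on the curve. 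For the first two this is clear from the integral formulas in the support-function parametrisation, since $\Eq_{0.5}$ is given by $\frac12(\gamma(\theta)+\gamma(\theta+\pi))$. For $\Css$ I would argue via its envelope parametrisation, which depends smoothly on $\mathcal O$ away from the degenerate situation where consecutive chords coincide. If that continuity is troublesome, it suffices to state the corollary for generic ovals, as the paper's convergence results do.
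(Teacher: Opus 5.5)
Your proposal is correct and follows the paper's route. The paper justifies the corollary in a single sentence: it approximates $\mathcal O$ by the $\cppos$es $\ppos_{\nu_k}(\mathcal O)$, applies Theorem~\ref{thmMaxAreaWC}, and passes to the limit using Theorem~2.12, the convergence results of Section~2 and the remark on convergence of $A^\ast$, which is your argument in more detail. Your additional reduction from arbitrary to generic ovals goes beyond what the paper writes, and your hedge about $\Css$ can be dropped: for a positively curved oval the centre symmetry set is parametrised by $D(\theta)=\frac{\rho(\theta+\pi)\gamma(\theta)+\rho(\theta)\gamma(\theta+\pi)}{\rho(\theta)+\rho(\theta+\pi)}$, where $\rho$ is the radius of curvature and $\gamma(\theta)$ the point with normal angle $\theta$, so $A^\ast(\Css(\mathcal O))$ depends continuously on the curve in the $C^3$ topology, with no degenerate case.
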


\section*{Acknowledgements}

\noindent The authors would like to thank Wojciech Domitrz for valuable discussions. 
They are also grateful to Marcos Craizer for introducing them to the topic of convex polygons with parallel opposite sides, and to Przemys\l aw G\'orka for posing the~question whether the~original set can be recovered from its associated singular sets. 
This question provided additional motivation for the present work.

\bibliographystyle{amsalpha}

\end{document}